\numberwithin{equation}{section}  
\newtheorem{punkt}{}[section]
\theoremstyle{plain}
\newtheorem{lemma}[punkt]{Lemma}
\newtheorem{proposition}[punkt]{Proposition}
\newtheorem{theorem}[punkt]{Theorem}
\theoremstyle{definition}
\newtheorem{remarks}[punkt]{Remarks}
\theoremstyle{plain}
\newtheorem*{corollary*}{Corollary}
\newtheorem*{lemma*}{Lemma}
\newtheorem*{proposition*}{Proposition}
\newtheorem*{theorem*}{Theorem}
\theoremstyle{definition}
\newtheorem*{remark*}{Remark}
\newtheorem*{remarks*}{Remarks}
\newtheorem*{example*}{Example}
\newtheorem*{examples*}{Examples}
\newtheorem*{definition*}{Definition}
\newtheorem*{conjecture*}{Conjecture}
\newtheorem*{assumption*}{Assumption}
\newtheorem*{assumptions*}{Assumptions}
\newtheorem*{construction*}{Construction}
\def\mynat{\mathbb{N}}
\def\myreal{\mathbb{R}}
\def\myo{\mathcal{O}}
\def\ee{\mathbb{E}}
\def\pp{\mathbb{P}}
\def\eg{e.g.\@\xspace}
\def\ie{i.e.\@\xspace}
\def\iid{i.i.d.\@\xspace}
\def\wrt{w.r.t.\@\xspace}
\def\eskip{\mskip24mu}
\def\myparagraph#1{\paragraph{\textbf{#1.}}}
\begin{document}

\title{The Entropic Erd{\H{o}}s-Kac Limit Theorem}

\author{S.~G.~Bobkov$^{1,4}$, G.~P.~Chistyakov$^{2,4}$, and H.~K\"osters$^{3,4}$}%

\subjclass[2010]{60F05 (60E10, 94A15)}
\keywords{Entropy, entropic distance, Erd{\H{o}}s-Kac limit theorem.}

\begin{abstract}
We prove entropic and total variation versions of the Erd{\H{o}}s-Kac limit theorem
for the maximum of the partial sums of \iid random variables with densities.
\end{abstract}

\maketitle

\footnotetext{1) School of Mathematics, University of Minnesota, USA; \\ Email: bobkov@math.umn.edu.}
\footnotetext{2) Department of Mathematics, Bielefeld University, Germany; \\ Email: chistyak@math.uni-bielefeld.de.}
\footnotetext{3) Department of Mathematics, Bielefeld University, Germany; \\ Email: hkoesters@math.uni-bielefeld.de.}
\footnotetext{4) Research supported by CRC 701. \\ S.B. was also supported by DMS-1106530 and Simons Fellowship.}

\markboth{Entropic Limit Theorem}{Entropic Limit Theorem}

\section{Introduction}

Let $\{X_n\}_{n \geq 1}$ be independent identically distributed (\iid) 
random variables with mean $\ee X_1 = 0$ and variance $\ee X_1^2 = 1$. 
Put
$$
S_n := \sum_{k=1}^{n} X_k, \qquad
\overline{S}_n := \max_{k=1,\hdots,n} S_k, \qquad n \in \mynat.
$$
Throughout  we denote by $Z$ a standard normal random variable
with its density $\varphi(x) := \tfrac{1}{\sqrt{2\pi}}\, e^{-x^2/2}$
and use the symbol $\Rightarrow$ to denote convergence in distribution.

The classical central limit theorem states that
\begin{align}
\label{eq:clt}
S_n/\sqrt{n} \Rightarrow Z \qquad \text{as $n \to \infty$.}
\end{align}
In 1986 Barron \cite{Ba} established an~entropic~version of this result,
the so-called \emph{entropic central limit theorem}.
To formulate it, first let us introduce some notation.
Let $Y$ be a random variable with density $\psi$, and let $X$ be 
a random variable whose distribution is absolutely continuous with 
respect to that of $Y$. The relative entropy of $X$ with respect 
to $Y$ is~defined~by
\begin{align}
\label{eq:defent}
D(X \,|\, Y) := \int_{\{\psi(x) > 0\}} 
L\left(\frac{p(x)}{\psi(x)}\right) \psi(x) \, dx \,,
\end{align}
where $p$ is a density of $X$,
$L(x) := x \log x$ for $x > 0$ and $L(x) := 0$ for $x = 0$.
In~case the distribution of $X$ is not absolutely continuous
with respect to that of~$Y$, put $D(X \,|\, Y) := \infty$.
Then the entropic central limit theorem by Barron states that
\begin{align}
\label{eq:eclt}
D(S_n/\sqrt{n} \,|\, Z) \to 0 \qquad \text{as $n \to \infty$}
\end{align}
if and only if $D(S_{n_0} \,|\, Z) < \infty$ for some $n_0 \in \mynat$.
\pagebreak[1]
This result is motivated, inter alia, by the distinguished property
of the standard normal distribution that it~maximizes (Shannon) entropy.

Barron's result has sparked much further research on entropic limit 
theorems. For instance, there are several publications devoted to the 
rate of convergence, see 
Artstein, Ball, Barthe and Naor \cite{ABBN:2004b},
Johnson and Barron \cite{Johnson-Barron:2004}, 
Johnson \cite{Johnson:2004}, and 
Bobkov, Chistyakov and G\"otze \cite{BCG:2011b}.
Entropic limit theorems have also been derived for certain non-normal 
limit distributions within the class of stable laws, cf.
\cite{Johnson:2004}, \cite{BCG:2011c}, \cite{KHJ:2005}.

All these limit distributions arise in connection with sums of 
\iid random variables and are therefore infinitely divisible.
Our aim is to investigate a different situation,
namely for the maxima of sums of \iid summands,
with a limit distribution that is not infinitely divisible.
\linebreak[2]
Here the analogue of the classical central~limit theorem
is given by the Erd{\H{o}}s-Kac limit theorem \cite{EK},
which states that
\begin{align}
\label{thm:erdos-kac}
\overline{S}_n / \sqrt{n} \Rightarrow |Z| \qquad \text{as $n \to \infty$.}
\end{align}
The distribution of $|Z|$, which has density
$\varphi_+(x) := \sqrt{\frac{2}{\pi}}\, e^{-x^2/2}\,\pmb{1}_{(0,\infty)}(x)$,
is com\-mon\-ly called the one-sided (or reflected) standard normal law.
As explained below, this distribution plays a similar role 
to the normal distribution in that it~maximizes entropy 
among all \emph{positive} random variables with fixed second moment. 
It is therefore quite natural to ask whether the Erd{\H{o}}s-Kac limit 
theorem \cite{EK} also admits an~entropic formulation.

To state a corresponding assertion, we introduce more notation.
Given a random variable $X$ such that $\pp(X > 0) > 0$, 
let $\widetilde{X}$ have the same distribution as $X$ conditioned to be positive, 
\ie $\pp(\widetilde{X} \in A) = \pp(X \in A| X>0)$ for Borel sets $A$ on the real line.
Then the relative entropy of $X$ conditioned to be positive 
with respect to a positive random variable $Y$ with density $\psi$ 
is defined by
\begin{align}
\label{eq:defentplus}
D_+(X \,|\, Y) := D(\widetilde{X} \,|\, Y). 
\end{align}
In the sequel, $Y$ will always be given by $|Z|$ or some scalar multiple 
of it. Our main result is as~follows:

\begin{theorem}
\label{thm:ECLT}
Suppose that $X_1,X_2,\hdots$ are \iid random variables
with a density, mean zero and variance one. Then
\begin{align}
\label{eq:ECLT-1}
D_+(\overline{S}_n/\sqrt{n}\,|\,|Z|) \to 0 
\qquad \text{as \ $n \to \infty$}
\end{align}
if and only if
\begin{align}
\label{eq:ECLT-2}
D_+(X_1 \,|\, |Z|) < \infty \,.
\end{align}
\end{theorem}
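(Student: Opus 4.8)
\emph{Plan.} Since $D_+\geq 0$, proving \eqref{eq:ECLT-1} amounts to the upper bound $\limsup_n D_+(\overline{S}_n/\sqrt n\,|\,|Z|)\leq 0$. Write $p$ for the density of $X_1$; then $\overline{S}_n$ has a density $p_n$, and $q_n(x):=\sqrt n\,p_n(\sqrt n\,x)$ is the density of $\overline{S}_n/\sqrt n$. Putting $c_n:=\pp(\overline{S}_n>0)\to 1$ (by the Erd\H{o}s--Kac theorem \eqref{thm:erdos-kac}) and using $\varphi_+=2\varphi$ on $(0,\infty)$, a short computation gives
\[
D_+(\overline{S}_n/\sqrt n\,|\,|Z|)=\frac{1}{c_n}\int_0^\infty q_n\log\frac{q_n}{\varphi}\,dx-\log(2c_n).
\]
As $\log(2c_n)\to\log 2=\int_0^\infty\varphi_+\log(\varphi_+/\varphi)\,dx$, it remains to show $\limsup_n\int_0^\infty q_n\log(q_n/\varphi)\,dx\leq\log 2$. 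The guiding observation is that \eqref{eq:ECLT-2} constrains $p$ only on $(0,\infty)$, so the argument must use that $\overline{S}_n$ is built from the ascending ladder structure of the walk.

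\emph{Necessity.} If \eqref{eq:ECLT-1} holds, then $D_+(\overline{S}_n/\sqrt n\,|\,|Z|)<\infty$ for some $n$; since $\ee(\overline{S}_n/\sqrt n)^2\leq 4$ by Doob's $L^2$ maximal inequality, this is equivalent to $\int_{\{x>0:p_n(x)>1\}}p_n\log p_n<\infty$. Splitting the law of $\overline{S}_n$ along the event $\{\overline{S}_n=S_1\}=\{X_2\leq 0,\,X_2+X_3\leq 0,\dots,\,X_2+\cdots+X_n\leq 0\}$, which is independent of $X_1=S_1$ and has probability $\alpha_n\geq\pp(X_1<0)^{n-1}>0$, gives the pointwise bound $p_n\geq\alpha_n\,p$ on $(0,\infty)$. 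Since $t\log t$ is increasing on $[1/e,\infty)$, on $\{x>0:p(x)>e/\alpha_n\}$ one has $\alpha_n\,p\log p\leq p_n\log p_n+\alpha_n|\log\alpha_n|\,p$; integrating, and absorbing the remaining contributions (which are finite because $\ee X_1^2=1$), yields $\int_0^\infty p\log(p/\varphi)\,dx<\infty$, i.e.\ \eqref{eq:ECLT-2}.

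\emph{Sufficiency.} Assume \eqref{eq:ECLT-2}, fix $0<\delta<K$, and split $\int_0^\infty q_n\log(q_n/\varphi)=\int_0^\delta+\int_\delta^K+\int_K^\infty$. On $[\delta,K]$, a uniform local Erd\H{o}s--Kac limit theorem ($q_n\to\varphi_+$ uniformly) makes the middle term tend to $\int_\delta^K\varphi_+\log(\varphi_+/\varphi)\leq\log 2$. On $[K,\infty)$, writing $q_n\log(q_n/\varphi)=q_n\log q_n+q_n(\tfrac{x^2}{2}+\log\sqrt{2\pi})$: the second summand integrates to at most $\tfrac12\ee[(\overline{S}_n/\sqrt n)^2;\,\overline{S}_n>K\sqrt n]+\log\sqrt{2\pi}\;\pp(\overline{S}_n>K\sqrt n)=:\varepsilon_K(n)$, and $\sup_n\varepsilon_K(n)\to 0$ as $K\to\infty$ by uniform integrability of $\{(\overline{S}_n/\sqrt n)^2\}$ (Doob's inequality together with uniform integrability of $S_n^2/n$) and tightness; the first summand is $\leq 0$ once $K$ is large, because the one--big--jump bound $q_n(x)\leq Cx^{-2}$ (a consequence of $\ee X_1^2=1$) forces $q_n<1$ there. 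Near the origin, the ladder decomposition $\overline{S}_n=H_1+\cdots+H_{\nu_n}$ on $\{\overline{S}_n>0\}$ --- where the $H_i$ are the i.i.d.\ strict ascending ladder heights with common density $h_1$, and $\nu_n$ is the number of ladder epochs $\leq n$ --- together with the elementary fact that the walk fails to break its running maximum over the remaining steps with probability at most $1$, gives the $n$-free pointwise bound $p_n\leq u$ on $(0,\infty)$, where $u=\sum_{\ell\geq 1}h_1^{*\ell}$ is the renewal density of the $H_i$; refining this to $q_n(x)\asymp u(\sqrt n\,x)$ for small $x$ bounds $\int_0^\delta q_n\log^+ q_n$ by $O(\delta)+\tfrac{1}{\sqrt n}\int_0^A u\log^+ u$ for a fixed $A$. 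Letting $n\to\infty$, then $\delta\to 0$ and $K\to\infty$, yields $\limsup_n\int_0^\infty q_n\log(q_n/\varphi)\leq\log 2$, hence \eqref{eq:ECLT-1}.

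\emph{Main obstacle.} Two steps carry the difficulty. The first is the uniform local Erd\H{o}s--Kac limit theorem, with sufficient control of $q_n$ near $0$ (the estimate $q_n(x)\asymp u(\sqrt n\,x)$) and in the tails; I expect this from the Wiener--Hopf factorization and characteristic-function, Edgeworth-type estimates for the walk confined to a half-line --- the ladder representation reducing it to Barron's entropic central limit theorem for the sums $H_1+\cdots+H_\ell$ and a local limit theorem for the renewal count $\nu_n$ --- possibly after a preliminary smoothing that makes $p$ bounded. The second, where hypothesis \eqref{eq:ECLT-2} actually enters, is the finiteness $\int_0^A u\log^+ u<\infty$, equivalently $h(H_1)>-\infty$, even though $H_1=S_{\sigma_1}$ is the walk evaluated at the random time $\sigma_1=\inf\{k:S_k>0\}$; I would prove it by writing $h_1=\pp(\sigma_1=1)\,\widetilde p_+ + \sum_{k\geq 2}\pp(\sigma_1=k)\,s_k$, with $\widetilde p_+$ the normalized restriction of $p$ to $(0,\infty)$ (of finite entropy by \eqref{eq:ECLT-2} and $\ee X_1^2=1$) and $s_k$ the normalized sub-density of $S_k$ on the first-crossing event, then using convexity of $t\log t$ and the inequality $H_1\leq X_{\sigma_1}$ --- the crossing increment dominates the ladder height --- to control the local behaviour of each $s_k$ by the positive tail of $p$. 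The remaining ingredients (the Erd\H{o}s--Kac theorem, Donsker-type moment convergence, Doob's inequality) are classical.
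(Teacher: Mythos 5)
Your necessity argument is correct and is in substance the same as the paper's, just reached by a more elementary route: the event $\{\overline{S}_n=S_1\}=\{X_2\le 0,\ldots,X_2+\cdots+X_n\le 0\}$ has probability $\alpha_n=\overline{F}_{n-1}(0)$ and is independent of $X_1$, so $\overline{p}_n\ge \alpha_n p$ on $(0,\infty)$. The paper obtains the same decomposition $\overline{p}_n=\overline{F}_{n-1}(0)\,p+\tilde p_n$ on $(0,\infty)$ via Spitzer's formula and then invokes \eqref{eq:finiteness}; your pointwise comparison with $t\log t$ is a perfectly good replacement.

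The sufficiency argument is where there is a genuine gap, and it is the gap you yourself flag as the ``main obstacle'', so let me be concrete about why your sketch does not close it. First, the pointwise tail bound $q_n(x)\le Cx^{-2}$ is not a consequence of $\ee X_1^2=1$ and is in fact false for unbounded $p$. By the lower bound $\overline{p}_n\ge\overline{F}_{n-1}(0)\,p$, one has $q_n(x)\ge c\,p(\sqrt n\,x)$; choosing $p$ with spikes at $M_j\to\infty$ of height $H_j\to\infty$ and width $\sim H_j^{-2}$ (compatible with $\ee X_1^2<\infty$ once $\sum M_j^2/H_j<\infty$, e.g.\ $M_j=2^j,\ H_j=M_j^4$) makes $q_n$ unbounded in the region $x>K$ for infinitely many $n$, so the term $\int_K^\infty q_n\log q_n$ is not sign-controlled. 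The paper's Proposition \ref{prop:intlimit} avoids densities entirely and instead controls $\int_K^\infty x^2\overline{p}_n^*\,dx$ via the second derivative of the characteristic function; if you want a density-free argument, the uniform integrability route you mention (the paper's remark after Proposition \ref{prop:charfunc}, via Spitzer's formula) is the right one, but it only controls $\int x^2 q_n$, not $\int q_n\log^+ q_n$ in the tail; the latter still needs a density estimate on $(K,\infty)$. Second, the ``uniform local Erd\H{o}s--Kac limit theorem'' you invoke on $[\delta,K]$ cannot hold for unbounded $p$: the paper's Proposition \ref{prop:local}(a) produces an error $r_n$ whose $L^\infty$ norm is $\myo(1)$, not $o(1)$, precisely because the term $\overline{F}_{n-1}(0)\sqrt n\,p(\sqrt n\,x)$ (the image of the first-passage event) need not vanish pointwise. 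The resolution is not uniform convergence but the pair of bounds $\|r_n\|_1=\myo(n^{-1/2})$, $\|r_n\|_\infty=\myo(1)$, fed into the entropy integral via $\int|r_n|^2\le\|r_n\|_1\|r_n\|_\infty$. Third, the entire scheme needs a bounded surrogate for $\overline{p}_n^*$ to make the Fourier inversion and pointwise estimates legitimate; the paper does this by the binomial decomposition $p=(1-\varrho)q_1+\varrho q_2$ with $q_1$ bounded, producing $\overline{q}_n$ (bounded) plus a remainder whose relative entropy is controlled via Lemma \ref{lemma:posneg} and the entropy power inequality --- and it is exactly here that hypothesis \eqref{eq:ECLT-2} enters the sufficiency proof, via the finiteness of $D(q_{2,+}\,|\,\varphi_+)$. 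Your ``preliminary smoothing'' remark gestures at this but the interaction with the relative entropy functional (why the unbounded piece has negligible entropy) is precisely the content of Lemma \ref{lemma:entropy-q} and is not supplied. Finally, the renewal bound $p_n\le u$ and the refinement $q_n(x)\asymp u(\sqrt n\,x)$ near zero: the first inequality is fine, but the second is itself a local limit theorem for $\overline{S}_n$ at fixed spatial scale (essentially the $k$-small part of the paper's Proposition \ref{prop:local}(b)), and it is not established. In short, your plan correctly identifies the three regimes and the role of \eqref{eq:ECLT-2}, but the Fourier-analytic/binomial-decomposition machinery of Sections 3--7 is not a dispensable detail --- it is what makes the middle and tail estimates true for the class of densities the theorem allows.
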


In fact, the assumption that the $X_j$ have a density is only for 
convenience and could be omitted. Note, however, that 
$\eqref{eq:ECLT-2}$ implies that $X_1$ has a density on the positive 
half-line.

\pagebreak[2]
Let us recall that the relative entropy represents
a rather strong measure of deviation of distributions.
Indeed, by the Pinsker-Csisz\'ar-Kullback inequality, 
\begin{align}
\label{eq:pinsker}
D(X \,|\, Z) \geq \tfrac12 \, (d_{TV}(X,Z))^2 \,,
\end{align}
where $d_{TV}(X,Z)$ denotes the total variation distance
between the distributions of $X$ and $Z$
(cf. \cite{Pinsker:1964,Csiszar:1967,Kullback:1967,Fedotov-Harremoes-Topsoe:2003}).
Thus, \eqref{eq:eclt} implies $d_{TV}(S_n/\sqrt{n},Z) \to 0$
as~$n \to \infty$, and hence \eqref{eq:clt}.
Similarly, \eqref{eq:ECLT-1} implies 
$d_{TV}(\overline{S}_n/\sqrt{n},|Z|) \to 0$, 
and hence \eqref{thm:erdos-kac}.
This follows from \eqref{eq:pinsker} in combination with
the well-known fact that, under our moment assumptions,
\begin{align}
\label{eq:osn}
\pp(\overline{S}_n \leq 0) = \myo(n^{-1/2})
\end{align}
(cf. \eg \cite[pp.\,414f]{Feller-2}).
In fact, for convergence in total variation distance, condition \eqref{eq:ECLT-2}
is not needed, since we have:

\begin{theorem}
\label{thm:totalvariation}
Suppose that $X_1,X_2,\hdots$ are \iid random variables with 
a density, mean zero and variance one. Then
$$
d_{TV}(\overline{S}_n / \sqrt{n}, |Z|) \to 0 
\qquad \text{as \ $n \to \infty$.}
$$
\end{theorem}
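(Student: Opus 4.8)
Here is how I would attack Theorem~\ref{thm:totalvariation}.

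The plan is to upgrade the weak Erd{\H o}s--Kac convergence \eqref{thm:erdos-kac} to an $L^1$-convergence of densities, i.e.\ a local limit theorem for $\overline S_n$. First I would pass from $\overline S_n$ to the reflected maximum $M_n := \overline S_n^{\,+} = \max(0,S_1,\dots,S_n)$: since $\{M_n \ne \overline S_n\} \subseteq \{\overline S_n \le 0\}$ has probability $\myo(n^{-1/2})$ by \eqref{eq:osn}, we get $d_{TV}(\overline S_n/\sqrt n, M_n/\sqrt n) \le \pp(\overline S_n \le 0) \to 0$, so it suffices to treat $M_n$. Because $X_1$ has a density, each $S_k$ ($k\ge 1$) has one, $M_n$ is absolutely continuous on $(0,\infty)$, and $\pp(M_n = 0) = \pp(\overline S_n \le 0) = \myo(n^{-1/2})$ is the mass of its only atom. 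Writing $g_n$ for the density of $M_n$ on $(0,\infty)$, Scheff\'e's lemma then reduces Theorem~\ref{thm:totalvariation} to the pointwise statement
\[
\sqrt n\, g_n\bigl(\sqrt n\,\xi\bigr) \longrightarrow \varphi_+(\xi) \qquad (\xi > 0),
\]
together with $\int_0^\infty g_n \to 1$, which is just $\pp(M_n = 0) \to 0$.

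To prove this local limit theorem I would use the argmax/duality decomposition of the walk. Let $\tau$ be the first index at which $\max(0,S_1,\dots,S_n)$ is attained; reversing the path on $[0,\tau]$ and using the independence of the increments before and after $\tau$ yields the exact identity
\[
g_n(x) = \sum_{k=1}^{n} \bar\rho_{\,n-k}\, r_k(x), \qquad x > 0,
\]
where $\bar\rho_j := \pp(S_1 \le 0,\dots,S_j \le 0)$ (with $\bar\rho_0 := 1$) and $r_k$ is the sub-density of $S_k$ restricted to the event $\{S_1 > 0,\dots,S_k > 0\}$, of total mass $\rho_k := \pp(S_1 > 0,\dots,S_k > 0)$. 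The inputs needed are: (a) the Sparre--Andersen asymptotics $\bar\rho_j \sim C_-\, j^{-1/2}$ and $\rho_k \sim C_+\, k^{-1/2}$ with $C_- C_+ = 1/\pi$, which hold here because $\sum_m m^{-1}(\pp(S_m \le 0) - \tfrac12)$ converges under the mean-zero/finite-variance assumption (the constraint $C_-C_+=1/\pi$ is in fact forced by the identity $\sum_{k=0}^{n}\bar\rho_{n-k}\rho_k = 1$); and (b) a \emph{local} limit theorem for the walk conditioned to stay positive: uniformly for $x$ of order $\sqrt k$,
\[
r_k(x) = \rho_k \cdot \frac{x}{k}\, e^{-x^2/(2k)}\,\bigl(1 + o(1)\bigr),
\]
the limiting factor being the density of the endpoint of the Brownian meander. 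Plugging (a) and (b) into the sum with $x = \sqrt n\,\xi$ and $k = \lfloor n s\rfloor$ turns it into a Riemann sum converging to
\[
\frac{\xi}{\pi} \int_0^1 \frac{e^{-\xi^2/(2s)}}{\sqrt{1-s}\; s^{3/2}}\, ds ,
\]
and the substitution $s = 1/(1+u)$ evaluates this integral as $\sqrt{2/\pi}\; e^{-\xi^2/2} = \varphi_+(\xi)$, which is exactly the claim. (Alternatively one could work with the characteristic function of $M_n$ through Spitzer's formula, the high-frequency decay again being supplied by the density assumption, but that computation is less transparent.)

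The hard part is ingredient (b). Weak convergence of the maximum is elementary --- it is the Erd{\H o}s--Kac theorem --- but pointwise convergence of the \emph{density} genuinely requires a local limit theorem for the conditioned walk, valid uniformly over the relevant range of $x$ and $k$; this is the one step where the hypothesis that $X_1$ has a density (or merely an absolutely continuous component) is essential, and it cannot be obtained by soft arguments (e.g.\ weak convergence plus smoothing at a vanishing scale does \emph{not} force convergence of densities). Here I would either invoke the known local limit theorems for random walks conditioned to stay positive (Iglehart, Bolthausen, Caravenna, Vatutin--Wachtel, Doney) or establish the required uniform version directly, and would in addition need routine but careful estimates for the contributions of very small $k$ and of $k$ close to $n$ (where $n-k$ is small and (a) does not apply) in order to justify passing from the sum to the integral; the atom at $0$ and the mass defect of $g_n$ are, as noted, absorbed by the $\myo(n^{-1/2})$ bound \eqref{eq:osn}.
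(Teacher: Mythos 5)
Your plan is a genuinely different route from the paper's.  The paper writes the density of $\overline{S}_n$ via Nagaev's Fourier identity \eqref{eq:nagaev}, splits the input density into a bounded part plus a geometrically small remainder (the binomial decomposition of Section~3), proves a uniform local limit theorem for the bounded part (Proposition~\ref{prop:local}(a)) by direct Fourier inversion, and then reads off total-variation convergence in a few lines.  You instead propose the pathwise argmax/duality decomposition $g_n(x)=\sum_{k=1}^n \bar\rho_{n-k}\,r_k(x)$, plug in Sparre--Andersen asymptotics and a local limit theorem for the walk conditioned to stay positive, and pass to a Riemann sum.  (Your decomposition is in fact the density version of \eqref{eq:spitzer-2}/\eqref{eq:opn-p1}, which the paper also uses, but only for the necessity direction.)  The integral identity and the constant bookkeeping $C_-C_+=1/\pi$ you give are correct.

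There is, however, a gap in the reduction, and it is precisely the point the paper's binomial decomposition was built to handle.  Your Scheff\'e step requires a.e.\ \emph{pointwise} convergence of the rescaled densities, but the $k=1$ term of your decomposition already gives $\sqrt n\,g_n(\sqrt n\,\xi)\ge \sqrt n\,\bar\rho_{n-1}\,r_1(\sqrt n\,\xi)\sim C_-\,p(\sqrt n\,\xi)$.  Theorem~\ref{thm:totalvariation} assumes only that $X_1$ has a density, so $p$ may be unbounded, and $p(\sqrt n\,\xi)\to 0$ a.e.\ is not a consequence of $p\in L^1$; you have not established it, and it is not a ``routine estimate'' for small $k$ nor a Riemann-sum bookkeeping issue, as your closing paragraph suggests.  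The small-$k$ contribution \emph{is} small in $L^1$ (of total mass $\myo(K/\sqrt n)$ for $k\le K$), so the right fix is to split the sum and apply Scheff\'e only to the tail $k>K$ while bounding the head in $L^1$ --- but this is a nontrivial restructuring that you would still need to spell out, and it lands you back on something morally equivalent to the paper's bounded/unbounded split.  Relatedly, the conditioned-walk local limit theorem you invoke as ingredient (b) is the true work horse here, and the known versions in the literature are typically stated either for distribution functions or under boundedness/aperiodicity hypotheses; a uniform density version valid for arbitrary densities (the setting of this theorem) is exactly what the paper proves from scratch in Sections~5--7 via Proposition~\ref{prop:local}, so ``invoke known LLTs'' hides most of the difficulty.
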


\medskip
As already mentioned, both the centered and the one-sided normal 
distribution play a special role from the viewpoint of information 
theory. Let us recall that for a random variable $X$ with density $p$, 
the entropy (also called Shannon entropy or differential entropy) 
is defined by
$$
h(X) := - \int_{-\infty}^{\infty} L(p(x)) \, dx \,,
$$
where $L$ is as in \eqref{eq:defent}.
If $\ee X^2 = \sigma^2$ is finite, then the entropy is well-defined, and
$$
h(\sigma Z) - h(X) = D(X \,|\, \sigma Z) \geq 0 \, \qquad
\big(Z \sim N(0,1)\big),
$$
with equality if and only if $X$ and $\sigma Z$ have the same 
distribution. Thus, the centered normal distribution with second 
moment $\sigma^2$ maximizes entropy among all probability measures 
with the same second moment. Moreover, in
$$
D(X \,|\, \tau Z) = 
- h(X) + \tfrac12 \log(2\pi\tau^2) + \tfrac12 \sigma^2 / \tau^2 
\qquad (\tau > 0)
$$
the right-hand side is minimized for $\tau = \sigma$, so 
$D(X \,|\, \sigma Z)$ may be interpreted as a measure of deviation 
of the distribution of $X$ from the class of all centered normal 
distributions. Similarly, for a positive random variable $X$
with finite second~moment $\ee X^2 = \sigma^2$,
$$
h(\sigma |Z|) - h(X) = D_+(X \,|\, \sigma |Z|) \geq 0 \,,
$$
with equality if and only if $X$ and $\sigma |Z|$ have the same 
distribution. Hence, the one-sided normal distribution with second 
moment $\sigma^2$ maximizes entropy among all probability measures 
on the positive half-line with the same second moment.
Also, in
$$
D_+(X \,|\, \tau |Z|) = 
- h(X) + \tfrac12 \log (\tfrac12\pi\tau^2) + \tfrac12 \sigma^2/\tau^2
\qquad (\tau > 0)
$$
the right-hand side is minimized for $\tau = \sigma$. Therefore, 
as above, $D_+(X \,|\, \sigma |Z|)$ may be interpreted as a measure 
of deviation of the distribution of~$X$
from the class of all one-sided normal distributions.

In this respect, note that
\begin{align}
\label{eq:second-moment}
\ee(\overline{S}_n^+/\sqrt{n})^2 = 1 + o(1) \qquad \text{as $n \to \infty$,}
\end{align}
see \eg Section~6 below.
Combining \eqref{eq:osn} and \eqref{eq:second-moment}, 
it is easy to see that for~large~$n$,
$\overline{S}_n / \sqrt{n}$ conditioned to~be positive 
has second moment approximately equal to $1$, 
so that the comparison to $|Z|$ in~\eqref{eq:ECLT-1} 
is natural.

\pagebreak[3]

Finally, let us emphasize the following curious difference between 
the entropic central limit theorem and our Theorem
\ref{thm:ECLT}. Even if $X_1$ itself has density, Barron's 
characterization uses the finiteness of $D(S_{n_0} \,|\, Z)$
for some $n_0 \in \mynat$ (which may be any natural number);
see \cite{Ba} for an example requiring $n_0 > 1$.
In contrast to that, our characterization uses $n_0 = 1$ at once.
More precisely, it~follows from our proof that
$D_+(\overline{S}_{n_0} \,|\, |Z|) < \infty$ for some ${n_0} \in \mynat$
if and only if this is true for $n_0 = 1$.

In the proof of (1.3) given in \cite{Ba},
entropy convolution inequalities for sums of independent random variables 
play a major role. In our analysis for the maxima of sums,
these inequalities still play a role in the proof of Theorem~\ref{thm:ECLT}, 
but they have less far-reaching consequences.
To control the density of the maximum,
we~use more classical methods and results based on Fourier analysis,
see Nagaev \cite{Nagaev:69,Nagaev:70} and Aleshkyavichene \cite{Alesh:77}.
This approach does not only lead to proofs of entropic limit theorems
(cf.\@\xspace \cite{BCG:2011c}),
but in principle, similarly as in \cite{BCG:2011b},
it~should also lead to results on the (exact) rate of convergence.
Apparently, such refined results cannot be obtained
by using known information-theoretic tools.

A major ingredient in our proof will be the local limit theorem 
for maxima of sums of \iid random variables from \cite{Alesh:77},
see~also \cite{Alesh:73,Nagaev-Eppel:76,Wachtel:2012} for related results.
To~obtain \eqref{eq:ECLT-1} under minimal conditions,
we~need to extend the result from \cite{Alesh:77}
from bounded to unbounded densities
(see Proposition \ref{prop:local}).

Let us introduce some conventions for the rest of the paper.
We assume that the random variables $X_j$ are \iid and have 
a density, mean $0$ and variance $1$.
Unless otherwise indicated, we write $p$ for their density,
$F$ for their distribution function and $f$ for their characteristic 
function. Moreover, let $p_n,F_n,f_n$ and
$\overline{p}_n,\overline{F}_n,\overline{f}_n$
denote the corresponding functions for the random variables $S_n$ and 
$\overline{S}_n$.
We write $p_n^*$ and $\overline{p}_n^*$ for the densities of 
the rescaled random variables 
$S_n/\sqrt{n}$ and $\overline{S}_n/\sqrt{n}$.

For a real number $x$, 
set $x^+ := \max\{x,0\}$ and $x^- := \max\{-x,0\}$. 
Unless otherwise indicated, $\myo$-bounds and $o$-bounds
refer to the case where $n \to \infty$ and hold uniformly 
in $x$ (in the region under consideration).
Finally, $C_1,C_2,\dots$ denote positive~constants
which may depend on the dis\-tribution of the $X_j$
and which may~change from step to step.

The paper is organized as~follows.
Section~2 contains some preliminary remarks on relative entropy.
Section~3--7 are devoted to the proof 
of the sufficiency part of Theorem \ref{thm:ECLT},
while the necessity part of Theorem \ref{thm:ECLT}
is proved in Section~8.
Section~9 contains the proof of Theorem \ref{thm:totalvariation}.

\pagebreak[2]
\medskip
\section{Some Remarks on Relative Entropy}

Throughout this section, let $\psi$ be a positive 
probability density on the positive half-line.
Given a non-negative measurable function $f$ on the real line,
set 
\begin{align}
\label{eq:relentdef}
D(f \,|\, \psi) := \int_0^\infty L\bigg(\frac{f(x)}{\psi(x)}\bigg) \, \psi(x) \, dx \,,
\end{align}
where $L(x)$ is the function defined in the introduction.
By abuse of terminology, we~will call $D(f \,|\, \psi)$ 
\emph{relative entropy} 
even when $f$ is not a probability density on the positive half-line.
Note that in this special case, we have $D(f \,|\, \psi) \geq 0$ 
by Jensen's inequality. \linebreak[2]
If $f$ is an arbitrary non-negative measurable function, this need not be~true anymore, 
but we have at~least $D(f \,|\, \psi) \geq \min \{ L(x) : x \geq 0 \} = -e^{-1}$.

Let us collect some basic properties of relative entropy which will be used later. 
(Some of the proofs are straightforward, which is why we omit them.)

\begin{lemma}
\label{lemma:scalar}
Suppose that $\alpha$ is a positive real number 
and $f$ is a non-negative measurable function
with $\int_0^\infty f(x) \, dx < \infty$.
Then
\[
D(\alpha f \,|\, \psi) = \alpha D(f \,|\, \psi) + L(\alpha) \int_0^\infty f(x) \, dx \,.
\]
\end{lemma}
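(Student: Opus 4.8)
The plan is to reduce the claimed formula to a pointwise algebraic identity for the function $L$ and then integrate. The starting observation is that for every $\alpha > 0$ and every $t \geq 0$,
\[
L(\alpha t) = \alpha\, L(t) + t\, L(\alpha).
\]
Indeed, for $t = 0$ both sides vanish, and for $t > 0$ this is just $\alpha t \log(\alpha t) = \alpha t (\log t + \log \alpha) = \alpha\,(t\log t) + t\,(\alpha \log \alpha)$.

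First I would apply this identity with $t = f(x)/\psi(x)$, which is legitimate since $\psi > 0$ on $(0,\infty)$, multiply through by $\psi(x)$, and use $t\,\psi(x) = f(x)$ to obtain the pointwise relation
\[
L\!\left(\frac{\alpha f(x)}{\psi(x)}\right) \psi(x)
= \alpha\, L\!\left(\frac{f(x)}{\psi(x)}\right)\psi(x) + L(\alpha)\, f(x), \qquad x > 0.
\]
Integrating over $(0,\infty)$ then yields $D(\alpha f \,|\, \psi) = \alpha\, D(f \,|\, \psi) + L(\alpha) \int_0^\infty f(x)\,dx$, which is precisely the assertion.

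The only point requiring care — and the main obstacle, such as it is — is justifying that the integral of the left-hand side equals the sum of the integrals of the two terms on the right, since $D(f \,|\, \psi)$ may be infinite. Here I would invoke the remark preceding the lemma: because $L(s) \geq -e^{-1}$ for all $s \geq 0$, both $L(\alpha f/\psi)\,\psi$ and $L(f/\psi)\,\psi$ are bounded below by the integrable function $-e^{-1}\psi$, so their integrals are well-defined in $(-\infty, +\infty]$; and the term $L(\alpha)\, f$ is genuinely integrable, since $\int_0^\infty f(x)\,dx < \infty$. Hence the integrals may be added without ambiguity: if $D(f \,|\, \psi) = +\infty$ then both sides of the claimed equality equal $+\infty$, while otherwise all three integrals are finite and the displayed pointwise identity integrates term by term, giving the result.
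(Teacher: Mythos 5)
Your argument is correct and is precisely the ``straightforward'' proof the paper declines to write out: the pointwise identity $L(\alpha t)=\alpha L(t)+tL(\alpha)$ applied with $t=f(x)/\psi(x)$, multiplied by $\psi(x)$, and integrated. Your extra care about additivity of the integrals (using the lower bound $L\geq -e^{-1}$ and $\int_0^\infty f<\infty$) is exactly the right point to check and is handled correctly.
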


\begin{lemma}
\label{lemma:combination}
Suppose that $\alpha_1,\hdots,\alpha_n$ are positive real numbers
and $f_1,\hdots,f_n$ are non-negative measurable functions
with $\int_0^\infty f_k(x) \, dx < \infty$, $k=1,\hdots,n$.
Then
\[
D\left(\left.\sum_{k=1}^{n} \alpha_k f_k \,\right|\, \psi\right) 
\leq 
\sum_{k=1}^{n} \alpha_k D(f_k \,|\, \psi) 
+
\left(\log \sum_{k=1}^{n} \alpha_k\right) \, \sum_{k=1}^{n} \alpha_k \int_0^\infty f_k(x) \, dx \,.
\]
\end{lemma}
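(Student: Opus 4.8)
The plan is to reduce the statement to the convexity of $L(x) = x\log x$ combined with Lemma~\ref{lemma:scalar}. First I would dispose of the trivial case: if $D(f_k \,|\, \psi) = +\infty$ for some $k$, then, since each $D(f_j \,|\, \psi) \geq -e^{-1}$ as noted above, the right-hand side equals $+\infty$ and there is nothing to prove. So from now on assume $D(f_k \,|\, \psi) < \infty$ for every $k$.

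Next, set $\beta := \sum_{k=1}^n \alpha_k$ and $\lambda_k := \alpha_k/\beta$, so that $\sum_k \lambda_k = 1$, and put $g := \sum_k \lambda_k f_k$; note that $\int_0^\infty g(x)\,dx = \sum_k \lambda_k \int_0^\infty f_k(x)\,dx < \infty$. Since $\sum_k \alpha_k f_k = \beta g$, Lemma~\ref{lemma:scalar} gives
\[
D\Bigl(\sum_{k=1}^n \alpha_k f_k \,\Big|\, \psi\Bigr) = \beta\, D(g \,|\, \psi) + L(\beta) \int_0^\infty g(x)\,dx \,,
\]
and here $L(\beta)\int_0^\infty g(x)\,dx = \beta\log\beta \cdot \tfrac1\beta \sum_k \alpha_k \int_0^\infty f_k(x)\,dx = \bigl(\log\sum_k\alpha_k\bigr)\sum_k \alpha_k \int_0^\infty f_k(x)\,dx$, which is precisely the second term on the right-hand side of the asserted inequality. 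It therefore suffices to prove the convexity-type bound $D(g\,|\,\psi) \leq \sum_k \lambda_k D(f_k\,|\,\psi)$, since multiplying by $\beta$ then produces the first term.

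For this last bound, write $r_k(x) := f_k(x)/\psi(x)$ for $x > 0$. As $\sum_k \lambda_k = 1$ and $L$ is convex on $[0,\infty)$, we have pointwise
\[
L\bigl(g(x)/\psi(x)\bigr) = L\Bigl(\sum_k \lambda_k r_k(x)\Bigr) \leq \sum_k \lambda_k L\bigl(r_k(x)\bigr) \,.
\]
Multiplying by $\psi(x) > 0$ and integrating over $(0,\infty)$ yields $D(g\,|\,\psi) \leq \sum_k \lambda_k D(f_k\,|\,\psi)$, provided the integrals are legitimate. This is the only point requiring care: each function $x \mapsto L(r_k(x))\,\psi(x)$ is bounded below by the integrable function $-e^{-1}\psi(x)$ and has finite integral $D(f_k\,|\,\psi)$, hence is integrable; consequently $\sum_k \lambda_k L(r_k(\cdot))\,\psi(\cdot)$ is integrable with integral $\sum_k \lambda_k D(f_k\,|\,\psi)$, while $L(g(\cdot)/\psi(\cdot))\,\psi(\cdot)$ lies between $-e^{-1}\psi(\cdot)$ and this integrable majorant, so it too is integrable and its integral $D(g\,|\,\psi)$ satisfies the desired estimate. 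Beyond this routine integrability bookkeeping — which, together with pointwise convexity, is really all that is involved — there is no genuine obstacle.
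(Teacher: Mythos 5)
Your proof is correct: normalizing via Lemma~\ref{lemma:scalar}, reducing to $D(g\,|\,\psi) \leq \sum_k \lambda_k D(f_k\,|\,\psi)$ for the convex combination $g = \sum_k \lambda_k f_k$, and deducing this from the pointwise convexity of $L$ together with the integrability bookkeeping (using $L \geq -e^{-1}$) is exactly the natural argument. The paper omits the proof of this lemma as ``straightforward,'' and your route is consistent with how the authors use Lemmas~\ref{lemma:scalar} and~\ref{lemma:combination} together in the proof of Lemma~\ref{lemma:convexity}, so there is nothing to compare against and nothing to add.
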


\pagebreak[2]

\begin{lemma}
\label{lemma:posneg}
Suppose that $\psi$ is decreasing on the positive half-line
and that $f$~and~$g$ are probability densities 
on $(0,+\infty)$ and $(-\infty,0)$, respectively.
Then
$$
D(f \ast g \,|\, \psi) \leq D(f \,|\, \psi) + e^{-1} \,.
$$
\end{lemma}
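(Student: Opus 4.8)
The plan is to write $f\ast g$, restricted to the positive half-line, as a continuous mixture of left-translates of $f$, use the convexity of $D(\,\cdot\,|\,\psi)$ in its first argument to reduce to estimating each translate separately, and then exploit the monotonicity of $\psi$.

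First I would note that, since $g$ is supported on $(-\infty,0)$, for $x>0$ we have $(f\ast g)(x)=\int_{-\infty}^{0} f_v(x)\,g(v)\,dv$ with $f_v(x):=f(x-v)$, and $\int_0^\infty f_v(x)\,dx=\int_{-v}^\infty f(y)\,dy\le 1$ for each $v<0$. For fixed $x>0$ the function $t\mapsto L(t/\psi(x))\,\psi(x)=t\log t-t\log\psi(x)$ is convex, so $D(\,\cdot\,|\,\psi)$ is convex on non-negative functions. Applying Jensen's inequality with respect to the probability measure $g(v)\,dv$ (pointwise in $x$, then integrating in $x$; this is legitimate since the relevant integrands are bounded below by $-e^{-1}\psi(x)g(v)$, which is integrable) gives
\[
D(f\ast g\,|\,\psi)\;\le\;\int_{-\infty}^{0} D(f_v\,|\,\psi)\,g(v)\,dv .
\]
Equivalently, one can discretize $g$ and apply Lemma~\ref{lemma:combination} with weights summing to at most $1$, so that the logarithmic term there is $\le 0$, and then pass to the limit.

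Next I would estimate $D(f_v\,|\,\psi)$ for fixed $v<0$. The substitution $y=x-v$ gives
\[
D(f_v\,|\,\psi)=\int_{-v}^{\infty} L\!\left(\frac{f(y)}{\psi(y+v)}\right)\psi(y+v)\,dy .
\]
For $y>-v$ we have $0<y+v<y$, hence $\psi(y+v)\ge\psi(y)>0$ because $\psi$ is decreasing, and the pointwise computation
\[
L\!\left(\tfrac{f(y)}{\psi(y+v)}\right)\psi(y+v)-L\!\left(\tfrac{f(y)}{\psi(y)}\right)\psi(y)
= f(y)\bigl(\log\psi(y)-\log\psi(y+v)\bigr)\le 0
\]
shows that the integrand is dominated by $L(f(y)/\psi(y))\,\psi(y)$. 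Hence
\[
D(f_v\,|\,\psi)\le\int_{-v}^{\infty} L\!\left(\frac{f(y)}{\psi(y)}\right)\psi(y)\,dy
= D(f\,|\,\psi)-\int_{0}^{-v} L\!\left(\frac{f(y)}{\psi(y)}\right)\psi(y)\,dy
\le D(f\,|\,\psi)+e^{-1},
\]
the last inequality using $L\ge -e^{-1}$ and $\int_0^{-v}\psi\le 1$. Substituting into the mixture bound and using $\int_{-\infty}^0 g=1$ yields $D(f\ast g\,|\,\psi)\le D(f\,|\,\psi)+e^{-1}$.

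The only genuinely delicate point is the first step — justifying the reduction of the convolution to an average of translates — since the integrands are not sign-definite. This is handled either by the lower bound $D(\,\cdot\,|\,\psi)\ge -e^{-1}$ (which lets one reduce freely to the case $D(f\,|\,\psi)<\infty$, in which all integrability concerns vanish) or by the discretization via Lemma~\ref{lemma:combination}. Everything else is the one-line pointwise inequality coming from $\psi$ being decreasing, together with the elementary estimate $L\ge -e^{-1}$ for the boundary term.
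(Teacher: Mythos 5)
Your argument is correct and follows essentially the same route as the paper's: Jensen's inequality (via the probability density $g$) to reduce the convolution to an average of left-translates of $f$, then the monotonicity of $\psi$ to compare $\psi(x)$ with $\psi(x-y)$, and finally the pointwise bound $L \ge -e^{-1}$ to control the boundary term $\int_0^{-v}$. The extra care you take about sign-indefiniteness of the integrand (and the alternative via Lemma~\ref{lemma:combination}) is a reasonable addition, but the substance of the proof is identical.
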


\begin{proof}[Proof of Lemma \ref{lemma:posneg}] 
Since $L$ is a convex~function and $g$ is a probability density
on $(-\infty,0)$, it follows from Jensen's inequality that
$$
L\left(\int_{-\infty}^{0} h(y) \, g(y) \, dy \right) 
\leq
\int_{-\infty}^{0} L(h(y)) \, g(y) \, dy 
$$
for any non-negative measurable function $h$.
We therefore obtain
\begin{align*}
   D(f \ast g | \psi)
&= \int_0^\infty L\left( \int_{-\infty}^{0} \frac{f(x-y)}{\psi(x)} \, g(y) \, dy \right) \, \psi(x) \, dx \\
&\leq \int_0^\infty \int_{-\infty}^{0} L\left( \frac{f(x-y)}{\psi(x)} \right) \, g(y) \, dy \, \psi(x) \, dx \\
&= \int_{-\infty}^{0} \int_0^\infty f(x-y) \log\left( \frac{f(x-y)}{\psi(x)} \right) \, dx \, g(y) \, dy \,.
\end{align*}
Since $\psi(x)$ is decreasing in $x$, we have, for any $y < 0$,
\begin{multline*}
  \int_0^\infty f(x-y) \log\left( \frac{f(x-y)}{\psi(x)} \right) \, dx
\leq \int_0^\infty f(x-y) \log\left( \frac{f(x-y)}{\psi(x-y)} \right) \, dx  
\\
= \int_{0}^\infty L\left( \frac{f(u)}{\psi(u)} \right) \, \psi(u) \, du - \int_{0}^{-y} L\left( \frac{f(u)}{\psi(u)} \right) \, \psi(u) \, du
\leq D(f | \psi) + e^{-1} \,.
\end{multline*}
Combining these estimates, we get
$$
D(f \ast g \,|\, \psi) \leq D(f \,|\, \psi) + e^{-1} \,,
$$
and the lemma is proved.
\end{proof}

\begin{lemma}
\label{lemma:convexity}
Suppose that $f$ and $g$ are non-negative measurable functions
with $\alpha := \int_0^\infty f(x) \, dx < \infty$
and $\beta := \int_0^\infty g(x) \, dx < \infty$.
Then
$$
D(f \,|\, \psi) + D(g \,|\, \psi) \leq D(f + g \,|\, \psi) \leq D(f \,|\, \psi) + D(g \,|\, \psi) + L(\alpha\!+\!\beta) - L(\alpha) - L(\beta) \,.
$$
\end{lemma}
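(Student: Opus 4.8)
Both inequalities reduce to pointwise statements about the integrands, combined with the scaling identity of Lemma~\ref{lemma:scalar}.

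I would first treat the lower bound $D(f\,|\,\psi)+D(g\,|\,\psi)\le D(f+g\,|\,\psi)$. Writing $a:=f(x)/\psi(x)$ and $b:=g(x)/\psi(x)$, this amounts to the superadditivity of $L$ on $[0,\infty)$, namely $L(a)+L(b)\le L(a+b)$. Since $\log$ is nondecreasing, $L(a+b)=a\log(a+b)+b\log(a+b)\ge a\log a+b\log b$ for $a,b>0$, while the cases $a=0$ or $b=0$ follow from $L(0)=0$. Integrating this pointwise inequality against $\psi(x)\,dx$ gives the bound; here one only uses that $L\ge -e^{-1}$, so every relative entropy occurring is bounded below and no $\infty-\infty$ ambiguity arises (and if $D(f+g\,|\,\psi)=+\infty$ there is nothing to prove).

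For the upper bound I would first dispose of the degenerate cases: if $\alpha=0$ then $f=0$ \ae on $(0,\infty)$ and the whole statement collapses to $D(g\,|\,\psi)\le D(g\,|\,\psi)\le D(g\,|\,\psi)$, and symmetrically if $\beta=0$. Assuming $\alpha,\beta>0$, put $\widehat f:=f/\alpha$ and $\widehat g:=g/\beta$; these are probability densities on $(0,\infty)$, and with $\lambda:=\alpha/(\alpha+\beta)$ the function $\tfrac{f+g}{\alpha+\beta}=\lambda\widehat f+(1-\lambda)\widehat g$ is again a probability density. Applying Lemma~\ref{lemma:scalar} to $f=\alpha\widehat f$, $g=\beta\widehat g$ and $f+g=(\alpha+\beta)\bigl(\tfrac{f+g}{\alpha+\beta}\bigr)$ and cancelling the resulting $L$-terms, the claimed inequality becomes equivalent to
\[
D\bigl(\lambda\widehat f+(1-\lambda)\widehat g \,\big|\, \psi\bigr)\ \le\ \lambda\,D(\widehat f\,|\,\psi)+(1-\lambda)\,D(\widehat g\,|\,\psi),
\]
that is, to the convexity of $h\mapsto D(h\,|\,\psi)$ on probability densities. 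This convexity is immediate: for each fixed $x$ the map $t\mapsto L\bigl(t/\psi(x)\bigr)\,\psi(x)=t\log t-t\log\psi(x)$ is convex on $[0,\infty)$, so integrating the pointwise inequality over $(0,\infty)$ gives the display. (Equivalently, the display is the log-sum inequality applied to $f$ and $g$.)

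The only genuinely delicate point --- the ``main obstacle'', such as it is --- is bookkeeping with infinite values: one notes $L(a)\le L(a+b)+e^{-1}$ (using $L(b)\ge -e^{-1}$), so finiteness of $D(f+g\,|\,\psi)$ forces finiteness of $D(f\,|\,\psi)$ and $D(g\,|\,\psi)$, which legitimises the rearrangements above, while if $D(f\,|\,\psi)$ or $D(g\,|\,\psi)$ equals $+\infty$ the upper bound holds trivially.
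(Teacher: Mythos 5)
Your argument is correct and follows essentially the same route as the paper: the paper also obtains the lower bound from the superadditivity $L(x+y)\ge L(x)+L(y)$, and obtains the upper bound by scaling with Lemma~\ref{lemma:scalar} and then invoking the $n=2$, unit-weight case of Lemma~\ref{lemma:combination} (which is exactly your convexity of $h\mapsto D(h\,|\,\psi)$). Your extra care with the degenerate cases $\alpha=0$ or $\beta=0$ and with possible $+\infty$ values is a sound refinement of the paper's terser ``w.l.o.g.\ $\alpha,\beta>0$''.
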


\begin{proof}
Suppose w.l.o.g. that $\alpha,\beta > 0$.
On the one hand, by Lemmas \ref{lemma:scalar}~and~\ref{lemma:combination}, we~have
\begin{align*}
   D(f+g | \psi)
&= (\alpha\!+\!\beta) \, D(\tfrac{\alpha}{\alpha+\beta} \tfrac{f}{\alpha}+\tfrac{\beta}{\alpha+\beta} \tfrac{g}{\beta} | \psi) + L(\alpha+\beta) \\
&\leq (\alpha\!+\!\beta) \left[ \tfrac{\alpha}{\alpha+\beta} D(\tfrac{f}{\alpha}|\psi) + \tfrac{\beta}{\alpha+\beta} D(\tfrac{g}{\beta}|\psi) \right] + L(\alpha+\beta) \\
&= \alpha D(\tfrac{f}{\alpha}|\psi) + \beta D(\tfrac{g}{\beta}|\psi) + L(\alpha+\beta) \\
&= D(f|\psi) + D(g|\psi) - L(\alpha) - L(\beta) + L(\alpha+\beta) \,.
\end{align*}
On the other hand, it is straightforward to check that
$L(x+y) \geq L(x) + L(y)$ for~any $x,y \geq 0$, whence
$
D(f+g | \psi) \geq D(f | \psi) + D(g | \psi).
$
\end{proof}

\pagebreak[2]

In particular, it follows from Lemmas \ref{lemma:scalar} and \ref{lemma:convexity} 
that for any non-negative measurable functions $f,g$ with
$\int_0^\infty f(x) \, dx < \infty$, $\int_0^\infty g(x) \, dx < \infty$
and any $\alpha,\beta > 0$, we~have
\begin{align}
\label{eq:finiteness}
D(\alpha f+\beta g \,|\, \psi) < \infty
\quad\text{if and only if}\quad
D(f \,|\, \psi) < \infty \text{ and } D(g \,|\, \psi) < \infty \,.
\end{align}

\pagebreak[2]

\begin{lemma}
\label{lemma:perturb}
Suppose that $(f_n)$ and $(g_n)$ are sequences of non-negative measurable functions
such that 
$\int_0^\infty f_n(x) \, dx = 1 + o(1)$
and
$\int_0^\infty g_n(x) \, dx = o(1)$
as $n \to \infty$. Then 
$$
D(f_n+g_n \,|\, \psi) = D(f_n \,|\, \psi) + D(g_n \,|\, \psi) + o(1) \qquad \text{as $n \to \infty$} \,.
$$
\end{lemma}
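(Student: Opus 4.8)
The plan is to obtain the statement as an immediate consequence of the two-sided estimate in Lemma~\ref{lemma:convexity}, so that the only genuine work left is a short continuity argument for the function $L$.

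First I would set $\alpha_n := \int_0^\infty f_n(x)\,dx$ and $\beta_n := \int_0^\infty g_n(x)\,dx$, so that $\alpha_n = 1+o(1)$ and $\beta_n = o(1)$ by hypothesis. I would dispose of the degenerate case $\beta_n = 0$ at once: then $g_n = 0$ a.e.\ on $(0,\infty)$, so $D(g_n\,|\,\psi) = 0$ and $D(f_n+g_n\,|\,\psi) = D(f_n\,|\,\psi)$, and the claimed identity holds for that index; the same applies if $\alpha_n = 0$, which in any case happens only for finitely many $n$. So we may assume $\alpha_n,\beta_n > 0$, and Lemma~\ref{lemma:convexity} gives
$$
0 \;\leq\; D(f_n+g_n\,|\,\psi) - D(f_n\,|\,\psi) - D(g_n\,|\,\psi) \;\leq\; L(\alpha_n+\beta_n) - L(\alpha_n) - L(\beta_n) \,,
$$
where the left-hand inequality is legitimate because $D(\,\cdot\,|\,\psi) \geq -e^{-1} > -\infty$, so that $D(f_n\,|\,\psi) + D(g_n\,|\,\psi)$ never involves $-\infty$ and the subtraction makes sense (in the finite case).

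Next I would observe that the right-hand side is $o(1)$. Since $L(x) = x\log x$ extends continuously to $[0,\infty)$ with $L(0) = 0 = \lim_{x\to 0^+} x\log x$, and $\alpha_n \to 1$, $\beta_n \to 0$, $\alpha_n+\beta_n \to 1$, each of $L(\alpha_n+\beta_n)$, $L(\alpha_n)$, $L(\beta_n)$ converges to $0$ (the first two by continuity of $L$ at $1$, the last by continuity at $0$). Hence the upper bound tends to $0$, and together with the lower bound this yields $D(f_n+g_n\,|\,\psi) - D(f_n\,|\,\psi) - D(g_n\,|\,\psi) \to 0$, which is exactly the assertion whenever all three entropies are finite.

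Finally I would address the case where $D(f_n\,|\,\psi) = +\infty$ (or $D(g_n\,|\,\psi) = +\infty$) along some subsequence: there the lower bound in Lemma~\ref{lemma:convexity}, combined once more with $D(\,\cdot\,|\,\psi) \geq -e^{-1}$, forces $D(f_n+g_n\,|\,\psi) = +\infty$, while $D(f_n\,|\,\psi) + D(g_n\,|\,\psi) = +\infty$ as well, so the identity holds in the extended sense. I do not expect any real obstacle in this proof: the whole argument rests on Lemma~\ref{lemma:convexity} plus continuity of $L$ at $1$ and at $0$, and the only mildly delicate points are these boundary situations and the harmless reduction to $\alpha_n,\beta_n > 0$.
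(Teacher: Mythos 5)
Your argument is exactly what the paper intends: the paper's proof is a one-line appeal to Lemma~\ref{lemma:convexity}, and your write-up simply spells out the continuity of $L$ at $0$ and $1$ plus the harmless boundary cases. Correct and the same approach.
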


\begin{proof}
This is an immediate consequence of Lemma \ref{lemma:convexity}.
\end{proof}

In the following sections, $\psi$ will always be given by the probability density
$\varphi_+(x) := \sqrt{\frac{2}{\pi}} \, e^{-x^2/2}$ (\mbox{$x > 0$})
or its rescaled version
$\varphi_{n,+}(x) := \sqrt{\frac{2}{\pi n}} \, e^{-x^2/2n}$ (\mbox{$x > 0$}),
where $n \in \{ 1,2,3,\hdots \}$. 
Note that $\varphi_{n,+}$ is the density 
of the one-sided normal distribution with second moment $n$.
It is easy to check that for any~non-negative measurable function $f$, we have
\begin{align}
\label{eq:scalinginvariance}
D(\sqrt{n} f(\sqrt{n} \,\cdot\,) \,|\, \varphi_+) = D(f \,|\, \varphi_{n,+}) \,.
\end{align}

\pagebreak[2]
\medskip
\section{Binomial Decomposition}

In this section we start with the proof of sufficiency
in Theorem \ref{thm:ECLT}.
In~the~sequel, by a signed density we mean any measurable function
$h(x)$ defined on the real line or on the positive half-line
such that
$
\int_{-\infty}^\infty |h(x)|\,dx < \infty.
$
Since it is more convenient to work with bounded densities,
we use a \emph{binomial decomposition} of the density $p$
to write the density $\overline{p}_n^*$ 
(restricted to the positive half-line)
as the sum of two signed~densities,
a bounded term $\overline{q}_n^*$ and a remainder term $\overline{r}_n^*$.
This~representation will play an important role in the proof 
\pagebreak[2] of the sufficiency part of Theorem \ref{thm:ECLT}.
Let us remark that binomial decompositions are a well-known tool 
in the investigation of the classical central limit theorem, 
see \eg \cite{Sirazdinov-Mamatov:1962,Ibragimov-Linnik:1965}.
In~connection with entropic central limit theorems,
they have recently been used in \cite{BCG:2011b,BCG:2011c}.

\medskip

Recall that $p$ is the density of $X_1$.
Write 
\begin{align}
\label{eq:decomposition-p0}
p = (1-\varrho) q_1 + \varrho q_2 \,,
\end{align}
where $q_1$ is a bounded probability density
with $\int_0^\infty q_1(x) \, dx > 0$,
$q_2$ is a potentially unbounded probability density,
and $0 \leq \varrho < \tfrac12$.
It follows that for any $n \geq 1$,
\begin{multline}
  p_n(x) 
= p^{\ast n}(x) 
= \left( \sum_{k=1}^{n} \tbinom{n}{k} (1-\varrho)^k \varrho^{n-k} \, (q_1^{\ast k} \ast q_2^{*(n-k)})(x) \right) + \varrho^n q_2^{*n}(x) \\
=: (1-\varrho^n) q_{n,1}(x) + \varrho^n q_{n,2}(x) \,,
\label{eq:decomposition-p}
\end{multline}
where $q_{n,1}(x)$ and $q_{n,2}(x)$
are again probability densities.

We now need the following formula due to Nagaev \cite[Equation (0.8)]{Nagaev:70a}:
\linebreak For $n \in \mynat$ and $t \in \myreal$, we~have
\begin{align}
\label{eq:nagaev}
\ee e^{it\overline{S}_n} = \sum_{k=1}^{n} f^k(t) \overline\varphi_{n-k}(t) \,,
\end{align}
where 
\begin{align}
\label{eq:varphidef}
\overline\varphi_0(t) := 1 \qquad \text{and} \qquad \overline\varphi_k(t) := \int_{-\infty}^{0} (1-e^{itx}) \, d\overline{F}_{k}(x) \qquad (k > 0) \,.
\end{align}
By \eqref{eq:nagaev} and the uniqueness theorem for Fourier transforms (of signed measures), 
it follows that the density of $\overline{S}_n := \max \{ S_1,\hdots,S_n \}$
is given by
$$
\overline{p}_n(x) = \sum_{k=1}^{n} (p^{\ast k} \ast G_{n-k})(x) \,,
$$
where 
$$
G_{0}(dx) := \delta_0(dx), 
\quad
G_{k}(dx) := \overline{F}_{k}(0) \delta_0(dx) - \overline{p}_{k}(x) \, \pmb{1}_{(-\infty,0)}(x) \, dx 
\quad 
\text{for $k > 0$}
$$
and $(p^{\ast k} \ast G_{n-k})(x) := \int p^{\ast k}(x-y) \, G_{n-k}(dy)$.

\pagebreak[2]

Using \eqref{eq:decomposition-p}, we may write
\begin{align}
\label{eq:decomposition-op}
\overline{p}_n(x) = \overline{q}_n(x) + \overline{r}_n(x) \,,
\end{align}
where
\begin{align}
\label{eq:qrdef}
\overline{q}_n(x) := \sum_{k=1}^{n} (1-\varrho^k) (q_{k,1} \ast G_{n-k})(x) \,,
\quad
\overline{r}_n(x) := \sum_{k=1}^{n} \varrho^k (q_{k,2} \ast G_{n-k})(x) \,.
\end{align}
Note that each $\overline{q}_n$ is bounded,
since the $q_{k,1}$ are bounded and the $G_{n-k}$ are finite signed measures.
\pagebreak[2]
The main idea is to use $\overline{q}_n$
as a~bounded approximation to~$\overline{p}_n$.
Of~course, $\overline{q}_n$ and $\overline{r}_n$ 
are only signed densities in general. However, 
they may be represented as differences of non-negative densities 
by writing
$$
\overline{q}_n(x) = \overline{q}_n^+(x) - \overline{q}_n^-(x) 
\quad\text{and}\quad
\overline{r}_n(x) = \overline{r}_{n,1}(x) - \overline{r}_{n,2}(x) \,,
$$
where $\overline{q}_n^+$ and $\overline{q}_n^-$
denote the positive and negative part of $\overline{q}_n$
and $\overline{r}_{n,1}$ and $\overline{r}_{n,2}$
are~defined~by
$$
\overline{r}_{n,j}(x) := \sum_{k=1}^{n} \varrho^k (q_{k,2} \ast G^{\pm}_{n-k})(x)
$$
($j=1,2$), where $\pm = +$ for $j=1$, $\pm = -$ for $j=2$,
and $G^+_{n-k}$ and $G^-_{n-k}$ denote \linebreak 
the positive and negative part of the signed measure $G_{n-k}$.
Note that $\overline{r}_{n,1}$ and $\overline{r}_{n,2}$
are \emph{not} the positive and negative part of $\overline{r}_n$
in general. 

\pagebreak[2]

Thus, we obtain
\begin{align}
\label{eq:key-1}
\overline{p}_n = (\overline{q}_n^+ - \overline{q}_n^-) + (\overline{r}_{n,1} - \overline{r}_{n,2})
\end{align}
or (equivalently)
\begin{align}
\label{eq:key-2}
\overline{p}_n + \overline{q}_n^- + \overline{r}_{n,2} = \overline{q}_n^+ + \overline{r}_{n,1} \,.
\end{align}
Write
$\overline{p}_n^*(x) := \sqrt{n} \, \overline{p}_n(\sqrt{n} \, x)$,
$\overline{q}_n^*(x) := \sqrt{n} \, \overline{q}_n(\sqrt{n} \, x)$,
$\overline{r}_n^*(x) := \sqrt{n} \, \overline{r}_n(\sqrt{n} \, x)$,
etc. for the rescaled versions of the above densities.
We then have the following result.

\begin{lemma}
\label{lemma:entropy-q} \ 
\begin{enumerate}[(a)]
\item
$\int_0^\infty |\overline{p}_n^*(x) - \overline{q}_n^*(x)| \, dx = \myo(n^{-1/2}) \,.$
\item
$\int_0^\infty x^2 |\overline{p}_n^*(x) - \overline{q}_n^*(x)| \, dx = \myo(n^{-1/2}) \,.$
\item
If \eqref{eq:ECLT-2} holds then $D(\overline{p}_n^* \,|\, \varphi_+) = D((\overline{q}_n^*)^+ \,|\, \varphi_+) + o(1)$ as $n \to \infty$.
\end{enumerate}
\end{lemma}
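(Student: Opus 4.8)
The plan is to bootstrap from parts (a) and (b) to part (c). The key structural facts are the binomial decomposition $\overline p_n = \overline q_n + \overline r_n$ with $\overline q_n = \overline q_n^+ - \overline q_n^-$ and $\overline r_n = \overline r_{n,1} - \overline r_{n,2}$, and the rescaling identity \eqref{eq:scalinginvariance}, which lets me work with $\varphi_+$ throughout. First I would record the total masses of the various pieces on the positive half-line. From \eqref{eq:osn} we have $\int_0^\infty \overline p_n^*(x)\,dx = \pp(\overline S_n > 0) = 1 + \myo(n^{-1/2})$, and part (a) then gives $\int_0^\infty \overline q_n^*(x)\,dx = 1 + \myo(n^{-1/2})$ as well. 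Since $\overline q_n^* = (\overline q_n^*)^+ - (\overline q_n^*)^-$, I need to argue that the negative-part mass $\int_0^\infty (\overline q_n^*)^-(x)\,dx$ is $\myo(n^{-1/2})$; this should follow because the only source of negativity in $\overline q_n$ comes from the negative parts of the measures $G_{n-k}$, which are controlled by $\pp(\overline S_k \le 0)$-type quantities that are uniformly small by \eqref{eq:osn}. Consequently $\int_0^\infty (\overline q_n^*)^+(x)\,dx = 1 + \myo(n^{-1/2})$ too, and likewise $\int_0^\infty \overline r_{n,j}^*(x)\,dx = \myo(1)$ (in fact much smaller, being $O(\varrho^{\,\text{something}})$ summed) for $j = 1,2$ — here one uses $\varrho < \tfrac12$ and that the $q_{k,2}$ are probability densities, so the total $\overline r_n^*$ mass is dominated by $\sum_k \varrho^k$ times bounded factors.

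Next I apply the perturbation lemma, Lemma \ref{lemma:perturb}, repeatedly. Starting from the identity \eqref{eq:key-1}, write $\overline p_n^* = (\overline q_n^*)^+ + \big(\overline r_{n,1}^* - \overline q_n^{*-} - \overline r_{n,2}^*\big)$. The first summand has mass $1 + o(1)$ and the bracketed remainder has mass $o(1)$ by the previous paragraph, so Lemma \ref{lemma:perturb} gives
\[
D(\overline p_n^* \,|\, \varphi_+) = D\big((\overline q_n^*)^+ \,|\, \varphi_+\big) + D\big(\overline r_{n,1}^* - \overline q_n^{*-} - \overline r_{n,2}^* \,\big|\, \varphi_+\big) + o(1).
\]
It remains to show that the middle relative-entropy term tends to $0$. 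For this I would first use Lemma \ref{lemma:perturb} once more (or Lemma \ref{lemma:convexity}) to split it into the contributions of $\overline r_{n,1}^*$, $\overline q_n^{*-}$, and $\overline r_{n,2}^*$ separately, each of which is a non-negative function with mass $o(1)$; then I need a bound of the form $D(h \,|\, \varphi_+) = o(1)$ whenever $h \ge 0$, $\int_0^\infty h = o(1)$, \emph{and} $h$ is suitably integrable against $x^2$. The point is that $D(h\,|\,\varphi_+) = \int_0^\infty h\log h\,dx + \tfrac12\int_0^\infty x^2 h\,dx + \big(\log\sqrt{\pi/2}\big)\int_0^\infty h\,dx$; the last two terms are $o(1)$ using part (b) (which controls $\int x^2 |\overline p_n^* - \overline q_n^*|$, hence the $x^2$-masses of the remainder pieces) together with the corresponding estimates for the $\overline r$-pieces, and the $\int h\log h$ term is handled by splitting $\{h \le 1\}$ and $\{h > 1\}$: on the former $|h\log h| \le 1/e$ pointwise but one needs the small total mass to kill it, and on the latter $h\log h \le h\cdot x^2$-type growth must be tamed — here the boundedness of $\overline q_n^-$ (it is a difference-derived piece of a bounded function, so $\overline q_n^{*-}$ is bounded by $C n^{-1/2}\cdot\|\text{stuff}\|_\infty$, which is actually $o(1)$ uniformly) makes the $\overline q_n^{*-}$ contribution trivial, while for $\overline r_{n,j}^*$ one again exploits the geometric $\varrho^k$ weights.

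The main obstacle I anticipate is the $\int_0^\infty \overline r_{n,j}^*\log\overline r_{n,j}^*\,dx = o(1)$ estimate, because $\overline r_{n,j}^*$ is built from the potentially unbounded density $q_{n,2}$ and so one cannot simply use an $L^\infty$ bound. The workaround is that $\overline r_{n,j}$ is a convolution of $q_{k,2}$ with the \emph{finite signed measure} $G^\pm_{n-k}$, and convolving with a sub-probability measure can only decrease relative entropy up to controlled additive constants — this is exactly the mechanism behind Lemmas \ref{lemma:posneg} and \ref{lemma:scalar}. Thus $D(\overline r_{n,j}^* \,|\, \varphi_+)$ should be bounded, via Lemma \ref{lemma:combination} applied to the sum over $k$ in \eqref{eq:qrdef}, by roughly $\sum_k \varrho^k\, D(q_{k,2}^* \,|\, \varphi_+) + (\log\sum_k\varrho^k)(\sum_k\varrho^k)$ plus corrections, and each $D(q_{k,2}\,|\,\varphi_+)$ is finite once $D_+(X_1\,|\,|Z|) < \infty$ by the finiteness equivalence \eqref{eq:finiteness} applied to the decomposition \eqref{eq:decomposition-p0} and its convolution powers. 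Since $\sum_k \varrho^k < \infty$ with $\varrho < \tfrac12$, and the tail $\sum_{k > \varepsilon\log n}\varrho^k \to 0$, a dominated-convergence argument over $k$ then yields $D(\overline r_{n,j}^*\,|\,\varphi_+) \to 0$; combined with the $\overline q_n^{*-}$ bound this finishes part (c). One subtlety to flag is that Lemma \ref{lemma:combination} as stated needs the relevant $D(f_k\,|\,\psi)$ to be finite, which is precisely where condition \eqref{eq:ECLT-2} enters — without it, the $\overline r_n$ contribution need not be negligible, consistent with the fact that part (c) is the only part of the lemma that invokes \eqref{eq:ECLT-2}.
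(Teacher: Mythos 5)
Your overall plan for part (c) is sound at the structural level: you correctly identify that the essential point is $D(\overline r_{n,j}^*\,|\,\varphi_+)=o(1)$ and $D((\overline q_n^*)^-\,|\,\varphi_+)=o(1)$, and that these should be bootstrapped through Lemma~\ref{lemma:perturb}. A minor point first: your rearrangement $\overline p_n^* = (\overline q_n^*)^+ + \bigl(\overline r_{n,1}^* - \overline q_n^{*-} - \overline r_{n,2}^*\bigr)$ cannot be fed into Lemma~\ref{lemma:perturb} directly, since the bracketed remainder is not nonnegative; the paper instead moves to the two--sided identity \eqref{eq:key-2}, $\overline p_n + \overline q_n^- + \overline r_{n,2} = \overline q_n^+ + \overline r_{n,1}$, in which every term is nonnegative, and applies Lemma~\ref{lemma:perturb} to each side. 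This is a small fix, but worth flagging.

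The genuine gap is in your estimate $D(\overline r_{n,j}^*\,|\,\varphi_+)=o(1)$. You propose to bound it, via Lemma~\ref{lemma:combination}, in terms of $D(q_{k,2}^*\,|\,\varphi_+)$ (times $\varrho^k$-weights), and then invoke the finiteness equivalence \eqref{eq:finiteness} to say each $D(q_{k,2}\,|\,\varphi_+)$ is finite, plus a dominated-convergence argument in $k$. This does not close for two reasons. First, \eqref{eq:finiteness} concerns \emph{sums} $\alpha f+\beta g$, not convolutions: finiteness of $D(q_2\,|\,\varphi_+)$ gives no control of $D(q_2^{\,\ast k}\,|\,\varphi_+)$ via that remark. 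Second, and more fundamentally, the quantities actually appearing are $D(q_{k,2}^*\,|\,\varphi_+)=D(q_{k,2}\,|\,\varphi_{n,+})$ by \eqref{eq:scalinginvariance}; these depend on $n$ and in fact grow like $\log n$ as $n\to\infty$, so there is no $n$-independent dominating sequence. The paper obtains the needed quantitative control by (i) decomposing $q_2=\lambda_+q_{2,+}+\lambda_-q_{2,-}$ and using Lemma~\ref{lemma:posneg} to absorb the factors supported on $(-\infty,0]$ (including the density $s_{n-k}$ coming from $G_{n-k}^{\pm}$), reducing matters to $D(q_{2,+}^{\ast j}\,|\,\varphi_{n,+})$; and then (ii) applying the \emph{entropy power inequality} (Dembo--Cover--Thomas) to get the convolution monotonicity $D(q_{2,+}^{\ast j}\,|\,\varphi_{j\mu,j\sigma^2}) \le D(q_{2,+}\,|\,\varphi_{\mu,\sigma^2})$, followed by an explicit change of reference density from $\varphi_{j\mu,j\sigma^2}$ to $\varphi_{n,+}$, yielding $D(q_{2,+}^{\ast j}\,|\,\varphi_{n,+})=\myo(\log n + j + 1)$. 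Only with this bound does the weighted sum $\sum_k \varrho^k\,\overline F_{n-k}(0)\cdot\myo(\log n + k + 1)$ become $\myo(\log n/\sqrt n)=o(1)$. Your proposal gestures at Lemma~\ref{lemma:posneg} but never performs the positive/negative split of $q_2$, and the entropy power inequality --- which is what actually tames the convolution powers --- is missing entirely. Without it, the $D(\overline r_{n,j}^*\,|\,\varphi_+)=o(1)$ step does not go through.
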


\begin{proof}
Throughout this proof, for any measurable function $p$, we~write
$$
\|p\|_1 := \int_0^\infty |p(x)| \, dx
$$
for the \emph{total variation norm} (of the associated signed measure) and
$$
\|p\|_\infty := \sup_{x \in (0,\infty)} |p(x)|
$$
for the \emph{supremum norm}. Furthermore, if $p$ is non-negative,
we write $D(p \,|\, \varphi_+)$ for the relative entropy
as in \eqref{eq:relentdef}. 
Recall the probability densities $\varphi_{n,+}$
introduced at the end of Section~2.

\pagebreak[2]
\medskip
\myparagraph{Analysis of $\pmb{\overline{r}_{n,j}^*(x)}$} 
By \eqref{eq:osn}, $\overline{F}_{n}(0) = \myo(n^{-1/2})$ as $n \to \infty$.
Thus,
\begin{align}
\label{eq:tvr}
\|\overline{r}_{n,j}^*\|_1 = \|\overline{r}_{n,j}\|_1 \leq \sum_{k=1}^{n} \overline{F}_{n-k}(0) \varrho^k \leq \sum_{k=1}^{n} \frac{C_1 \varrho^k}{\sqrt{n-k+1}} = \myo (n^{-1/2}) \,,
\end{align}
$j=1,2$. Also, since $G_{n-k}^{\pm}$ is concentrated on $(-\infty,0]$,
$$
\int_0^\infty x^2 \overline{r}_{n,j}^*(x) \, dx 
\leq \tfrac1n \sum_{k=1}^{n} \frac{C_1 \varrho^k}{\sqrt{n-k+1}} \int_{-\infty}^{\infty} x^2 q_{k,2}(x) \, dx \,,
$$
$j=1,2$. Let $Y_1,\hdots,Y_k$ be \iid random variables with density $q_2$.
Then
$$
  \int_{-\infty}^{\infty} x^2 q_{k,2}(x) \, dx
= \|Y_1+\hdots+Y_k\|_2^2 
\leq k^2 \, \|Y_1\|_2^2 \,,
$$
and we come to the conclusion that
\begin{align}
\label{eq:smr}
\int_0^\infty x^2 \overline{r}_{n,j}^*(x) \, dx \leq \tfrac1n \sum_{k=1}^{n} \frac{C_1 \varrho^k}{\sqrt{n-k+1}} \int_{-\infty}^{\infty} x^2 q_{k,2}(x) \, dx = \myo (n^{-3/2}) \,,
\end{align}
$j=1,2$. Clearly, \eqref{eq:tvr} and \eqref{eq:smr} imply (a) and (b).

We will now show that if \eqref{eq:ECLT-2} holds then
\begin{align}
\label{eq:entropy-r}
  D\left(\overline{r}_{n,j}^* \,|\, \varphi_+\right) 
= D\left(\left. \sum_{k=1}^{n} \varrho^k \, q_{k,2} \ast G_{n-k}^{\pm} \right| \varphi_{n,+} \right)
= o(1) \,,
\end{align}
$j=1,2$. 
We provide the details for $\overline{r}_{n,2}^*$ only,
the argument for $\overline{r}_{n,1}^*$ being similar.

Note that $G_{0}^{-} = 0$. For $k = 1,\hdots,n-1$,
write $G_{n-k}^{-}(dx) = \overline{F}_{n-k}(0) \, s_{n-k}(x) \, dx$,
where $s_{n-k}(x) := \overline{p}_{n-k}(x) / \overline{F}_{n-k}(0)$ 
($x < 0$) is a probability density on $(-\infty,0)$.
Also, write $q_2 = \lambda_+ q_{2,+} + \lambda_- q_{2,-}$,
where $\lambda_+,\lambda_- \geq 0$, $\lambda_+ + \lambda_- = 1$,
and $q_{2,+}$ and $q_{2,-}$ \linebreak[2] are probability densities 
on $(0,+\infty)$ and $(-\infty,0)$, respectively.
Then
$$
q_{k,2} = \sum_{j=0}^{k} \tbinom{k}{j} \lambda_+^j \, \lambda_-^{k-j} \, q_{2,+}^{\ast j} \ast q_{2,-}^{\ast (k-j)} \,,
$$
and it follows by a two-fold application of Lemma \ref{lemma:combination} that
\begin{align*}
D&\left(\left.\sum_{k=1}^{n} \varrho^k \, q_{k,2} \ast G_{n-k}^{-} \right| \varphi_{n,+}\right) \\
&\leq \sum_{k=1}^{n-1} \varrho^k \, \overline{F}_{n-k}(0) \, D\left( q_{k,2} \ast s_{n-k} \,\Big|\, \varphi_{n,+} \right) + \myo(\log n / \sqrt{n}) \\
&\leq \sum_{k=1}^{n-1} \varrho^k \, \overline{F}_{n-k}(0) \, \sum_{j=1}^{k} \tbinom{k}{j} \lambda_+^j \, \lambda_-^{k-j} \, D\left(\left. q_{2,+}^{\ast j} \ast q_{2,-}^{\ast (k-j)} \ast s_{n-k} \right| \varphi_{n,+}\right) + \myo(\log n / \sqrt{n}) \,.
\end{align*}
(For the last step, note that $D( q_{2,-}^{\ast k} \ast s_{n-k} \,|\, \varphi_{n,+}) = 0$.)
Using Lemma \ref{lemma:posneg} 
with $f(x) := q_{2,+}^{\ast j}(x)$
and $g(x) := (q_{2,-}^{\ast (k-j)} \ast s_{n-k})(x)$,
we~get
$$
     D\left(\left. q_{2,+}^{\ast j} \ast q_{2,-}^{\ast (k-j)} \ast s_{n-k} \right| \varphi_{n,+}\right)
\leq D\left(q_{2,+}^{\ast j} \,\Big|\, \varphi_{n,+}\right) + e^{-1} \,.
$$
Let $\mu$ and $\sigma^2$ denote the mean and variance 
of the probability density $q_{2,+}$,
and let $\varphi_{\mu,\sigma^2}$ denote the density
of the Gaussian distribution with mean $\mu$ and variance $\sigma^2$.
As a consequence of the entropy power inequality 
(see \eg Theorem~4 in \cite{DCT:91}), we have
$$
D(q_{2,+}^{\ast j} | \varphi_{j\mu,j\sigma^2}) \leq D(q_{2,+} | \varphi_{\mu,\sigma^2}) \,, \quad j \geq 1 \,.
$$
We therefore obtain
\begin{align*}
   D(q_{2,+}^{\ast j} | \varphi_{n,+})
&= \int_0^\infty q_{2,+}^{\ast j} \log \left( \frac{q_{2,+}^{\ast j}}{\varphi_{j\mu,j\sigma^2}} \frac{\varphi_{j\mu,j\sigma^2}}{\varphi_{n,+}} \right) \, dx \\
&= D(q_{2,+}^{\ast j} | \varphi_{j\mu,j\sigma^2}) + \int_0^\infty q_{2,+}^{\ast j} \log \left( \frac{\varphi_{j\mu,j\sigma^2}}{\varphi_{n,+}} \right) \, dx \\
&\leq D(q_{2,+} | \varphi_{\mu,\sigma^2}) + \myo(\log n + j + 1) \\
&= \int_0^\infty q_{2,+} \log \left( \frac{q_{2,+}}{\varphi_+} \frac{\varphi_+}{\varphi_{\mu,\sigma^2}} \right) \, dx + \myo(\log n + j + 1) \\
&= D(q_{2,+} | \varphi_+) + \int_0^\infty q_{2,+} \log \left( \frac{\varphi_+}{\varphi_{\mu,\sigma^2}} \right) \, dx + \myo(\log n + j + 1) \\
&= \myo(\log n + j + 1) \,,
\end{align*}
the implicit constants depending only on $q_{2,+}$.
Here the last step follows from \eqref{eq:ECLT-2},
see the~remark below Lemma \ref{lemma:convexity}.

Combining the preceding estimates, it follows that
\begin{multline*}
     D\left(\left.\sum_{k=1}^{n} \varrho^k \, q_{k,2} \ast G_{n-k}^{-} \right| \varphi_{n,+}\right) \\
\leq \sum_{k=1}^{n-1} \varrho^k \, \overline{F}_{n-k}(0) \, \myo(\log n + k + 1) + \myo(\log n / \sqrt{n})
   = \myo(\log n/\sqrt{n}) \,,
\end{multline*}
and the proof of \eqref{eq:entropy-r} is complete.

\pagebreak[2]
\medskip
\myparagraph{Analysis of $\pmb{(\overline{q}_{n}^*)^{\pm}(x)}$}
To complete the proof of part (c), we will show
that the relative entropy of the main terms 
$\overline{p}_n^*$ and $(\overline{q}_n^*)^+$ in \eqref{eq:key-2}
is ``stable'' \wrt the~addition of the error terms 
$\overline{r}_{n,1}^*$, $\overline{r}_{n,2}^*$ and $(\overline{q}_n^*)^-$.
To begin with, it follows from~\eqref{eq:key-2} that
$$
(\overline{q}_n^*)^{+} \leq \overline{p}^*_n + \overline{r}^*_{n,2}
\quad\text{and}\quad
(\overline{q}_n^*)^{-} \leq \overline{r}^*_{n,1}
$$
and therefore,
since $\|\overline{p}_n^*\|_1 = 1 - \overline{F}_n(0) = 1 + \myo(1/\sqrt{n})$ and $\|\overline{r}_{n,j}^*\|_1 = \myo(1/\sqrt{n})$ ($j=1,2$),
\begin{align}
\label{eq:tvq}
\|(\overline{q}_{n}^*)^{+}\|_1 = 1 + \myo(1/\sqrt{n}) 
\quad\text{and}\quad
\|(\overline{q}_{n}^*)^{-}\|_1 = \myo(1/\sqrt{n}) \,.
\end{align}

Next we will show that
\begin{align}
\label{eq:entropy-qminus}
D((\overline{q}_n^*)^- \,|\, \varphi_+) = D(\overline{q}_n^- \,|\, \varphi_{n,+}) = o(1) \,.
\end{align}
Since $q_1$ is bounded by construction,
$(1-\varrho^k) q_{k,1}$ is bounded uniformly in $k \geq 1$,
and we obtain
$$
\|\overline{q}_{n}\|_\infty = \|\sum_{k=1}^{n} (1-\varrho^k) q_{k,1} \ast G_{n-k}\|_\infty = \myo \left( \sum_{k=1}^{n} \frac{1}{\sqrt{n-k+1}} \right) = \myo(\sqrt{n}) \,.
$$
Since $\|\overline{q}_n^-\|_1 = \myo(1/\sqrt{n})$,
it~follows that
\begin{multline*}
  D(\overline{q}_n^- | \varphi_{n,+}) 
= \int_0^\infty \overline{q}_n^- \log(\overline{q}_n^- / \varphi_{n,+}) \, dx
\leq \int_0^\infty \overline{q}_n^- \log(C_1 \sqrt{n} / \varphi_{n,+}) \, dx \\
= \myo\Big(\frac{\log n}{\sqrt{n}}\Big) + \myo\Big(\int_0^\infty \tfrac1n x^2 \, \overline{q}_n^-(x) \, dx\Big) \,.
\end{multline*}
Now, using \eqref{eq:key-2} and \eqref{eq:smr}, we have
$$
     \int_0^\infty \tfrac1n x^2 \overline{q}_n^-(x) \, dx 
\leq \int_0^\infty \tfrac1n x^2 \overline{r}_{n,1}(x) \, dx 
= \int_0^\infty y^2 \overline{r}_{n,1}^*(y) \, dy
= \myo(n^{-3/2}) \,.
$$
This completes the proof of \eqref{eq:entropy-qminus}.

Using \eqref{eq:key-2}, \eqref{eq:entropy-r}, \eqref{eq:entropy-qminus} 
as well as Lemma \ref{lemma:perturb}, we now obtain
\begin{align*}
   D(\overline{p}_n^* | \varphi_+) 
&= D(\overline{p}_n^* + (\overline{q}_n^*)^- + \overline{r}_{n,2}^* | \varphi_+) + o(1) \\
&= D((\overline{q}_n^*)^+ + \overline{r}_{n,1}^* | \varphi_+) + o(1) \\ 
&= D((\overline{q}_n^*)^+ | \varphi_+) + o(1)
\end{align*}
as $n \to \infty$, and Lemma \ref{lemma:entropy-q} is proved.
\end{proof}

\pagebreak[2]
\medskip
\section{Proof of Sufficiency in Theorem \ref{thm:ECLT}}

This section contains the main part 
of the proof of sufficiency in Theorem \ref{thm:ECLT}.
It relies on two auxiliary results 
which do not depend on condition \eqref{eq:ECLT-2}
and whose proof is postponed to the following sections.

\begin{proposition}
\label{prop:intlimit}
For any $\varepsilon > 0$, there exists a constant $C > 0$
such that
$$
\int_{C}^{\infty} x^2 \, \overline{p}_n^*(x) \, dx \leq \varepsilon
$$
for all sufficiently large $n \in \mynat$.
\end{proposition}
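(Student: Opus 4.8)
The plan is to establish uniform tightness of the second moments of $\overline{S}_n^+/\sqrt{n}$ by a truncation argument combined with the reflection-type bound $\pp(\overline{S}_n \geq x) \leq 2\,\pp(S_n \geq x - C\sqrt{n})$ (in an appropriate form) that underlies the Erd\H{o}s--Kac theorem. First I would write $\int_C^\infty x^2\,\overline{p}_n^*(x)\,dx = \tfrac1n\,\ee\big[(\overline{S}_n)^2\,\pmb{1}_{\{\overline{S}_n > C\sqrt{n}\}}\big]$ and express this via the tail through the layer-cake formula, namely as a constant times $\int_C^\infty t\,\pp(\overline{S}_n/\sqrt{n} > t)\,dt$ plus the boundary term $C^2\,\pp(\overline{S}_n/\sqrt{n} > C)$. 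Since \eqref{eq:second-moment} already gives $\ee(\overline{S}_n^+/\sqrt{n})^2 = 1 + o(1)$, the family $\{(\overline{S}_n^+/\sqrt{n})^2\}$ is bounded in $L^1$; the issue is purely uniform integrability of $\overline{S}_n/\sqrt{n}$ against the weight $t$, i.e. controlling the contribution of the far tail uniformly in $n$.

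The key step is a maximal inequality giving a uniform tail bound. I would use the L\'evy-type / Etemadi-type inequality for the maximum of partial sums together with the second-moment bound $\ee S_n^2 = n$: for any $t > 0$ one has $\pp(\overline{S}_n > t\sqrt{n}) \leq \pp(\max_{k\le n}|S_k| > t\sqrt{n}) \leq C\,\pp(|S_n| > (t/3)\sqrt{n}) + \text{(lower-order)}$, or alternatively Kolmogorov's inequality directly yields $\pp(\overline{S}_n > t\sqrt{n}) \leq \ee S_n^2/(t\sqrt{n})^2 = t^{-2}$, which already makes $\int_C^\infty t\,\pp(\overline{S}_n/\sqrt{n}>t)\,dt$ finite but not obviously small. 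To get smallness I would instead combine the uniform bound on $\ee(\overline{S}_n/\sqrt{n})^2$ with a truncation: split $X_j = X_j\pmb{1}_{\{|X_j|\le M\sqrt{n}\}} + X_j\pmb{1}_{\{|X_j|> M\sqrt{n}\}}$, estimate the contribution of the big jumps by $n\,\ee\,X_1^2\pmb{1}_{\{|X_1|>M\sqrt{n}\}} = o(n)$ uniformly, and for the truncated part use the Fuk--Nagaev inequality (or a Bernstein-type bound) to get exponential decay of $\pp(\overline{S}_n > t\sqrt{n})$ beyond a threshold depending only on $\varepsilon$. Integrating this against $t\,dt$ from $C$ to $\infty$ then gives a bound that tends to $0$ as $C \to \infty$, uniformly in $n$, which is exactly the claim.

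Alternatively, and perhaps more in the spirit of the paper, one can bypass delicate tail inequalities: since \eqref{eq:second-moment} supplies $\sup_n \ee(\overline{S}_n^+/\sqrt{n})^2 < \infty$ but not equi-integrability of the squares, the cleanest route is to prove the slightly stronger uniform bound $\sup_n \ee(\overline{S}_n^+/\sqrt{n})^{2+\delta} < \infty$ for some $\delta > 0$ under a moment assumption, or to note that the binomial-decomposition machinery already controls $\overline{p}_n^*$ in terms of convolution densities whose second-moment tails are handled in Lemma~\ref{lemma:entropy-q}(b). Concretely, one would bound $\overline{p}_n^*$ above by a rescaled tail of $S_n/\sqrt{n}$ plus negligible terms, and then invoke the classical fact that under mean-zero, variance-one the family $\{S_n/\sqrt{n}\}$ has uniformly integrable squares (a standard truncation consequence of $\ee X_1^2 < \infty$), which transfers to $\overline{S}_n/\sqrt{n}$ via the reflection bound.

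The main obstacle I anticipate is obtaining the tail control \emph{uniformly in $n$} with only the bare assumptions $\ee X_1 = 0$, $\ee X_1^2 = 1$: a single Kolmogorov/Etemadi inequality gives the right order $t^{-2}$ but not the requisite smallness without an additional equi-integrability input, so the argument must lean either on the truncation estimate $n\,\ee\,X_1^2\pmb{1}_{\{|X_1|>M\sqrt{n}\}} = o(n)$ together with a Fuk--Nagaev bound for the truncated sums, or on reducing to the known uniform integrability of $\{(S_n/\sqrt{n})^2\}$ via a reflection inequality for the maximum. Handling the passage from the maximum $\overline{S}_n$ back to $S_n$ cleanly — so that no factor depending on $n$ sneaks in — is the delicate point; everything else is routine layer-cake bookkeeping.
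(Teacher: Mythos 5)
Your core plan---transferring the tail of $\overline{S}_n/\sqrt{n}$ to that of $S_n/\sqrt{n}$ via an Ottaviani-type reflection inequality and then invoking the classical uniform integrability of $\{(S_n/\sqrt{n})^2\}$---is sound and does yield the claim, but it is a genuinely different route from the paper's. The paper's primary proof is Fourier-analytic: it deduces Proposition~\ref{prop:intlimit} from Proposition~\ref{prop:charfunc} (uniform convergence of $\ee\,e^{it\overline{S}_n/\sqrt{n}}$ and its first two derivatives to $\hat\varphi_+$ and its derivatives on $|t|\leq\gamma\sqrt{n}$), combined with the truncated-moment inequality $\int_{|x|>T} x^2\,d\pp_X \leq \tfrac{T}{2}\int_{-2/T}^{2/T}(-1)\big(f_X''(0)-f_X''(t)\big)\,dt$; the advantage there is that the argument can be refined under higher-moment hypotheses to give rates of decay. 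The paper also sketches a second proof (the Remark at the end of Section~6) which, like yours, reduces to a uniform-integrability criterion, but arrives there by computing $\ee\big((\overline{S}_n^+/\sqrt{n})^2\big)\to 1$ explicitly via Spitzer's formula and then applying the criterion that convergence in distribution plus convergence of $L^1$-norms implies uniform integrability. Your version avoids both Fourier inversion and Spitzer's formula, trading them for a maximal inequality: for $C\geq 2\sqrt{2}$ and $t\geq C$, Ottaviani's inequality with Chebyshev in the denominator gives $\pp(\overline{S}_n\geq t\sqrt{n})\leq 2\,\pp\big(|S_n|\geq (t/2)\sqrt{n}\big)$, and a layer-cake computation then bounds $\ee\big[(\overline{S}_n/\sqrt{n})^2\pmb{1}_{\{\overline{S}_n/\sqrt{n}>C\}}\big]$ by a fixed multiple of $\ee\big[(S_n/\sqrt{n})^2\pmb{1}_{\{|S_n/\sqrt{n}|>C/2\}}\big]$, which is uniformly small by the standard truncation argument (or by the same criterion applied to $S_n/\sqrt{n}\Rightarrow Z$ with $\ee(S_n/\sqrt{n})^2 = 1$). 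This is more elementary and self-contained than either of the paper's arguments; its only drawback is that it yields no rate information. Two caveats: the Fuk--Nagaev detour you also sketch would work but is heavier than necessary here, and you should not cite \eqref{eq:second-moment} as an independent input, since the paper only establishes that identity as part of the same Section~6 Remark whose conclusion you are proving.
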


\begin{proposition}
\label{prop:local}
Under the assumptions of Theorem \ref{thm:ECLT}, 
there exist signed \linebreak densities $r_n(x)$ such that 
$\|r_n\|_1 = \myo(1/\sqrt{n})$, $\|r_n\|_\infty = \myo(1)$
and the following holds:
\begin{enumerate}[(a)]
\item
Uniformly in $x \in (0,\infty)$,
$$
\overline{q}_n^*(x) = \varphi_+(x) + r_n(x) + o(1/x) \qquad \text{as $n \to \infty$} \,.
$$
\item
Uniformly in $x \in (0,e^{-1})$,
$$
\overline{q}_n^*(x) = \varphi_+(x) + r_n(x) + \myo\left(\log n \wedge \tfrac{1}{\sqrt{n} x}\right) + \myo\left(\log x^{-1}\right) \qquad \text{as $n \to \infty$} \,.
$$
\end{enumerate}
\end{proposition}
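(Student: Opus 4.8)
The plan is to prove the stated local limit theorem for $\overline{q}_n$ directly by Fourier inversion, adapting the scheme of Nagaev \cite{Nagaev:69,Nagaev:70} and the local limit theorem of Aleshkyavichene \cite{Alesh:77} for $\overline{S}_n$, but carrying the binomial decomposition \eqref{eq:decomposition-p}--\eqref{eq:qrdef} through the whole argument; this is exactly what lets one replace the usual boundedness hypothesis on $p$ by the bare assumption that $X_1$ has a density. First I would record, starting from Nagaev's identity \eqref{eq:nagaev}, the decomposition \eqref{eq:decomposition-p}, and the observation (made below \eqref{eq:varphidef}) that the Fourier transform of $G_k$ equals $\overline{\varphi}_k$, the representation
\[
\widehat{\overline{q}}_n(t) = \sum_{k=1}^{n} \bigl(f^{k}(t) - \varrho^{k}\,\widehat{q}_{k,2}(t)\bigr)\,\overline{\varphi}_{n-k}(t) = \sum_{k=1}^{n} (1-\varrho^k)\,\widehat{q}_{k,1}(t)\,\overline{\varphi}_{n-k}(t),
\]
which, since each $q_{k,1}$ is bounded with $\|q_{k,1}\|_\infty \leq \|q_1\|_\infty$ uniformly in $k$ (so that $\widehat{q}_{k,1}$ is square-integrable and $|\widehat{q}_1(t)|<1$ for $t\neq0$), exhibits $\overline{q}_n$ as the inverse Fourier transform of an integrable function. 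Working with the rescaled density $\overline{q}_n^*$ and passing freely between $\overline{q}_n^*$ and $\overline{q}_n$ by \eqref{eq:scalinginvariance}, the task becomes to estimate $\tfrac{1}{2\pi}\int e^{-itx}\,\widehat{\overline{q}}_n(t)\,dt$.

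Next I would split the frequency integral at a small fixed $\delta>0$ (after rescaling, at $|t|=\delta\sqrt{n}$). On the central block $|t|\le\delta$ one inserts the expansion $f(t)=1-\tfrac{t^2}{2}+o(t^2)$ together with the classical fluctuation-theoretic (Spitzer--Baxter / Wiener--Hopf) asymptotics of the one-sided factors $\overline{\varphi}_k(t)$ in the range $|t|=\myo(k^{-1/2})$ --- the same ingredients that underlie the Erd{\H{o}}s--Kac limit \eqref{thm:erdos-kac} --- and performs an Abel-type summation over $k$, comparing $\sum_k f^k(t)\overline{\varphi}_{n-k}(t)$ with its Gaussian analogue. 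After inversion this produces the main term $\varphi_+(x)$; the next-order contributions, of order $\myo(1/\sqrt n)$ in $L^1$ and $\myo(1)$ in $L^\infty$, are collected into the signed density $r_n$. (The part of the central integral built from $\sum_k\varrho^k\widehat{q}_{k,2}(t)\overline{\varphi}_{n-k}(t)$ is the Fourier transform of $\overline{r}_n$, already shown in the proof of Lemma \ref{lemma:entropy-q} to be $\myo(1/\sqrt n)$ in total variation, hence absorbable.) On the tail block $|t|>\delta$ one exploits boundedness of $q_1$: with $\theta:=\sup_{|t|\ge\delta}|\widehat{q}_1(t)|<1$ one gets $|(1-\varrho^k)\widehat{q}_{k,1}(t)|\le((1-\varrho)\theta+\varrho)^k$, which decays geometrically in $k$ (so even $q_{n,1}$ has a rapidly-decaying Fourier transform off the origin); combined with $\overline{F}_{n-k}(0)=\myo((n-k+1)^{-1/2})$ from \eqref{eq:osn} and the square-integrability of $\widehat{q}_1$ for the few small $k$, the entire tail is $\myo(1)$ in $L^\infty$ and $\myo(1/\sqrt n)$ in $L^1$, so it too may be folded into $r_n$. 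At this stage $\|r_n\|_1=\myo(1/\sqrt n)$ and $\|r_n\|_\infty=\myo(1)$.

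It remains to upgrade these $L^1$/$L^\infty$ bounds to the uniform pointwise statements (a) and (b). On a fixed compact subset of $(0,\infty)$ the central expansion is uniform and gives $\overline{q}_n^*(x)=\varphi_+(x)+r_n(x)+\myo(1)$ directly. As $x\to\infty$, $\varphi_+(x)$ is negligible and one needs $\overline{q}_n^*(x)-r_n(x)=o(1/x)$ uniformly; this follows from the uniform pointwise density bounds the Fourier inversion itself provides (which already decay in $x$ and rule out tall, narrow spikes in the tail), or, alternatively, by combining them with the integrated tail estimate of Proposition \ref{prop:intlimit} via Lemma \ref{lemma:entropy-q}(b) and \eqref{eq:second-moment}. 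Finally, for $x\in(0,e^{-1})$ one carries out the inversion more carefully in the regime $x\sqrt n=\myo(1)$: there $\overline{q}_n$ inherits, through the small-$k$ terms $(1-\varrho^k)q_{k,1}\ast G_{n-k}$ and the renewal (ascending/descending ladder) structure of $\overline{S}_n$ near the origin, a boundary layer of height $\myo(1)$ and width $\myo(1/\sqrt n)$ --- this is precisely the part of $r_n$ concentrated near $0$ --- and quantifying its Edgeworth/renewal expansion yields the additional errors $\myo(\log n\wedge\tfrac{1}{\sqrt n\,x})$ and $\myo(\log x^{-1})$ in (b), the latter coming from the logarithmic growth of the descending-ladder renewal density near $0$ and from the smoothing by the convolutions.

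The main obstacle is this last step: controlling $\overline{q}_n^*$ uniformly in $x$ across the bulk, the far tail, and a neighbourhood of the origin \emph{simultaneously}, and cleanly separating the genuinely non-smooth edge contribution $r_n$ (order one in sup-norm, order $1/\sqrt n$ in total variation) from the smooth main term $\varphi_+$. This is where dropping the boundedness hypothesis genuinely costs something: one cannot simply invoke \cite{Alesh:77}, since no convolution power of $p$ need be bounded, so the boundedness has to be re-injected through the $q_{k,1}$ at every stage of the inversion, and the attendant $\myo(\log n/\sqrt n)$-type losses (already visible in the proof of Lemma \ref{lemma:entropy-q}) must be shown to leave the delicate $o(1/x)$ estimate and the logarithmic edge estimates intact.
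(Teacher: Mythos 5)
Your outline --- Fourier-invert Nagaev's identity and carry the binomial decomposition through the inversion, so that boundedness of $q_1$ can stand in for boundedness of $p$ --- is indeed the paper's route, but there is a genuine gap at the very first step. You write $\widehat{\overline{q}}_n(t)=\sum_{k=1}^n(1-\varrho^k)\,\widehat{q}_{k,1}(t)\,\overline\varphi_{n-k}(t)$ and assert that this ``exhibits $\overline{q}_n$ as the inverse Fourier transform of an integrable function''. It does not: $\widehat{q}_{k,1}$ contains the contribution with a single convolution factor of $q_1$, whose Fourier transform is $\propto g_1(t)\,g_2(t)^{k-1}$, and all one knows is $g_1\in L^2$, $g_2\in L^\infty$, so this is square-integrable but in general \emph{not} integrable. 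Your pointwise bound $|g_1(t)|\le\theta<1$ for $|t|\ge\delta$ controls the size of the Fourier tail, not its integrability, so pointwise inversion of $\overline{q}_n$ is simply unavailable. The paper's remedy is structural, not a refinement to be supplied later: $r_n$ is defined explicitly in \eqref{eq:rndef} as \emph{precisely} the contribution of the terms with one or two convolution factors of $q_1$, and $\|r_n\|_1=\myo(n^{-1/2})$, $\|r_n\|_\infty=\myo(1)$ are established directly by convolution estimates, with no Fourier analysis at all. Only the remaining piece, built from the terms carrying at least three factors of $g_1$, has an $L^1$ Fourier transform --- and, crucially, still does after one integration by parts, which is exactly what the $o(1/x)$ bound in part (a) demands (cf.\ \eqref{eq:ttRL-0}--\eqref{eq:ttRL-1}). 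Without this separation the inversion argument never gets started, and the $r_n$ you describe --- a heuristic receptacle for ``next-order contributions'' plus the tail block --- has no workable definition and no reason to satisfy the two norm bounds.

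A secondary point concerns part (b). The $\myo(\log x^{-1})$ error is not a manifestation of ``logarithmic growth of the descending-ladder renewal density near $0$''; it is a harmonic-sum artefact in the Fourier estimates. For each of $I_3,I_4,I_5$ the paper bounds the $k$-th summand in two ways --- once after integration by parts, of order $1/(x\sqrt{k(n-k)})$, and once without, of order $\sqrt{n}/(k\sqrt{n-k})$ --- then splits the $k$-sum at $k\asymp nx^2$, and the middle block $\sum_{nx^2\le k\le n/2}1/k$ is what yields $\myo(\log x^{-1})$; the term $\myo\bigl(\log n\wedge\tfrac{1}{\sqrt n\,x}\bigr)$ is the analogous two-way estimate for $I_2$. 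The boundary-layer picture is not wrong in spirit, but it does not generate the quantitative form of the error and would be misleading to rely on when writing out the proof.
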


Here the norms $\|\,\cdot\,\|_1$ and $\|\,\cdot\,\|_\infty$
are defined as in the proof of Lemma \ref{lemma:entropy-q}.
More\-over, by the statement that the $\myo$-bounds and $o$-bounds
hold uniformly in~$x$, we mean that for sufficiently large $n \in \mynat$, 
the error term is bounded 
by $\varepsilon_n/x$ in~part~(a), 
where $(\varepsilon_n)_{n \in \mynat}$ is a sequence of positive real numbers
not depending on $x \in (0,\infty)$ such that $\lim_{n \to \infty} \varepsilon_n = 0$,
and by $C_1 \left( \log n \wedge \tfrac{1}{\sqrt{n} x} \right) + C_2 \left( \log x^{-1} \right)$ in~part~(b),
where $C_1$ and $C_2$ are positive constants not depending on~$x \in (0,e^{-1})$.
Similar conventions apply to the error terms
in the proof of Proposition \ref{prop:local}.

Note that Proposition \ref{prop:local} may be regarded as a local version 
of the Erd{\H{o}}s-Kac theorem \eqref{thm:erdos-kac}.
Moreover, part (b) is a refinement of part (a)
which yields a~better estimate for the error term
for $x \approx 0$.
Although this estimate is still unbounded,
it is square-integrable near the origin.
This is the crucial point for our purposes.

It should be mentioned that the proof of Proposition~\ref{prop:local} 
closely follows that in Aleshkyavichene \cite{Alesh:77},
which is based on earlier work by Nagaev \cite{Nagaev:69,Nagaev:70,Nagaev:70a}.
Indeed, in the special case where the $X_j$ 
have a bounded density $p(x)$, we~could~take 
$$
\overline{q}_n^*(x) := \overline{p}_n^*(x)
\quad\text{and}\quad
r_n(x) := \overline{F}_{n-1}(0) \, \sqrt{n} \, p(\sqrt{n} \, x) \quad (x > 0) \,,
$$
and part (a) specializes to the following result
from the literature:

\begin{theorem}[Aleshkyavichene \cite{Alesh:77}]
\label{thm:Alesh}
If $X_1,X_2,\dots$ have a bounded density $p(x)$, we have
$\overline{p}_n^*(x) = \varphi_+(x) + \overline{F}_{n-1}(0) \, \sqrt{n} p(\sqrt{n} x) + o(1/x)$,
uniformly in $x \in (0,\infty)$.
\end{theorem}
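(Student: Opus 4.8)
\medskip
\noindent\textbf{Proof plan.}
The plan is to invert Nagaev's formula \eqref{eq:nagaev} and analyse the resulting Fourier integral, peeling off the first summand $k=1$. By the real-space form of that formula, $\overline p_n(x)=\sum_{k=1}^{n}(p^{*k}*G_{n-k})(x)$, and after rescaling the $k=1$ term is
\[
\sqrt n\,(p*G_{n-1})(\sqrt n\,x)=\overline F_{n-1}(0)\,\sqrt n\,p(\sqrt n x)-\sqrt n\!\int_{0}^{\infty}\! p(\sqrt n x+u)\,\overline p_{n-1}(-u)\,du\,.
\]
The first term here is exactly the correction in the statement, so two things remain: that the second term is $o(1/x)$ uniformly, and that the sum over $2\le k\le n$ contributes $\varphi_+(x)+o(1/x)$. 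For the second term, writing it as $x\mapsto\sqrt n\,g_n(\sqrt n x)$ with $g_n=p*\bigl(\overline p_{n-1}\mathbf{1}_{(-\infty,0)}\bigr)$ and estimating $v\,g_n(v)\le\int_{v}^{\infty}t\,p(t)\,\overline p_{n-1}(v-t)\,dt$, one concludes using $\overline F_{n-1}(0)=\myo(n^{-1/2})$, the moment bound $\int_{v}^{\infty}t^{2}p(t)\,dt\le1$, and --- to deal with a possibly unbounded $\sup_t t\,p(t)$ --- a further truncation of $p$. So the heart of the matter is to show
\[
\frac{1}{2\pi}\int_{-\infty}^{\infty}e^{-iux}\,\overline g_n(u/\sqrt n)\,du=\varphi_+(x)+o(1/x)\qquad\text{uniformly in }x>0,
\]
where $\overline g_n(s):=\sum_{k=2}^{n}f(s)^{k}\,\overline\varphi_{n-k}(s)$ (which does lie in $L^{1}(\myreal)$, so the integral is a Lebesgue one).

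The pointwise limit $\overline g_n(u/\sqrt n)\to\widehat{\varphi_+}(u)=\int_0^\infty e^{iux}\varphi_+(x)\,dx$ is just the Erd{\H{o}}s--Kac limit \eqref{thm:erdos-kac} at the level of characteristic functions together with the fact that the omitted $k=1$ term $f(u/\sqrt n)\,\overline\varphi_{n-1}(u/\sqrt n)$ tends to $0$. To make this quantitative with an $o(1/x)$ error I would split the frequency range at some $|u|=U_n\to\infty$. In the central range $s=u/\sqrt n$ is small, so $f(s)^{k}=e^{-ks^{2}/2}\bigl(1+\myo(k|s|^{3})\bigr)$, while for $j\ge1$ one has $\overline\varphi_j(s)=is\,m_j+\myo\bigl(s^{2}\!\int_{-\infty}^0 x^{2}\,d\overline F_j\bigr)$ with $m_j:=\int_{-\infty}^0|x|\,d\overline F_j(x)=\ee[\,|\overline S_j|;\overline S_j<0\,]$; here the Sparre~Andersen/Spitzer asymptotics for the descending-ladder structure of the walk supply $m_j\sim c_0\,j^{-1/2}$ (with $c_0=1/\sqrt{2\pi}$). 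Feeding these in, the terms $2\le k\le n-1$ form, after the substitution $k=u'n$, a Riemann sum converging to $ic_0 u\int_0^1(1-u')^{-1/2}e^{-u'u^{2}/2}\,du'$, which a short contour computation identifies with the ``Dawson part'' $i\sqrt{2/\pi}\,e^{-u^{2}/2}\!\int_0^{u}e^{s^{2}/2}\,ds$ of $\widehat{\varphi_+}(u)$, while the single term $k=n$ (where $\overline\varphi_0=1$) supplies the remaining summand $e^{-u^{2}/2}$. In the tail range I would use that the $X_j$ have a density, so $\sup_{|s|\ge\delta}|f(s)|<1$, together with the elementary bound $|\overline\varphi_j(s)|\le|s|\,m_j\le C|s|j^{-1/2}$, which leads to $|\overline g_n(u/\sqrt n)|\le Ce^{-cu^{2}}+C/|u|$, matching the decay $|\widehat{\varphi_+}(u)|\le C/|u|$ (a reflection of the jump of $\varphi_+$ at the origin) and making this range negligible once the oscillation of $e^{-iux}$ is exploited. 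Finally, integrating by parts once in $u$ (via $xe^{-iux}=i\tfrac{d}{du}e^{-iux}$) turns the desired estimate into $\bigl|x\bigl(\tfrac{1}{2\pi}\!\int e^{-iux}\overline g_n(u/\sqrt n)\,du-\varphi_+(x)\bigr)\bigr|\le\varepsilon_n$ with $\varepsilon_n\to0$, i.e.\ the uniform $o(1/x)$.

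The main obstacle is making the central-range expansion of the factors $\overline\varphi_j(u/\sqrt n)$ --- which encode the minimum functional and the descending-ladder renewal measure of the walk --- sharp and uniform enough, in both $k$ and $u$, that it survives the single $u$-differentiation forced by the $1/x$ weighting, including a careful treatment of the intermediate frequencies, where neither the Edgeworth expansion for $f(s)^k$ nor the crudest bound for $\overline g_n$ is by itself adequate. This is precisely the technical core of the Fourier-analytic method of Nagaev \cite{Nagaev:69,Nagaev:70,Nagaev:70a} and Aleshkyavichene \cite{Alesh:77}, and is where essentially all of the work lies; a secondary point is the $o(1/x)$ bound for the correlation term above, which needs the moment hypothesis together with a truncation of $p$. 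No information-theoretic input enters at any stage.
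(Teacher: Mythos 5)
The proposal follows the same Fourier-analytic route that the paper (via Aleshkyavichene \cite{Alesh:77} and the proof of Proposition~\ref{prop:local}\,(a)) uses: start from Nagaev's identity \eqref{eq:nagaev}, peel off the delta-part of the $k=1$ term to manufacture the correction $\overline{F}_{n-1}(0)\sqrt{n}\,p(\sqrt{n}x)$, Fourier-invert the remainder, split frequencies, and pay for the $1/x$ weight with one integration by parts. Your contour computation identifying the ``Dawson part'' with $\widehat{\varphi_+}$ is correct, and your decomposition of the $k=1$ term is the right one. The organization is nonetheless somewhat different: you compare $\sum_{k}f^k\overline\varphi_{n-k}$ to $\widehat{\varphi_+}$ by a Riemann-sum heuristic plus a separate identification of the limit, whereas the paper uses the exact identity \eqref{eq:transform}, which expresses $\widehat{\varphi_+}(t)$ as $e^{-t^2/2}$ plus an integral that is a continuous analogue of the discrete sum, so that the discrete-vs-continuous comparison becomes a single, cleanly boundable error term ($I_2$) rather than a limiting argument to be made uniform afterwards. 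That is what lets the paper organize everything into $I_1,\dots,I_5$ and estimate each directly.

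Two specific points where your sketch, as stated, would need repair. First, you invoke $f(s)^k=e^{-ks^2/2}\bigl(1+\myo(k|s|^3)\bigr)$, which presupposes a finite third moment; the theorem assumes only $\ee X_1^2<\infty$, and the paper replaces this by the uniform Bhattacharya--Rao estimate \eqref{eq:fk-all}, which has the form $|f^k(t/\sqrt n)-e^{-kt^2/2n}|\le\delta_k (k/n)^{j/2}e^{-kt^2/4n}$ with $\delta_k\to0$ but no rate. Second, and more centrally, the proposal is explicitly a roadmap: you correctly locate the technical core --- making the expansions of $\overline\varphi_j$ and $f^k$ uniform in $k$ and $u$, surviving the single $u$-differentiation, and handling the intermediate frequencies --- but you do not execute it, and that uniformity is essentially the entire content of the theorem. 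So while the plan is sound and conceptually the same as the paper's, it is not yet a proof; the paper's identity-based decomposition is precisely the device that turns the sketch into one.
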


\begin{remark*}
In \cite{Alesh:77} Theorem~\ref{thm:Alesh} is stated somewhat differently
(for any $x_0 > 0$, the~last~term is of order $o(1)$ uniformly in $x > x_0$),
but a careful analysis of the~proof
shows that after some minor modifications (similar to~those 
in the~proof of part~(a) of Proposition \ref{prop:local} below),
it also yields the result stated above.
\end{remark*}

In the general case, the definition of the signed densities $r_n(x)$ 
is more complicated, see Equation \eqref{eq:rndef} below.

\pagebreak[2]

\begin{proof}[Proof of Sufficiency in Theorem \ref{thm:ECLT}]
Suppose that \eqref{eq:ECLT-2} holds.
Recall that $\overline{p}_n^*(x)$ 
is the density of $\overline{S}_n/\sqrt{n}$
(\ie with the proper~rescaling),
and $\varphi_+(x) = \sqrt{2/\pi} \, e^{-x^2/2}$ ($x > 0$).
Using \eqref{eq:osn}, it is easy to~see that
\begin{align}
\label{eq:simplify}
D_+(\overline{S}_n/\sqrt{n} \,|\, |Z|) \to 0
\quad\text{if and only if}\quad
D(\overline{p}_n^* \,|\, \varphi_+) \to 0 \,.
\end{align}
Indeed, since $\overline{S}_n/\sqrt{n}$ conditioned to be positive
has the density $\overline{p}_n^*(x)/(1-\overline{F}_n(0))$ ($x > 0$),
it follows from our definitions and Lemma \ref{lemma:scalar} that
$$
  D(\overline{p}_n^* \,|\, \varphi_+)
= (1-\overline{F}_n(0)) \, D_+(\overline{S}_n/\sqrt{n} \,|\, |Z|) + L(1-\overline{F}_n(0)) \,,
$$
so that \eqref{eq:simplify} follows from \eqref{eq:osn}.

\pagebreak[2]

Since $D_+(\overline{S}_n/\sqrt{n} \,|\, |Z|) \geq 0$,
it also follows from the preceding argument that
$$
\liminf_{n \to \infty} D(\overline{p}_n^* \,|\, \varphi_+) \geq 0 \,.
$$
Thus, it remains to show that
$$
\limsup_{n \to \infty} D(\overline{p}_n^* \,|\, \varphi_+) \leq 0 \,.
$$

Recall that $\overline{q}_n^*(x) := \sqrt{n} \, \overline{q}_n(\sqrt{n} \, x)$,
where $\overline{q}_n$ is defined in \eqref{eq:qrdef}.
By~Lemma~\ref{lemma:entropy-q}\,(c),
it is sufficient to~show that 
$$
\limsup_{n \to \infty} D((\overline{q}_n^*)^+ \,|\, \varphi_+) \leq 0 \,.
$$
Fix $\varepsilon_0 > 0$, and let $C$ and $c$ 
be positive real numbers with $0 < c < 1 < C < \infty$.
(The precise choices will be specified below.)
Then
$$
D((\overline{q}_n^*)^+ \,|\, \varphi_+) = \int_0^\infty L\left(\frac{(\overline{q}_n^*)^+(x)}{\varphi_+(x)}\right) \varphi_+(x) \, dx = E_1 + E_2 + E_3 \,,
$$
where $E_1,E_2,E_3$ denote the integrals over the intervals
$(0,c)$, $(c,C)$, $(C,\infty)$, respectively.
(Note that $E_1,E_2,E_3$ implicitly depend on $n$.)
To complete the proof, we will show
that if $C \in (1,\infty)$ is sufficiently large
and $c \in (0,1)$ is sufficiently small, \linebreak[2]
then, for each $j \in \{ 1,2,3 \}$, 
$E_j \leq \varepsilon_0$ for all sufficient\-ly~large $n \in \mynat$.

\pagebreak[2]
\medskip

\textbf{Estimating $\pmb{E_3}$.}
By Proposition \ref{prop:local}\,(a), there exists a constant $M > 1$ 
\linebreak (not depending on $n$) such~that for $n \geq n_0$ and 
$x \geq 1$, $|\overline{q}_n^*(x)| \leq M$. It follows that
$$
     E_3
\leq \int_{C}^{\infty} |\overline{q}_n^*(x)| (\log M + \tfrac12 \log \tfrac\pi2 + \tfrac12 x^2) \, dx
\leq C_1 \int_{C}^\infty x^2 |\overline{q}_n^*(x)| \, dx \,,
$$
where $C_1$ is a constant depending only on $M$.
By Proposition \ref{prop:intlimit}, there exists a~constant $C > 1$ 
such that
\begin{align*}
\int_{C}^\infty x^2 |\overline{p}_n^*(x)| \, dx < \varepsilon_0/C_1
\end{align*}
for all sufficiently large $n \in \mynat$.
By Lemma \ref{lemma:entropy-q}\,(b), this implies
\begin{align*}
\int_{C}^\infty x^2 |\overline{q}_n^*(x)| \, dx < \varepsilon_0/C_1
\end{align*}
for all sufficiently large $n \in \mynat$.
Thus, for $C$ sufficiently large, we have $E_3 \leq \varepsilon_0$ 
for~all sufficiently large $n \in \mynat$.

\pagebreak[2]
\medskip

\textbf{Estimating $\pmb{E_1}$.}
Suppose that $c \in (0,e^{-1})$.
Setting 
$$
v_n(x) := \frac{(\overline{q}_n^*)^+(x) - \varphi_+(x)}{\varphi_+(x)} \qquad (x > 0)
$$
and using that $L(y) \leq 0$ for $y \in [0,1]$
and $L(1+y) \leq y + \tfrac12 y^2$ for $y \in (0,\infty)$, 
we~get
$$
     E_1
   = \int_0^c L \left( 1+v_n(x) \right) \varphi_+(x) \, dx
\leq \int_0^c \, \left(|v_n(x)| + \tfrac12|v_n(x)|^2\right) \varphi_+(x) \, dx \,.
$$
Using Proposition~\ref{prop:local}\,(b), it follows that
\begin{align*}
      E_1
&\leq \int_{0}^{c} |\overline{q}_n^*(x) - \varphi_+(x)| + \tfrac12|\overline{q}_n^*(x) - \varphi_+(x)|^2 / \varphi_+(x) \, dx \\
&\leq C_2 \bigg( \int_{0}^{c} |r_n(x)| \, dx + \int_{0}^{c} (\log n \wedge \tfrac{1}{\sqrt{n} x}) \, dx + \int_{0}^{c} (\log x^{-1}) \, dx \bigg) \\
&\qquad \,+\, C_3 \bigg( \int_{0}^{c} |r_n(x)|^2 \, dx + \int_{0}^{c} (\log n \wedge \tfrac{1}{\sqrt{n} x})^2 \, dx + \int_{0}^{c} (\log x^{-1})^2 \, dx \bigg) \,.
\end{align*}
By Cauchy-Schwarz inequality, it remains to control
the integrals in the last line.
Now, for any fixed $c \in (0,e^{-1})$, we have
$$
\int_0^c |r_n(x)|^2 \, dx \leq \|r_{n}\|_1 \|r_{n}\|_\infty = o(1) \,,
$$
$$
\int_0^c (\log n \wedge \tfrac{1}{\sqrt{n} x})^2 \, dx = \frac{\log n}{\sqrt{n}} + \frac{1}{n} (-c^{-1} + \sqrt{n} \log n) = o(1) \,,
$$
$$
\int_0^c (\log x^{-1})^2 \, dx = \int_{\log(1/c)}^{\infty} y^2 e^{-y} \, dy < \infty \,.
$$
Thus, for $c$ sufficiently small, we have $E_1 \leq \varepsilon_0$ 
for all sufficiently large $n \in \mynat$.

\pagebreak[2]
\medskip

\textbf{Estimating $\pmb{E_2}$.}
Let $C \in (1,\infty)$ and $c \in (0,1)$ be the constants fixed above.
The same argument as for $E_1^+$ yields
$$
     E_2
   = \int_c^C L \left( 1+v_n(x) \right) \varphi_+(x) \, dx
\leq \int_c^C \, \left(|v_n(x)| + \tfrac12|v_n(x)|^2\right) \varphi_+(x) \, dx \,.
$$
Using Proposition~\ref{prop:local}\,(a), it follows that
\begin{align*}
      E_2
&\leq \int_{c}^{C} |\overline{q}_n^*(x) - \varphi_+(x)| + \tfrac12|\overline{q}_n^*(x) - \varphi_+(x)|^2 / \varphi_+(x) \, dx \\
&\leq C_4 \bigg( \int_{c}^{C} |r_n(x)| \, dx + o(1) \int_{c}^{C} x^{-1} \, dx \bigg) \\&\qquad \,+\, C_5 \exp(C^2/2) \bigg( \int_{c}^{C} |r_n(x)|^2 \, dx + o(1) \int_{c}^{C} x^{-2} \, dx \bigg) \\ 
&\leq C_4 \bigg( \|r_n\|_1 + o(1) (\log C - \log c) \bigg) \\&\qquad \,+\, C_5 \exp(C^2/2) \bigg( \|r_n\|_1 \cdot \|r_n\|_\infty + o(1) (c^{-1} - C^{-1}) \bigg) \,.
\end{align*}
Thus, $E_2^+ = o(1)$ as $n \to \infty$.

\medskip

This completes the proof of sufficiency in Theorem \ref{thm:ECLT}.
\end{proof}

\pagebreak[2]
\medskip

\section{Some Auxiliary Results}
\label{sec:aux}

Let us collect some results from the literature
which will be needed for the proofs of Propositions
\ref{prop:intlimit} and \ref{prop:local}.

Let $\overline{a}_k := \int_{-\infty}^{0} x \, d\overline{F}_k(x)$
and $\overline{b}_k := \int_{-\infty}^{0} x^2 \, d\overline{F}_k(x)$,
$k \geq 1$. It is known that under our standing moment assumptions
the functions $\overline\varphi_k(t)$ introduced in~\eqref{eq:varphidef}
satisfy the following estimates:
\begin{align}
\label{eq:phi-1}
|\overline\varphi_k(t)| &\leq 2\overline{F}_k(0) \,,
\\
\label{eq:phi-2}
|\overline\varphi_k(t)| &\leq |\overline{a}_k||t| \,,
\\
\label{eq:phi-3}
|\overline\varphi_k'(t)| &\leq |\overline{a}_k| \,,
\\
\label{eq:phi-4}
|\overline\varphi_k(t) - (-it\overline{a}_k)| &\leq \tfrac12 |\overline{b}_k||t|^2 \,,
\\
\label{eq:phi-5}
|\overline\varphi_k'(t) - (-i\overline{a}_k)| &\leq |\overline{b}_k||t| \,,
\\
\label{eq:phi-6}
|\overline\varphi_k''(t)| &\leq |\overline{b}_k| \,,
\end{align}
(see \eg \cite[Equations (26) and (46)]{Alesh:77}),
where 
\begin{align}
\label{eq:fzero}
\overline{F}_k(0) = \myo(k^{-1/2})
\end{align}
(see \eg \cite[Equation (39)]{Alesh:77}),
\begin{align}
\label{eq:ab}
\overline{a}_k = - (2\pi k)^{-1/2} + o(k^{-1/2})
\quad\text{and}\quad
\overline{b}_k = o(1)
\end{align}
(see \eg \cite[Equation (1)]{Alesh:77}).
Let us note that the implicit constants 
may depend on the distribution of $X_1$.

\pagebreak[2]

Furthermore, we need the following classical approximations
for characteristic functions of sums of \iid random variables
and their derivatives:

Given \iid random variables $X_1,X_2,X_3,\hdots$ with 
mean~$0$, variance~$1$, density~$p$ and characteristic function $f$,
there exist positive real numbers $\gamma,\delta_1,\delta_2,\delta_3, \linebreak[1] \dots$ 
(depending on the distribution of $X_1$)
with $\lim_{n \to \infty} \delta_n = 0$ 
such that for $n \in \mynat$, $|t| \leq \gamma n^{1/2}$ and $j = 0,1,2$,
\begin{align*}
\left|\tfrac{d^j}{dt^j} (f^n(t/\sqrt{n}) - e^{-t^2/2})\right| &\leq \delta_n \, e^{-t^2/4} \,.
\end{align*}
See \eg \cite[Theorem 9.12]{BR}. 
Replacing $n$ with $k$ and $t$ with $t \sqrt{k/n}$ in this estimate, 
we~obtain, for $1 \leq k \leq n$, $|t| \leq \gamma n^{1/2}$ and $j = 0,1,2$,
\begin{align}
\label{eq:fk-all}
\left|\tfrac{d^j}{dt^j}(f^k(t/\sqrt{n}) - e^{-kt^2/2n})\right| &\leq \delta_k \, (k/n)^{j/2} e^{-kt^2/4n} \,.
\end{align}
Furthermore, let $\eta \in (0,1)$ be a constant 
such that 
\begin{align}
\label{eq:RL}
|t| \geq \gamma \quad \Rightarrow \quad |f(t)| \leq \eta \,.
\end{align}
Such a constant $\eta$ exists because $X_1$ has a density,
which implies that $|f(t)| < 1$ for~all $t \ne 0$
as well as $\lim_{|t| \to \infty} |f(t)| = 0$
(by the Riemann-Lebesgue lemma).

\pagebreak[4]  

Besides that, we will repeatedly use the fact that
for any $\alpha > 0$ and $n \geq k \geq 1$,
\begin{equation}
\label{eq:GF}
\sup_{t \in \myreal} \, (kt^2/n)^{\alpha/2} \, e^{-kt^2/4n}
= \myo_\alpha(1)
\end{equation}
and
\begin{equation}
\label{eq:GI}
\int_{-\infty}^{+\infty} (kt^2/n)^{\alpha/2} \, e^{-kt^2/4n} \, dt
= \myo_\alpha(\sqrt{\tfrac{n}{k}}) \,,
\end{equation}
with implicit constants depending only on $\alpha$.

In addition to that, we will use the following (well-known) Gaussian tail bounds:
For any $\alpha > 0$ and $t > 0$ we have
\begin{align}
\label{eq:gtb-0}
     \int_t^\infty e^{-\alpha x^2/2} \, dx
&\leq \sqrt{\tfrac{\pi}{2\alpha}} \wedge \left( \tfrac{1}{\alpha t} e^{-\alpha t^2/2} \right) \,,
\\
\label{eq:gtb-1}
   \int_t^\infty \sqrt{\alpha} x e^{-\alpha x^2/2} \, dx
&= \tfrac{1}{\sqrt{\alpha}} e^{-\alpha t^2/2} \,,
\\
\label{eq:gtb-2}
     \int_t^\infty \alpha x^2 e^{-\alpha x^2/2} \, dx
&\leq \sqrt{\tfrac{\pi}{2\alpha}} \wedge \left( \tfrac{1}{\alpha t} (\alpha t^2 + 1) e^{-\alpha t^2/2} \right) \,.
\end{align}

Moreover, we will repeatedly use the fact that
\begin{align}
\label{eq:betasum-1}
  \sum_{k=1}^{n-1} \frac{1}{\sqrt{k(n\!-\!k)}}
= \myo\left( \frac{1}{\sqrt{n}} \sum_{1 \leq k \leq n/2} \frac{1}{\sqrt{k}} \right) + \myo\left( \frac{1}{\sqrt{n}} \sum_{n/2 \leq k \leq n-1} \frac{1}{\sqrt{n\!-\!k}} \right)
= \myo(1) \,.
\end{align}
A similar decomposition shows that if $(t_n)_{n \in \mynat}$ 
is a sequence of real numbers with $\lim_{n \to \infty} t_n = 0$, 
we have
\begin{align}
\label{eq:betasum-2}
\sum_{k=1}^{n-1} \frac{t_k}{\sqrt{k(n-k)}} = o(1) \,.
\end{align}

Finally, we will need the observation that the Fourier transform 
of the density $\varphi_+(x) := \sqrt{2/\pi} e^{-x^2/2}$ 
($x > 0$) satisfies
\begin{align}
\label{eq:transform}
   \hat\varphi_+(t) 
&= e^{-t^2/2} + \frac{it}{\sqrt{2\pi n}} \int_0^n e^{-ut^2/2n} \, \frac{du}{\sqrt{n-u}}
\end{align}
for all $n \in \mynat$
(see \cite[page 452]{Alesh:77}).
It follows from this that for any $x > 0$,
\begin{align}
\varphi_+(x) = \frac{1}{2\pi} \lim_{R \to \infty} \int_{-R}^{+R} e^{-itx} \left[ e^{-t^2/2} + \frac{it}{\sqrt{2\pi n}} \int_0^n e^{-ut^2/2n} \, \frac{du}{\sqrt{n-u}} \right] \, dt 
\label{eq:invtransform}
\end{align}
(see \cite[page 452]{Alesh:77}).

\pagebreak[2]
\medskip
\section{Proof of Proposition \ref{prop:intlimit}}

Proposition \ref{prop:intlimit} will be deduced from the following result:

\begin{proposition}
\label{prop:charfunc}
For $k=0,1,2,$ we have
$$
\frac{d^k}{dt^k} \Big[ \ee(e^{it\overline{S}_n/\sqrt{n}}) - \hat\varphi_+(t) \Big] = o(1)
$$
as $n \to \infty$, uniformly in $|t| \leq \gamma n^{1/2}$.
\end{proposition}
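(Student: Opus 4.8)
The plan is to start from Nagaev's formula \eqref{eq:nagaev}, which gives
$\ee e^{it\overline{S}_n} = \sum_{k=1}^n f^k(t)\,\overline\varphi_{n-k}(t)$,
and rescale: replacing $t$ by $t/\sqrt{n}$ yields
$\ee e^{it\overline{S}_n/\sqrt{n}} = \sum_{k=1}^n f^k(t/\sqrt{n})\,\overline\varphi_{n-k}(t/\sqrt{n})$.
The target \eqref{eq:transform} expresses $\hat\varphi_+(t)$ as
$e^{-t^2/2} + \frac{it}{\sqrt{2\pi n}}\int_0^n e^{-ut^2/2n}\,(n-u)^{-1/2}\,du$,
so I would split the $k$-sum at $k=n$: the term $k=n$ contributes $f^n(t/\sqrt{n})$, which by \eqref{eq:fk-all} (with $j=0,1,2$) is $e^{-t^2/2}$ up to an error $\myo(\delta_n e^{-t^2/4})=o(1)$ uniformly in $|t|\le\gamma\sqrt{n}$ (and likewise for the first two $t$-derivatives). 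For $1\le k\le n-1$ the summand is $f^k(t/\sqrt{n})\,\overline\varphi_{n-k}(t/\sqrt{n})$, and using $\overline{a}_{n-k}=-(2\pi(n-k))^{-1/2}+o((n-k)^{-1/2})$ from \eqref{eq:ab} together with the bound \eqref{eq:phi-4}, I would replace $\overline\varphi_{n-k}(t/\sqrt{n})$ by $-i(t/\sqrt{n})\,\overline{a}_{n-k}= \frac{it}{\sqrt{n}}\,(2\pi(n-k))^{-1/2}+\text{error}$ and $f^k(t/\sqrt{n})$ by $e^{-kt^2/2n}$. Doing the Riemann-sum approximation $\frac1n\sum_{k=1}^{n-1} e^{-kt^2/2n}(1-k/n)^{-1/2}\approx\int_0^1 e^{-vt^2/2}(1-v)^{-1/2}\,dv$ (substitute $u=nv$) recovers exactly the integral term in \eqref{eq:transform}. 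So the core identity to be proved is that the difference of the rescaled Nagaev sum and this integral representation is $o(1)$ in $C^2$ uniformly on $|t|\le\gamma\sqrt n$.

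The bookkeeping splits into several error contributions, each to be estimated uniformly in $|t|\le\gamma\sqrt n$ and for $j=0,1,2$ derivatives in $t$. First, the error from replacing $f^k(t/\sqrt n)$ by $e^{-kt^2/2n}$: by \eqref{eq:fk-all} this is bounded by $\delta_k (k/n)^{j/2} e^{-kt^2/4n}$ on the relevant range, and when multiplied by $|\overline\varphi_{n-k}(t/\sqrt n)|\le|\overline a_{n-k}||t/\sqrt n|\le C(n-k)^{-1/2}|t|/\sqrt n$ from \eqref{eq:phi-2}, summed over $k$, and combined with \eqref{eq:GF}/\eqref{eq:GI} to absorb the powers of $t$, gives $\myo\big(\frac1{\sqrt n}\sum_{k=1}^{n-1}\frac{\delta_k}{\sqrt{k(n-k)}}\cdot\sqrt{\tfrac nk}\big)$-type bounds which are $o(1)$ by the $\delta_k\to0$ version \eqref{eq:betasum-2} (after reorganising the sums as in \eqref{eq:betasum-1}). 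Second, the error from replacing $\overline\varphi_{n-k}(t/\sqrt n)$ by its linear term $-i(t/\sqrt n)\overline a_{n-k}$: by \eqref{eq:phi-4}–\eqref{eq:phi-6} this costs $\tfrac12|\overline b_{n-k}|(t/\sqrt n)^2$ and derivatives thereof, and since $\overline b_{n-k}=o(1)$ by \eqref{eq:ab}, together with the Gaussian factor $e^{-kt^2/2n}$ from $f^k$ and \eqref{eq:GF}/\eqref{eq:GI}, this again sums to $o(1)$ via \eqref{eq:betasum-2}. Third, the error from $\overline a_{n-k}=-(2\pi(n-k))^{-1/2}+o((n-k)^{-1/2})$ is handled the same way. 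Fourth, the Riemann-sum-to-integral error: the function $v\mapsto e^{-vt^2/2}(1-v)^{-1/2}$ on $[0,1-1/n]$ has a controllable modulus of continuity away from $v=1$, and the contribution of $v\in[1-1/n,1]$ to the integral is $\myo(n^{-1/2})$ uniformly in $t$ since the Gaussian factor is bounded; differentiating in $t$ brings down factors of $t$ that are again absorbed using $|t|\le\gamma\sqrt n$ and the Gaussian decay. Throughout, the differentiation in $t$ only introduces extra polynomial factors in $t$ and extra factors of $k/n$ or $1/\sqrt n$, all of which are handled by the same \eqref{eq:GF}/\eqref{eq:GI}/\eqref{eq:betasum-2} machinery, so the $j=1,2$ cases are morally identical to $j=0$.

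The main obstacle, I expect, is the uniformity in $t$ up to the moving endpoint $\gamma\sqrt n$ combined with the derivative estimates: near $|t|\approx\gamma\sqrt n$ the Gaussian factors $e^{-kt^2/4n}$ are small only when $k$ is comparable to $n$, so for small $k$ one must instead exploit the $(n-k)^{-1/2}$ decay of $|\overline\varphi_{n-k}|$ and the fact that the sum has few terms contributing, i.e. one needs to split the $k$-sum (say at $k=n/2$) and argue differently in the two ranges, exactly as in \eqref{eq:betasum-1}. Coupling this range-splitting with the second-derivative bounds — where $\tfrac{d^2}{dt^2}$ acting on $f^k(t/\sqrt n)$ can produce a factor as large as $(k/n)\cdot(t)^2 \lesssim k$, which must be beaten by the decay — is the delicate part; the saving grace is \eqref{eq:fk-all}, which already packages the derivative of $f^k(t/\sqrt n)-e^{-kt^2/2n}$ with the correct $(k/n)^{j/2}e^{-kt^2/4n}$ weight, so that the powers of $t$ are converted to $\myo_\alpha(1)$ by \eqref{eq:GF}. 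Once these estimates are assembled, Proposition \ref{prop:charfunc} follows by the triangle inequality, summing the finitely many error types. I would organise the write-up by first stating the rescaled Nagaev identity, then introducing the three successive approximations ($f^k\to e^{-kt^2/2n}$, $\overline\varphi\to$ linear term, sum$\to$integral), and then estimating each of the resulting error terms in turn for $j=0,1,2$.
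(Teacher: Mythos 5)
Your proposal is correct and follows essentially the same route as the paper: start from Nagaev's rescaled formula, pull off the $k=n$ term, successively replace $f^k$ by $e^{-kt^2/2n}$ via \eqref{eq:fk-all}, linearize $\overline\varphi_{n-k}$ via \eqref{eq:phi-4}--\eqref{eq:phi-6}, replace $\overline a_{n-k}$ by $-(2\pi(n-k))^{-1/2}$ via \eqref{eq:ab}, compare the resulting sum to the integral in \eqref{eq:transform}, and absorb the $t$-derivative factors through \eqref{eq:GF}, \eqref{eq:GI} and \eqref{eq:betasum-1}--\eqref{eq:betasum-2}. The only cosmetic difference is that the paper's decomposition (into $D_1,\dots,D_6$) separately isolates the boundary terms $k=1,2$ as $D_6$ rather than folding them into the main sums, but that is a bookkeeping choice and does not change the estimates.
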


\begin{remarks} \

(a) The Erd{\H{o}}s-Kac theorem is equivalent to the statement that 
$\ee(e^{it\overline{S}_n/\sqrt{n}}) \to \hat\varphi_+(t)$
for any fixed $t \in \myreal$.
Thus, this theorem follows from Proposition \ref{prop:charfunc}.
Let us emphasize that we do not need the existence of densities in this section.

(b) For our ``application'' 
(namely the proof of Proposition \ref{prop:intlimit}),
the result for the second derivative is relevant.
Indeed, for this application, it~would~be be sufficient to prove 
Proposition \ref{prop:charfunc} for $t = \myo(1)$.
\end{remarks}

\begin{proof}[Proof of Proposition \ref{prop:charfunc}]
Similarly as in \cite{Alesh:77,Naud:77},
using \eqref{eq:nagaev}~and~\eqref{eq:transform},
we have the following decomposition:
\begin{align*}
   \ee(e^{it\overline{S}_n/\sqrt{n}}) - \hat\varphi_+(t)
&= \bigg[ f^n(t/\sqrt{n}) - e^{-t^2/2} \bigg] \\
&+ \bigg[ \frac{it}{\sqrt{2\pi n}} \bigg( \sum_{k=3}^{n-1} e^{-kt^2/2n} \frac{1}{\sqrt{n-k}} - \int_0^n e^{-ut^2/2n} \frac{du}{\sqrt{n-u}} \bigg) \bigg] \\
&+ \bigg[ \sum_{k=3}^{n-1} \Big( f^k(t/\sqrt{n}) - e^{-kt^2/2n} \Big) \overline\varphi_{n-k}(t/\sqrt{n}) \bigg] \\
&+ \bigg[ \sum_{k=3}^{n-1} e^{-kt^2/2n} \Big( \overline\varphi_{n-k}(t/\sqrt{n}) - (-\overline{a}_{n-k}) \, it/\sqrt{n} \Big) \bigg] \\
&+ \bigg[ \sum_{k=3}^{n-1} e^{-kt^2/2n} \Big( (-\overline{a}_{n-k}) - \frac{1}{\sqrt{2\pi(n\!-\!k)}} \Big) \, it/\sqrt{n} \bigg] \\
&+ \bigg[ f^2(t/\sqrt{n}) \overline\varphi_{n-2}(t/\sqrt{n}) + f(t/\sqrt{n}) \overline\varphi_{n-1}(t/\sqrt{n}) \bigg] \,.
\end{align*}
Denote the expressions in the square brackets by $D_1(t),\hdots,D_6(t)$.
(Note that all these expressions implicitly depend on $n$.)
We will show that for $j=1,\hdots,6$, uniformly in $|t| \leq \gamma n^{1/2}$, \ 
$D_j(t),D_j'(t),D_j''(t) \to 0$ \ as $n \to \infty$.

\smallskip

\emph{Convention:}
We always assume that $n \geq 4$ and $|t| \leq \gamma n^{1/2}$.
$\myo$- and $o$-\linebreak[2]bounds hold uniformly in this region
(unless otherwise mentioned), and they may depend on the constants 
$\gamma,\delta_1,\delta_2,\delta_3,\dots$ 
introduced in Section \ref{sec:aux}.

\medskip
\myparagraph{On the Difference $\pmb{D_1}$}
For the difference $D_1(t)$ and its first two derivatives, 
the~claim is immediate from \eqref{eq:fk-all} (with $k = n$).

\medskip
\myparagraph{On the Difference $\pmb{D_2}$}
For fixed $n \in \mynat$, $t \in \myreal$ and $\beta \in \{ 0,1,2,\hdots \}$, put
$$
h_{\beta}(u) := (u/n)^\beta \, e^{-ut^2/2n} \, \tfrac{1}{\sqrt{n-u}} \qquad (0<u<n) \,.
$$
Then, for $1 \leq v \leq w \leq n-1$, we have
\begin{multline*}
\left| h_{\beta}(w) - h_{\beta}(v) \right|
=
\left| \int_v^w h'_{\beta}(u) \, du \right|
=
\left| \int_v^w \left( \tfrac{\beta}{u} - \tfrac{t^2}{2n} + \tfrac{1}{2(n-u)} \right)  h_\beta(u) \, du \right| \\[+3pt]
\leq
(w-v) \left( \tfrac{\beta}{v} + \tfrac{t^2}{2n} + \tfrac{1}{2(n-w)} \right) (w/n)^\beta \, e^{-vt^2/2n} \, \tfrac{1}{\sqrt{n-w}} \,.
\end{multline*}
Hence, for the difference $D_2(t)$, we get (using the above estimate with $\beta = 0$)
\begin{align*}
&\eskip \left| \frac{it}{\sqrt{2\pi n}} \bigg( \sum_{k=3}^{n-1} e^{-kt^2/2n} \frac{1}{\sqrt{n-k}} - \int_0^n e^{-ut^2/2n} \frac{du}{\sqrt{n-u}} \bigg) \right| \\ 
&\leq \frac{|t|}{\sqrt{2\pi n}} \sum_{k=3}^{n-2} \left| \int_{k}^{k+1} e^{-kt^2/2n} \tfrac{1}{\sqrt{n-k}} - e^{-ut^2/2n} \tfrac{1}{\sqrt{n-u}} \, du \right| + \myo(n^{-1/2}) \\
&\leq \frac{|t|}{\sqrt{2\pi n}} \sum_{k=3}^{n-2} \left( \tfrac{1}{(n-k-1)^{3/2}} e^{-kt^2/2n} + \tfrac{1}{(n-k-1)^{1/2}} \tfrac{t^2}{2n} e^{-kt^2/2n} \right) + \myo(n^{-1/2}) \\
&= \myo \bigg( \sum_{k=3}^{n-2} \left( \tfrac{1}{k^{1/2} (n-k-1)^{3/2}} + \tfrac{1}{k^{3/2} (n-k-1)^{1/2}} \right) \bigg) + \myo(n^{-1/2})
= \myo(n^{-1/2}) \,.
\end{align*}
Here we have used the fact that $(k/n)^{1/2} \, |t| \, e^{-kt^2/2n}$
and $(k/n)^{3/2} \, |t|^3 \, e^{-kt^2/2n}$ are uniformly bounded. 
In particular, this fact is also used in the first step to absorb 
the summand for $k=n-1$ and the integral over $u \in [n-1,n]$
into the $\myo(n^{-1/2})$-term.

Furthermore, similar estimates hold for the first two derivatives of $D_2(t)$.
Indeed, these derivatives are finite linear combinations of expressions
of the form
$$
\frac{it^\alpha}{\sqrt{2\pi n}} \bigg( \sum_{k=3}^{n-1} (k/n)^\beta e^{-kt^2/2n} \frac{1}{\sqrt{n-k}} - \int_0^n (u/n)^\beta e^{-ut^2/2n} \frac{du}{\sqrt{n-u}} \bigg) \\ 
$$
(with $\alpha,\beta \in \{ 0,1,2,3,\hdots \}$ and $\alpha \leq \beta + 1$),
and, by similar arguments as above,
\begin{align*}
&\eskip \left| \frac{it^\alpha}{\sqrt{2\pi n}} \bigg( \sum_{k=3}^{n-1} (k/n)^\beta e^{-kt^2/2n} \frac{1}{\sqrt{n-k}} - \int_0^n (u/n)^\beta e^{-ut^2/2n} \frac{du}{\sqrt{n-u}} \bigg) \right| \\ 
&\leq \frac{|t|^\alpha}{\sqrt{2\pi n}} \sum_{k=3}^{n-2} \left| \int_{k}^{k+1} (k/n)^\beta e^{-kt^2/2n} \tfrac{1}{\sqrt{n-k}} - (u/n)^\beta e^{-ut^2/2n} \tfrac{1}{\sqrt{n-u}} \, du \right| + \myo_\beta(n^{-1/2}) \\
&\leq \frac{|t|^\alpha}{\sqrt{2\pi n}} \sum_{k=3}^{n-2} \left( \tfrac{((k+1)/n)^\beta}{(n-k-1)^{3/2}} e^{-kt^2/2n} + \tfrac{((k+1)/n)^\beta}{(n-k-1)^{1/2}} \tfrac{t^2}{2n} e^{-kt^2/2n} + \tfrac{((k+1)/n)^\beta}{(n-k-1)^{1/2}} \tfrac{\beta}{k} e^{-kt^2/2n} \right) \\&\mskip490mu + \myo_\beta(n^{-1/2}) \\
&= \myo_\beta \bigg( \sum_{k=3}^{n-2} \left( \tfrac{1}{k^{1/2} (n-k-1)^{3/2}} + \tfrac{1}{k^{3/2} (n-k-1)^{1/2}} + \tfrac{1}{k^{3/2} (n-k-1)^{1/2}} \right) \bigg) + \myo_\beta(n^{-1/2}) \\
&= \myo_\beta(n^{-1/2}) \,.
\end{align*}

\medskip
\myparagraph{On the Difference $\pmb{D_3}$}
For the difference $D_3(t)$, the claim follows
from \eqref{eq:fk-all} (with $k < n$),
\eqref{eq:phi-2}, \eqref{eq:ab} and \eqref{eq:betasum-2},
since
\begin{multline*}
\eskip \sum_{k=3}^{n-1} \Big( f^k(t/\sqrt{n}) - e^{-kt^2/2n} \Big) \overline\varphi_{n-k}(t/\sqrt{n}) \\
= \myo\left(\sum_{k=3}^{n-1} \frac{\delta_k \, (k/n)^{1/2} |t| \, e^{-kt^2/4n}}{\sqrt{k(n-k)}}\right)
= \myo\left(\sum_{k=3}^{n-1} \frac{\delta_k}{\sqrt{k(n-k)}}\right)
= o(1) \,.
\end{multline*}
Similar estimates hold for the first two derivatives.
Indeed, using \eqref{eq:fk-all} (with $k < n$),
\eqref{eq:phi-2} -- \eqref{eq:phi-3}, \eqref{eq:phi-6},
\eqref{eq:ab} and \eqref{eq:betasum-2}, we get
\begin{align*}
&\eskip 
   \sum_{k=3}^{n-1} \frac{d}{dt} \bigg[ \Big( f^k(t/\sqrt{n}) - e^{-kt^2/2n} \Big) \overline\varphi_{n-k}(t/\sqrt{n}) \bigg] \\
&= \sum_{k=3}^{n-1} \bigg[ \frac{d}{dt} \Big( f^k(t/\sqrt{n}) - e^{-kt^2/2n} \Big) \overline\varphi_{n-k}(t/\sqrt{n}) \\&\qquad\qquad\qquad\qquad + \Big( f^k(t/\sqrt{n}) - e^{-kt^2/2n} \Big) \overline\varphi_{n-k}'(t/\sqrt{n}) / \sqrt{n} \bigg] \\[+7pt]
&= \myo\left(\sum_{k=3}^{n-1} \bigg[ \frac{\delta_k \, (k/n)^{1/2} |t| \, e^{-kt^2/4n}}{\sqrt{n(n-k)}} + \frac{\delta_k \, e^{-kt^2/4n}}{\sqrt{n(n-k)}} \bigg]\right) \\
&= \myo\left(\sum_{k=3}^{n-1} \bigg[ \frac{\delta_k}{\sqrt{n(n-k)}} + \frac{\delta_k}{\sqrt{n(n-k)}} \bigg]\right)
= o(1)
\end{align*}
as well as
\begin{align*}
&\eskip 
   \sum_{k=3}^{n-1} \frac{d^2}{dt^2} \bigg[ \Big( f^k(t/\sqrt{n}) - e^{-kt^2/2n} \Big) \overline\varphi_{n-k}(t/\sqrt{n}) \bigg] \\
&= \sum_{k=3}^{n-1} \bigg[ \frac{d^2}{dt^2} \Big( f^k(t/\sqrt{n}) - e^{-kt^2/2n} \Big) \overline\varphi_{n-k}(t/\sqrt{n}) \\
    &\qquad\qquad + 2 \frac{d}{dt} \Big( f^k(t/\sqrt{n}) - e^{-kt^2/2n} \Big) \overline\varphi_{n-k}'(t/\sqrt{n})/\sqrt{n} \\[+7pt]
    &\qquad\qquad\qquad\qquad + \Big( f^k(t/\sqrt{n}) - e^{-kt^2/2n} \Big) \overline\varphi_{n-k}''(t/\sqrt{n}) / n \bigg] \\[+7pt]
&= \myo\Bigg(\sum_{k=3}^{n-1} \bigg[ \frac{\delta_k \, (k/n) \, |t| e^{-kt^2/4n} }{\sqrt{n(n-k)}} 
    + \frac{\delta_k \, (k/n)^{1/2} \, e^{-kt^2/4n}}{\sqrt{n(n-k)}} 
    + \frac{\delta_k \, |\overline{b}_{n-k}| \, e^{-kt^2/4n}}{n} \bigg]\Bigg) \\
&= \myo\Bigg(\sum_{k=3}^{n-1} \bigg[ \frac{\delta_k \sqrt{k/n}}{\sqrt{n(n-k)}} + \frac{\delta_k \sqrt{k/n}}{\sqrt{n(n-k)}} + \frac{\delta_k |\overline{b}_{n-k}|}{n} \bigg]\Bigg)
 = o(1) \,.
\end{align*}

\medskip
\myparagraph{On the Difference $\pmb{D_4}$}
Let $(m_n)_{n \in \mynat}$ be a sequence 
of natural numbers such that $\lim_{n \to \infty} m_n = \infty$ 
and $\lim_{n \to \infty} (m_n / n) \to 0$. Then,
by \eqref{eq:phi-4}, \eqref{eq:phi-2} and \eqref{eq:ab}, we~have
\begin{multline*}
     \sum_{k=3}^{n-1} e^{-kt^2/2n} \Big| \overline\varphi_{n-k}(t/\sqrt{n}) - (-\overline{a}_{n-k}) \, it/\sqrt{n} \Big| \\
\leq \sum_{k=3}^{n-m_n} (t^2/n) e^{-kt^2/2n} |\overline{b}_{n-k}| + \sum_{k=n-m_n}^{n-1} 2 (|t|/\sqrt{n}) e^{-kt^2/2n} |\overline{a}_{n-k}| \\
  =  o\left(\sum_{k=3}^{n-m_n} (t^2/n) e^{-kt^2/2n}\right) + \myo\left(\sum_{k=n-m_n}^{n-1} \frac{1}{\sqrt{k(n-k)}}\right) \,.
\end{multline*}
Since $\sum_{k=3}^{\infty} xe^{-kx}$ is uniformly bounded in $x > 0$, 
it~follows that $D_4(t) = o(1)$. \pagebreak[3]

Similar estimates hold for the first two derivatives.
Indeed, to this end, we have to bound, 
among other terms,
$$
\sum_{k=3}^{n-1} e^{-kt^2/2n} \Big( \overline\varphi_{n-k}'(t/\sqrt{n}) / \sqrt{n} - (-\overline{a}_{n-k}) \, i/\sqrt{n} \Big)
$$
and
$$
\sum_{k=3}^{n-1} e^{-kt^2/2n} \Big( \overline\varphi_{n-k}''(t/\sqrt{n}) / n \Big) \,.
$$
(For the other terms we get similar bounds as for lower-order derivatives
but~with extra factors $kt/n$, which are easily controlled due to 
the exponential factor $e^{-kt^2/2n}$.)
But, using \eqref{eq:phi-5}, \eqref{eq:phi-3}, \eqref{eq:phi-6},
and \eqref{eq:ab}, we get
\begin{multline*}
\sum_{k=3}^{n-1} e^{-kt^2/2n} \Big| \overline\varphi_{n-k}'(t/\sqrt{n}) / \sqrt{n} - (-\overline{a}_{n-k}) \, i/\sqrt{n} \Big| \\
\leq \sum_{k=3}^{n-m_n} (|t|/n) e^{-kt^2/2n} |\overline{b}_{n-k}| + \sum_{k=n-m_n}^{n-1} 2 e^{-kt^2/2n} |\overline{a}_{n-k}| / \sqrt{n} \\
  =  o\bigg(\sum_{k=3}^{n-m_n} \frac{t^2+1}{n} e^{-kt^2/2n}\bigg) + \myo\bigg(\sum_{k=n-m_n}^{n-1} \frac{1}{\sqrt{n(n-k)}}\bigg) 
  =  o(1)
\end{multline*}
as well as
$$
     \sum_{k=3}^{n-1} e^{-kt^2/2n} \Big| \overline\varphi_{n-k}''(t/\sqrt{n}) / n \Big|
\leq \sum_{k=3}^{n-1} (1/n) e^{-kt^2/2n} |\overline{b}_{n-k}| 
\leq \frac{1}{n} \sum_{k=3}^{n-1} |\overline{b}_{n-k}|
  =  o(1) \,.
$$

\medskip
\myparagraph{On the Difference $\pmb{D_5}$}
Similarly as above, let $(m_n)_{n \in \mynat}$ be a sequence \linebreak
of natural numbers such that $\lim_{n \to \infty} m_n = \infty$ 
and $\lim_{n \to \infty} (m_n / n) \to 0$.
Then, using \eqref{eq:ab}, we have
\begin{multline*}
     \sum_{k=3}^{n-1} e^{-kt^2/2n} \Big| (-\overline{a}_{n-k}) - \frac{1}{\sqrt{2\pi(n\!-\!k)}} \Big| \, \Big| it/\sqrt{n} \Big| \\ 
  =  o \left( \sum_{k=3}^{n-m_n} \frac{1}{\sqrt{k(n-k)}} \right) + \myo \left( \sum_{k=n-m_n}^{n-1} \frac{1}{\sqrt{k(n-k)}} \right)
  =  o(1) \,.
\end{multline*}
Again, for the derivatives, we have similar estimates involving 
lower powers of $t$ and\,/\,or additional factors $kt/n$.

\medskip
\myparagraph{On the Difference $\pmb{D_6}$}
For fixed $k$, we have 
$$
|(f^k)(t)| = \myo_k(1) \,,
\qquad
|\tfrac{d}{dt} (f^k)(t)| = \myo_k(1) \,,
\qquad
|\tfrac{d^2}{dt^2} (f^k)(t)| = \myo_k(1)
$$
(as follows from our assumption $\ee X_1^2 < \infty$),
and as $n \to \infty$,
$$
\overline\varphi_{n}(t) = o(1) \,,
\qquad
\overline\varphi_{n}'(t) = o(1) \,,
\qquad
\overline\varphi_{n}''(t) = o(1)
$$
(as follows from \eqref{eq:phi-1}, \eqref{eq:phi-3} and \eqref{eq:phi-6} -- \eqref{eq:ab}).
The claim for the difference $D_6(t)$ and its first two derivatives follows immediately 
from these relations.

\medskip
The proof of Proposition \ref{prop:charfunc} is complete now.
\end{proof}

\begin{proof}[Proof of Proposition \ref{prop:intlimit}]
To deduce Proposition \ref{prop:intlimit} from Proposition \ref{prop:charfunc},
we use that if $X$ is a real random variable with $\ee(X^{2k}) < \infty$,
induced distribution $\pp_X$ and characteristic function $f_X$, 
then, for any $T > 0$,
$$
\int_{[-T,+T]^c} x^{2k} \, \pp_X(dx) \leq \frac{T}{2} \int_{-2/T}^{+2/T} (-1)^k (f_X^{(2k)}(0)-f_X^{(2k)}(t)) \, dt \,.
$$
For the convenience of the reader, let us recall the argument:
Using that \linebreak $|\sin a / a| \leq 1$ for $a \in \myreal$
and $|\sin a / a| \leq \tfrac12$ for $|a| \geq 2$, we get
\begin{align*}
\frac{T}{2} \int_{-2/T}^{+2/T} & (-1)^k (f_X^{(2k)}(0)-f_X^{(2k)}(t)) \, dt \\
&= \frac{T}{2} \int_{-2/T}^{+2/T} \int_{-\infty}^{+\infty} x^{2k} (1 - e^{itx}) \, d\pp_X(x) \, dt \\ 
&= \int_{-\infty}^{+\infty} x^{2k} \frac{T}{2} \int_{-2/T}^{+2/T} (1 - e^{itx}) \, dt \, d\pp_X(x) \\ 
&= \int_{-\infty}^{+\infty} x^{2k} \left( 2 - 2 \sin(\tfrac{2x}{T}) / (\tfrac{2x}{T}) \right) \, d\pp_X(x) \\ 
&\geq \int_{[-T,+T]^c} x^{2k} \, d\pp_X(x) \,.
\end{align*}
Applying this inequality with $X = \overline{S}_n/\sqrt{n}$ and $T = C$,
we get 
\begin{align*}
      \int_{C}^{\infty} x^2 \, \overline{p}_n^*(x) \, dx 
&\leq \frac{C}{2} \int_{-2/C}^{+2/C} (-1) \left( f_{\overline{S}_n/\sqrt{n}}''(0) - f_{\overline{S}_n/\sqrt{n}}''(t) \right) \, dt \\
&\leq 2 \sup_{|t| \leq 2/C} \left| f_{\overline{S}_n/\sqrt{n}}''(0) - f_{\overline{S}_n/\sqrt{n}}''(t) \right| \,.
\end{align*}
Using Proposition \ref{prop:charfunc}, it follows that for any fixed $C > 0$,
we have
\begin{align*}
      \int_{C}^{\infty} x^2 \, \overline{p}_n^*(x) \, dx 
&\leq 2 \sup_{|t| \leq 2/C} \left| \hat\varphi_+''(0) - \hat\varphi_+''(t) \right| + o(1) \,.
\end{align*}
as $n \to \infty$.
Since $\hat\varphi_+''(t)$ is continuous at zero,
we may conclude that for $C = C(\varepsilon)$ sufficiently large,
we have
$$
\int_{C}^{\infty} x^2 \overline{p}_n^*(x) \, dx \leq \varepsilon
$$
for all sufficiently large $n \in \mynat$,
and the proof of Proposition \ref{prop:intlimit} is complete.
\end{proof}

\begin{remark*}
Let us outline another proof of Proposition \ref{prop:intlimit}.
This proof is shorter, \linebreak
but it is based on Spitzer's formula 
and the (classical) Erd\H{o}s-Kac theorem \eqref{thm:erdos-kac}.
Also, the preceding proof is more useful in that
it can be modified (under higher-order moment conditions)
to obtain more precise estimates on the rate of decay
in~Proposition \ref{prop:intlimit}.

\pagebreak[2]

We have to~show that the sequence of random variables
$((S_n^+/\sqrt{n})^2)_{n \in \mynat}$ is uniformly integrable.
It~follows from the classical Erd\H{o}s-Kac theorem \eqref{thm:erdos-kac}
that $(S_n^+/\sqrt{n})^2 \Rightarrow |Z|^2$ as~$n \to \infty$.
Now, it is well known \linebreak[2] that
for a family of integrable random variables 
$X,X_1,X_2,X_3,\dots$ with $X_n \Rightarrow X$,
\begin{align}
\label{eq:ui-criterion}
\text{$(X_n)_{n \in \mynat}$ is uniformly integrable if and only if $\ee|X_n| \to \ee|X|$,}
\end{align}
see \eg Lemma 4.11 in \cite{Kallenberg:2002}.
Thus, it remains to show that $\ee(S_n^+/\sqrt{n})^2 \to \ee|Z|^2$ $= 1$ as $n \to \infty$.


Our starting point is Spitzer's formula (see \eg \cite[p.\,618]{Feller-2}), 
which states that for $|s| < 1$ and $t \in \myreal$,
\begin{align}
  \sum_{n=0}^{\infty} s^n \ee(e^{it\overline{S}_n^{+}}) 
= \exp \left( \sum_{k=1}^{\infty} \frac{s^k}{k} \ee(e^{itS_k^{+}}) \right) \,.
\label{eq:spitzer-0}
\end{align}
Differentiating twice with respect to $t$ in \eqref{eq:spitzer-0},
we obtain
\begin{align*}
&\eskip \sum_{n=0}^{\infty} s^n \ee((\overline{S}_n^{+})^2 \, e^{it\overline{S}_n^{+}}) \\
&= \exp \left( \sum_{k=1}^{\infty} \frac{s^k}{k} \ee(e^{itS_k^{+}}) \right) \cdot \left[ \left( \sum_{k=1}^{\infty} \frac{s^k}{k} \ee(S_k^{+} \, e^{itS_k^{+}}) \right)^2 + \sum_{k=1}^{\infty} \frac{s^k}{k} \ee((S_k^{+})^2 \, e^{itS_k^{+}}) \right] \\
&= \sum_{n=0}^{\infty} s^n \ee(e^{it\overline{S}_n^{+}}) \cdot \left[ \left( \sum_{k=1}^{\infty} \frac{s^k}{k} \ee(S_k^{+} \, e^{itS_k^{+}}) \right)^2 + \sum_{k=1}^{\infty} \frac{s^k}{k} \ee((S_k^{+})^2 \, e^{itS_k^{+}}) \right]
\end{align*}
and therefore, setting $t = 0$,
\begin{align*}
   \sum_{n=0}^{\infty} s^n \ee((\overline{S}_n^{+})^2)
&= \sum_{n=0}^{\infty} s^n \cdot \left[ \left( \sum_{k=1}^{\infty} \frac{s^k}{k} \ee(S_k^{+}) \right)^2 + \sum_{k=1}^{\infty} \frac{s^k}{k} \ee((S_k^{+})^2) \right] \,.
\end{align*}
Hence, comparing coefficients, we may conclude that for any $n \geq 1$,
$$
\ee((\overline{S}_n^+)^2) = \sum_{\substack{k \geq 1,l \geq 1\\k+l \leq n}} \frac{1}{k} \ee(S_k^+) \frac{1}{l} \ee(S_l^+) + \sum_{1 \leq k \leq n} \frac{1}{k} \ee((S_k^+)^2) \,.
$$
Now, using the central limit theorem,
the fact that $\ee(S_n/\sqrt{n})^2 \to \ee Z^2 = 1$
and the criterion \eqref{eq:ui-criterion}, 
it is easy to see that
$$
\ee(S_n^+/\sqrt{n}) \to \ee(Z^+) = \frac{1}{\sqrt{2\pi}}
\quad\text{and}\quad
\ee((S_n^+/\sqrt{n})^2) \to \ee((Z^+)^2) = \frac{1}{2}
$$
as $n \to \infty$.
Therefore,
\begin{align*}
   \ee((\overline{S}_n^+/\sqrt{n})^2) 
&= \frac{1}{n} \sum_{m=1}^{n} \sum_{k=1}^{m-1} \frac{1}{k} \ee(S_k^+) \frac{1}{m-k} \ee(S_{m-k}^+) + \frac{1}{n} \sum_{k=1}^{n} \frac{1}{k} \ee((S_k^+)^2) \\ 
&= \frac{1}{n} \sum_{m=1}^{n} \sum_{k=1}^{m-1} \frac{1}{\sqrt{2\pi k}} \frac{1}{\sqrt{2\pi (m-k)}} + \frac{1}{n} \sum_{k=1}^{n} \frac{1}{2} + o(1) \\ 
&= \frac{1}{n} \sum_{m=1}^{n} \frac{1}{2\pi} \int_{0}^{1} \frac{1}{\sqrt{x(1-x)}} \, dx + \frac{1}{n} \sum_{k=1}^{n} \frac{1}{2} + o(1) \\
&= 1 + o(1)
\end{align*}
as $n \to \infty$.
This completes the proof of Proposition~\ref{prop:intlimit}.
\qed
\end{remark*}

\pagebreak[2]
\medskip
\section{Proof of Proposition \ref{prop:local}}

\def\ttp{\tilde{p}}
\def\ttf{\tilde{f}}

\begin{proof}[Proof of Proposition \ref{prop:local}]
Let $p = (1-\varrho) q_1 + \varrho q_2$ be as in \eqref{eq:decomposition-p0}, 
and let $g_1$ and $g_2$ be the Fourier transforms of $q_1$ and $q_2$,
respectively. Then
$$
f^k(t) = \sum_{j=0}^{k} \tbinom{k}{j} (1-\varrho)^{j} \varrho^{k-j} g_1^j(t) g_2^{k-j}(t) \,.
$$
For $k \geq 3$, put
$$
\ttp_k(x) := \sum_{j=3}^{k} \tbinom{k}{j} (1-\varrho)^{j} \varrho^{k-j} (q_1^{*j} * q_2^{*(k-j)})(x)
$$
and
$$
\ttf_k(t) := \sum_{j=3}^{k} \tbinom{k}{j} (1-\varrho)^{j} \varrho^{k-j} g_1^j(t) g_2^{k-j}(t) \,.
$$
Note that $\ttf_n(t)$ is the Fourier transform of $\ttp_n(x)$
and that $\ttp_n(x)$ can be recovered from $\ttf_n(t)$
\linebreak[2] by means of Fourier inversion.
This follows from the fact that $g_1 \in L^2$ 
(being the~Fourier transform of a bounded probability density)
and $g_2 \in L^\infty$ 
(being the~Fourier transform of a probability measure).
%
%

Using our moment assumptions and the fact that $\varrho < \tfrac12$,
it is easy to see for $k \geq 3$ and $t \in \myreal$,
$$
\left| \tfrac{d^j}{dt^j} \big( f^k(t/\sqrt{n}) - \ttf_k(t/\sqrt{n}) \big) \right| = \myo(n^{-j/2} \, 2^{-k}) \,, \qquad j=0,1,2\,.
$$
It therefore follows from \eqref{eq:fk-all} 
that for $3 \leq k \leq n$, $|t| \leq \gamma n^{1/2}$ and $j=0,1,2$,
\begin{align}
\label{eq:ttfk-all} \left|\tfrac{d^j}{dt^j}(\ttf_k(t/\sqrt{n}) - e^{-kt^2/2n})\right| &\leq \delta_k \, (k/n)^{j/2} \, e^{-kt^2/4n} + \myo(n^{-j/2} \, 2^{-k}) \,.
\end{align}
Furthermore, there exist a constant $C_0 > 0$ 
and a constant $\eta \in (0,1)$ such that 
\linebreak for $k \geq 3$ and $|t| \geq \gamma$,
\begin{align}
\label{eq:ttRL-0}
     |\ttf_k(t)|
&\leq \sum_{j=3}^{k} \tbinom{k}{j} (1-\varrho)^{j} \varrho^{k-j} \left|g_1^j(t) g_2^{k-j}(t)\right|
 \leq C_0 \eta^{k-2} |g_1(t)|^2 \,,
\\[+5pt]
      |\ttf_k'(t)|
&\leq \sum_{j=3}^{k} \tbinom{k}{j} (1-\varrho)^{j} \varrho^{k-j} \left|\tfrac{d}{dt} \left[g_1^j(t) g_2^{k-j}(t)\right]\right|
 \leq C_0 k \eta^{k-3} |g_1(t)|^2 \,.
\label{eq:ttRL-1}
\end{align}
This follows from the fact that $g_1$ and $g_2$ also satisfy \eqref{eq:RL} 
(possibly with some modified constant $\eta$)
and that $g_1'$ and $g_2'$ are bounded,
$q_1$ and $q_2$ being probability measures with finite moments.

Recalling \eqref{eq:decomposition-p} and \eqref{eq:qrdef}
and using the non-negative densities $\ttp_k$ introduced above, 
we may write
\begin{align}
\overline{q}_n^*(x) = \sqrt{n} \, \sum_{k=3}^{n} (\ttp_k \ast G_{n-k})(\sqrt{n} \, x) + r_n(x) \,,
\label{eq:decomposition-q}
\end{align}
where the remainder term $r_n(x)$ is given by
\begin{align}
r_n(x) := & \, \sqrt{n} \, \sum_{k=1}^{n} \tbinom{k}{1} (1-\varrho) \ \varrho^{k-1} (q_1 \ \ast q_2^{*(k-1)} \ast G_{n-k})(\sqrt{n} \, x) \nonumber\\
        + & \, \sqrt{n} \, \sum_{k=2}^{n} \tbinom{k}{2} (1-\varrho)^2 \varrho^{k-2} (q_1^{*2} \ast q_2^{*(k-2)} \ast G_{n-k})(\sqrt{n} \, x) \,.
\label{eq:rndef}
\end{align}
The functions $r_n$ are the signed densities 
occurring in Proposition \ref{prop:local}. It~is easy to see 
that $\|r_n\|_1 = \myo(1/\sqrt{n})$ and $\|r_n\|_\infty = \myo(1)$.
Indeed, because $q_1$ and $q_2$ are probability densities,
$q_1$ is bounded and the total variation norm of $G_{n}$ 
is of order $\myo(1/\sqrt{n})$, we~have
$$
\|q_1^{*j} \ast q_2^{*(k-j)} \ast G_{n-k}\|_1
\leq 
\frac{C_1}{\sqrt{n-k+1}} \qquad (j=1,2)
$$ 
and \mbox{\qquad\qquad}
$$
\|q_1^{*j} \ast q_2^{*(k-j)} \ast G_{n-k}\|_\infty
\leq 
\frac{C_1}{\sqrt{n-k+1}} \qquad (j=1,2) \,,
$$
so that the asserted properties of the densities $r_n$ 
follow from the estimate
$$
     \sum_{k=j}^{n} \frac{\tbinom{k}{j} (1-\varrho)^j \varrho^{k-j}}{\sqrt{n-k+1}}
\leq \sum_{k=j}^{n} \frac{k^j \varrho^{k-j}}{\sqrt{n-k+1}}
= \myo(n^{-1/2}) \qquad (j=1,2) \,.
$$
Observe that all the terms in the big sum in \eqref{eq:decomposition-q}
contain the ``factor'' $q_1^{*2}(\sqrt{n} \, x)$ 
and therefore have Fourier transforms in $L^1$. 
Hence, similarly as in \cite{Alesh:77},
using Fourier inversion and \eqref{eq:invtransform},
we obtain the representation, for $x > 0$,
\begin{align*}
   \overline{q}_n^*&(x) - \sqrt{\tfrac{2}{\pi}} \, e^{-x^2/2} - r_n(x) \\
&= \frac{1}{2\pi} \int_\myreal e^{-itx} \bigg( \ttf_n(t/\sqrt{n}) - e^{-t^2/2} \bigg) \, dt \\
&+ \frac{1}{2\pi} \lim_{R \to \infty} \int_{-R}^{+R} e^{-itx} \, \frac{it}{\sqrt{2\pi n}} \bigg( \sum_{k=3}^{n-1} e^{-kt^2/2n} \frac{1}{\sqrt{n-k}} - \int_0^n e^{-ut^2/2n} \frac{du}{\sqrt{n-u}} \bigg) \, dt \\
&+ \frac{1}{2\pi} \int_\myreal e^{-itx} \bigg( \sum_{k=3}^{n-1} \Big( \ttf_k(t/\sqrt{n}) - e^{-kt^2/2n} \Big) \overline\varphi_{n-k}(t/\sqrt{n}) \bigg) \, dt \\
&+ \frac{1}{2\pi} \int_\myreal e^{-itx} \bigg( \sum_{k=3}^{n-1} e^{-kt^2/2n} \Big( \overline\varphi_{n-k}(t/\sqrt{n}) - (-\overline{a}_{n-k}) \, it/\sqrt{n} \Big) \bigg) \, dt \\
&+ \frac{1}{2\pi} \int_\myreal e^{-itx} \bigg( \sum_{k=3}^{n-1} e^{-kt^2/2n} \Big( (-\overline{a}_{n-k}) - \frac{1}{\sqrt{2\pi(n\!-\!k)}} \Big) \, it/\sqrt{n} \bigg) \, dt \,.
\end{align*}
\linebreak Denote the integrals on the right-hand side by $I_1,\hdots,I_5$.
Note that all the~integrals implicitly depend on $n$ and $x$.
We will consider each of them separately. 

\smallskip

\emph{Convention:}
We always assume that $n \geq 4$ and 
$x \in (0,\infty)$ (part (a)) or $x \in (0,e^{-1})$ (part (b)).
$\myo$- and $o$-\linebreak[2]bounds hold uniformly in these regions
(unless otherwise mentioned), and they may depend on the constants 
$\gamma,\delta_1,\delta_2,\delta_3,\dots$ introduced in Section~5,
on the constants $C_0$ and $\eta$ in \eqref{eq:ttRL-0} and \eqref{eq:ttRL-1},
and on the $L^2$-norm of the function $g_1$.

\medskip
\subsection{The proof of part (a)} 
Throughout this subsection we assume that $n \geq 4$ and $x \in (0,\infty)$.
The proof is very similar to that of Theorem~1 in \cite{Alesh:77}.

\medskip
\myparagraph{On the Integral $\pmb{I_1}$}
Using integration by parts, we~get
\begin{align*}
|I_1| &= \frac1x \left| \int_\myreal e^{-itx} \frac{d}{dt} \Big[ \ttf_n(t/\sqrt{n}) - e^{-t^2/2} \Big] \, dt \right| \\
&\leq \frac1x \int_{(-\gamma\sqrt{n},\gamma\sqrt{n})} \left| \frac{d}{dt} \Big[ \ttf_n(t/\sqrt{n}) - e^{-t^2/2} \Big] \right| \, dt \\
&+    \frac1x \int_{(-\gamma\sqrt{n},\gamma\sqrt{n})^c} \left| \frac{d}{dt} \Big[ \ttf_n(t/\sqrt{n}) - e^{-t^2/2} \Big] \right| \, dt \,.
\end{align*}
By \eqref{eq:ttfk-all}, the first integral on the right
is of the order $\myo(\delta_n + 2^{-n}) = o(1)$.
Furthermore, by \eqref{eq:ttRL-1}, \eqref{eq:gtb-1} and the fact 
that $g_1 \in L^2$, the second integral on the~right 
is of the order
\begin{multline*}
  \myo\bigg( \int_{(-\gamma\sqrt{n},\gamma\sqrt{n})^c} \Big( n \eta^{n-3} | g_1(t/\sqrt{n}) |^2 (1/\sqrt{n}) + |t| e^{-t^2/2} \Big) \, dt \bigg) \\
= \myo(n \eta^{n-3} + e^{-n\gamma^2/2} )
= o(1) \,.
\end{multline*}
Thus, $I_1 = o(1/x)$.

\medskip
\myparagraph{On the Integral $\pmb{I_2}$}
By \cite[Equation (24)]{Alesh:77},
we have $I_2 = \myo(1/(\sqrt{n} x))$.

\medskip
\myparagraph{On the Integral $\pmb{I_3}$}
For $k = 3,\hdots,n-1$, let
$$
I_{3,k} := \int_\myreal e^{-itx} \Big( \ttf_k(t/\sqrt{n}) - e^{-kt^2/2n} \Big) \overline\varphi_{n-k}(t/\sqrt{n}) \, dt \,.
$$
Then, similarly as in \cite{Alesh:77}, it follows via integration by parts that
\begin{multline*}
|I_{3,k}| = \frac{1}{x} \left| \int_\myreal e^{-itx} \frac{d}{dt} \bigg[ \Big( \ttf_k(t/\sqrt{n}) - e^{-kt^2/2n} \Big) \overline\varphi_{n-k}(t/\sqrt{n}) \bigg] \, dt \right| \\[+3pt] 
\leq x^{-1} |I_{3,k,1}| + x^{-1} |I_{3,k,2}| \,,
\end{multline*}
where $I_{3,k,1}$ and $I_{3,k,2}$ denote the integrals over the sets
$(-\gamma\sqrt{n},\gamma\sqrt{n})$ and \linebreak $(-\gamma\sqrt{n},\gamma\sqrt{n})^c$,
respectively.
It follows from \eqref{eq:ttfk-all}, \eqref{eq:phi-1} -- \eqref{eq:phi-3},
\eqref{eq:fzero} and \eqref{eq:ab} that
\begin{align*}
     |I_{3,k,1}|
\leq &\int_{(-\gamma\sqrt{n},\gamma\sqrt{n})} \Big|\ttf_k(t/\sqrt{n}) - e^{-kt^2/2n}\Big| \Big|\overline\varphi_{n-k}'(t/\sqrt{n}) (1/\sqrt{n})\Big| \, dt \\
   + &\int_{(-\gamma\sqrt{n},\gamma\sqrt{n})} \Big|\tfrac{d}{dt}\Big[\ttf_k(t/\sqrt{n}) - e^{-kt^2/2n}\Big]\Big| \Big|\overline\varphi_{n-k}(t/\sqrt{n})\Big| \, dt \\
   = &\myo\left( \int_{(-\gamma\sqrt{n},\gamma\sqrt{n})} \bigg[ \frac{\delta_k \, e^{-kt^2/4n}}{\sqrt{n(n-k)}} + \frac{2^{-k}}{\sqrt{n(n-k)}} \bigg] \, dt \right) \\
   + &\myo\left( \int_{(-\gamma\sqrt{n},\gamma\sqrt{n})} \bigg[ \frac{\delta_k (k/n)^{1/2} |t| \, e^{-kt^2/4n}}{\sqrt{n(n-k)}} + \frac{2^{-k}}{\sqrt{n(n-k)}} \bigg] \, dt \right) \\
   = &\myo\left( \frac{\delta_k}{\sqrt{k(n-k)}} + \frac{2^{-k}}{\sqrt{n-k}} \right) \,.
\end{align*}
Also, using \eqref{eq:phi-1}, \eqref{eq:phi-3}, 
\eqref{eq:fzero}, \eqref{eq:ab},
\eqref{eq:ttRL-0} and \eqref{eq:ttRL-1}, 
the Gaussian tail estimates \mbox{\eqref{eq:gtb-0} -- \eqref{eq:gtb-2}} 
and the~fact that $g_1 \in L^2$, we~get
\begin{align*}
     |I_{3,k,2}|
\leq &\int_{(-\gamma\sqrt{n},\gamma\sqrt{n})^c} |\ttf_k(t/\sqrt{n})| |\overline\varphi_{n-k}'(t/\sqrt{n}) (1/\sqrt{n})| \, dt \\
   + &\int_{(-\gamma\sqrt{n},\gamma\sqrt{n})^c} |\ttf_k'(t/\sqrt{n}) (1/\sqrt{n})| |\overline\varphi_{n-k}(t/\sqrt{n})| \, dt \\
   + &\int_{(-\gamma\sqrt{n},\gamma\sqrt{n})^c} |e^{-kt^2/2n}| |\overline\varphi_{n-k}'(t/\sqrt{n}) (1/\sqrt{n})| \, dt \\
   + &\int_{(-\gamma\sqrt{n},\gamma\sqrt{n})^c} |e^{-kt^2/2n} (kt/n)| |\overline\varphi_{n-k}(t/\sqrt{n})| \, dt \\
   = &\myo\left(\frac{ \eta^{k-2} + k\eta^{k-3} + \tfrac{1}{k\gamma} e^{-k\gamma^2/2} + e^{-k\gamma^2/2} }{\sqrt{n-k}}\right) \,.
\end{align*}
Therefore,
\begin{align}
\label{eq:I30}
I_{3,k} = \myo\left( \frac{1}{x} \frac{\delta_k + \widetilde\eta^k}{\sqrt{k(n-k)}} \right) \,,
\end{align}
where $\widetilde\eta := \tfrac12 (1 + \max \{ \tfrac12, \eta, e^{-\gamma^2/2} \}) \in (0,1)$.
Hence, using \eqref{eq:betasum-2}, we get \mbox{$I_{3} = o(1/x)$}.

\pagebreak[2]

\medskip
\myparagraph{On the Integral $\pmb{I_4}$}
It follows from \cite[Equation (47)]{Alesh:77} that $I_4 = o(1/x)$.

For the convenience of the reader, let us briefly sketch the argument from \cite{Alesh:77}.
For $k = 3,\hdots,n-1$, let
$$
I_{4,k} := \int_\myreal e^{-itx} e^{-kt^2/2n} \Big( \overline\varphi_{n-k}(t/\sqrt{n}) - (-\overline{a}_{n-k}) \, it/\sqrt{n} \Big) \, dt \,.
$$
Using integration by parts, we get
\begin{multline*}
     |I_{4,k}| 
\leq \frac{1}{x} \int_{\myreal} e^{-kt^2/2n} \Big( \tfrac{k}{n}|t| \, \Big| \overline\varphi_{n-k}(t/\sqrt{n}) - (-\overline{a}_{n-k}) \, it/\sqrt{n} \Big| \\ + \Big| \overline\varphi'_{n-k}(t/\sqrt{n}) (1/\sqrt{n}) - (-\overline{a}_{n-k}) \, i/\sqrt{n} \Big| \Big) \, dt \,.
\end{multline*}
We now split the integral at $\pm A$ ($A > 2$) 
and use 
the bounds \eqref{eq:phi-4} and \eqref{eq:phi-5}
in the region $(-A,+A)$ 
and 
the bounds \eqref{eq:phi-2} and \eqref{eq:phi-3}
in the region $(-A,+A)^c$.
In~combination with the Gaussian tail estimates \eqref{eq:gtb-0} and \eqref{eq:gtb-2}, 
we obtain
\begin{align*}
&\eskip \int_{\myreal} e^{-kt^2/2n} \Big( \tfrac{k}{n}|t| \, \Big| \overline\varphi_{n-k}(t/\sqrt{n}) - (-\overline{a}_{n-k}) \, it/\sqrt{n} \Big| \\&\qquad \,+\, \Big| \overline\varphi'_{n-k}(t/\sqrt{n}) (1/\sqrt{n}) - (-\overline{a}_{n-k}) \, i/\sqrt{n} \Big| \Big) \, dt \\
&\leq \int_{(-A,A)} |\overline{b}_{n-k}| e^{-kt^2/2n} \Big( \tfrac{k}{n} |t| \, t^2/n + |t|/n \Big) \, dt 
    \\&\qquad \,+\, \int_{(-A,A)^c} 2 |\overline{a}_{n-k}| e^{-kt^2/2n} \Big( \tfrac{k}{n} |t| \, |t|/\sqrt{n} + 1/\sqrt{n} \Big) \, dt \\
&   = \myo \left( A \frac{|\overline{b}_{n-k}|}{\sqrt{n} \sqrt{k}} \right) \\&\qquad \,+\, \myo \left( \frac{|\overline{a}_{n-k}|}{\sqrt{n}} \bigg[ \sqrt{\frac{n\pi}{2k}} \wedge \frac{n}{kA} e^{-kA^2/2n} + \sqrt{\frac{n\pi}{2k}} \wedge \frac{n}{kA} \left( \frac{k}{n} A^2 + 1 \right) e^{-kA^2/2n} \bigg] \right) \,,
\end{align*}
with implicit constants not depending on $n$ or $A$.
Note that the term in the square brackets is bounded 
by $\sqrt{2\pi n/k}$ for $k \leq n/A$ and 
by $(A+2) e^{-A/2}$ for $k \geq n/A$. 
Thus, using \eqref{eq:ab}, it follows that
\begin{align*}
   |I_4| 
&= \myo\left(\frac{A}{x} \sum_{k=3}^{n-1} \frac{|\overline{b}_{n-k}|}{\sqrt{n} \sqrt{k}}\right) \\&\qquad\,+\, \myo\left(\frac{1}{x} \sum_{k \leq n/A} \frac{1}{\sqrt{k(n-k)}} \right)  + \myo\left( \frac{(A+2) e^{-A/2}}{x} \sum_{k \geq n/A} \frac{1}{\sqrt{n(n-k)}} \right) \\
&= o(x^{-1} \, A) + \myo(x^{-1} \, A^{-1/2}) + \myo(x^{-1} \, (A+2) e^{-A/2}) \,.
\end{align*}
Letting $A \equiv A_n \to \infty$ sufficiently slowly as $n \to \infty$, 
we conclude that $|I_4| = o(1/x)$. 

\medskip
\myparagraph{On the Integral $\pmb{I_5}$}
It is shown in \cite[Equation (48)]{Alesh:77} that $I_5 = o(1/x)$.

\medskip

Clearly, combining the estimates for $I_1,\hdots,I_5$,
we get part (a) of Proposition~\ref{prop:local}.

\pagebreak[4]  

\medskip
\subsection{The proof of part (b)} 
Throughout this subsection we assume that $n \geq 4$ and $x \in (0,e^{-1})$.
For these values of $x$, we can obtain somewhat better estimates
by avoiding the integration-by-parts step.

\medskip
\myparagraph{On the Integral $\pmb{I_1}$}
We have
\begin{align*}
|I_1| &= \left| \int_\myreal e^{-itx} \Big[ \ttf_n(t/\sqrt{n}) - e^{-t^2/2} \Big] \, dt \right| \\
&\leq \int_{(-\gamma\sqrt{n},\gamma\sqrt{n})} \Big| \ttf_n(t/\sqrt{n}) - e^{-t^2/2} \Big| \, dt \\
&+    \int_{(-\gamma\sqrt{n},\gamma\sqrt{n})^c} \Big| \ttf_n(t/\sqrt{n}) - e^{-t^2/2} \Big| \, dt \,.
\end{align*}
By \eqref{eq:ttfk-all}, the first integral on the right 
is of the order $\myo(\delta_n + \sqrt{n} \, 2^{-n}) = o(1)$.
Furthermore, by \eqref{eq:ttRL-0}, \eqref{eq:gtb-0} and the fact
that $g_1 \in L^2$, the second integral on the~right 
is of the order
\begin{multline*}
  \myo\left( \int_{(-\gamma\sqrt{n},\gamma\sqrt{n})^c} \Big( \eta^{n-2} \, |g_1(t/\sqrt{n})|^2 + e^{-t^2/2} \Big) \, dt \right) \\
= \myo(\sqrt{n} \eta^{n-2} + \tfrac{1}{\sqrt{n}} e^{-n\gamma^2/2} )
= o(1) \,.
\end{multline*}
Thus, $I_1 = o(1)$.

\medskip
\myparagraph{On the Integral $\pmb{I_2}$}
We have already mentioned that $I_2 = \myo(1/(\sqrt{n} x))$.
Now, using \eqref{eq:GI} and \eqref{eq:invtransform},
we also have
\begin{align*}
|I_2|
&=
\left| \lim_{R \to \infty} \int_{-R}^{+R} e^{-itx} \, \frac{it}{\sqrt{2\pi n}} \bigg( \sum_{k=3}^{n-1} e^{-kt^2/2n} \frac{1}{\sqrt{n-k}} - \int_0^n e^{-ut^2/2n} \frac{du}{\sqrt{n-u}} \bigg) \, dt \right| \\
& \leq \sum_{k=3}^{n-1} \int_{-\infty}^{+\infty} \frac{|t|}{\sqrt{2\pi n}} e^{-kt^2/2n} \frac{1}{\sqrt{n-k}} \, dt \\
     &\qquad\qquad\qquad\qquad\qquad\quad\ + \left| \lim_{R \to \infty} \int_{-R}^{+R} e^{-itx} \frac{it}{\sqrt{2\pi n}} \int_0^n e^{-ut^2/2n} \frac{du}{\sqrt{n-u}} \, dt \right| \\
&= \myo\left( \sum_{k=3}^{n-1} \frac{n}{k} \frac{1}{\sqrt{n(n-k)}} \right) + 2\pi \varphi(x) \\
&= \myo\Bigg(\sum_{1 \leq k \leq n/2} \frac{1}{k}\Bigg) + \myo\Bigg(\sum_{n/2 \leq k \leq n-1} \frac{1}{\sqrt{n(n-k)}}\Bigg) + \myo(1)
 = \myo(\log n) \,.
\end{align*}
Thus, $I_2 = \myo((\log n) \,\wedge\, (1/(\sqrt{n}x)))$.

\medskip
\myparagraph{On the Integral $\pmb{I_3}$}
For $k = 3,\hdots,n-1$, we can estimate the integral
$$
I_{3,k} := \int_\myreal e^{-itx} \Big( \ttf_k(t/\sqrt{n}) - e^{-kt^2/2n} \Big) \overline\varphi_{n-k}(t/\sqrt{n}) \, dt \,.
$$
in two different ways.

On the one hand, using integration by parts,
we obtain 
\begin{align}
\label{eq:I31}
I_{3,k} = \myo\left( \frac{1}{x} \frac{1}{\sqrt{k(n-k)}} \right) \,,
\end{align}
see \eqref{eq:I30}.

On the other hand, similar estimates (without integration by parts) 
yield
\begin{align*}
      |I_{3,k}|
&\leq \int_\myreal \Big| \ttf_k(t/\sqrt{n}) - e^{-kt^2/2n} \Big| \Big| \overline\varphi_{n-k}(t/\sqrt{n}) \Big| \, dt \\
&= \myo\left(\int_{(-\gamma\sqrt{n},\gamma\sqrt{n})} \bigg[ \frac{\delta_k \, |t| \, e^{-kt^2/4n}}{\sqrt{n(n-k)}} + \frac{2^{-k}}{\sqrt{n-k}} \bigg] \, dt \right) \\
&\qquad + \myo\left(\int_{(-\gamma\sqrt{n},\gamma\sqrt{n})^c} \Big| \ttf_k(t/\sqrt{n}) \Big| \Big| \overline\varphi_{n-k}(t/\sqrt{n}) \Big| \, dt \right) \\
&\qquad + \myo\left(\int_{(-\gamma\sqrt{n},\gamma\sqrt{n})^c} \Big| e^{-kt^2/2n} \Big| \Big| \overline\varphi_{n-k}(t/\sqrt{n}) \Big| \, dt \right) \\
&= \myo\left(\frac{n}{k} \frac{\delta_k}{\sqrt{n(n-k)}} + 2^{-k} \frac{\sqrt{n}}{\sqrt{n-k}} \right) \\
&\qquad + \myo\left( \eta^{k-2} \frac{\sqrt{n}}{\sqrt{n-k}} \right) + \myo\left( \frac{1}{k\gamma} e^{-k\gamma^2/2} \frac{\sqrt{n}}{\sqrt{n-k}} \right) \,,
\end{align*}
whence
\begin{align}
\label{eq:I32}
I_{3,k} = \myo\left( \frac{n}{k} \frac{1}{\sqrt{n(n-k)}} \right) \,.
\end{align}
Using \eqref{eq:I31} for $k \leq nx^2$ and \eqref{eq:I32} for $k \geq nx^2$
and recalling that $x \in (0,e^{-1})$, it~follows~that
\begin{multline*}
I_3 = \myo\left(\frac{1}{\sqrt{n} x} \sum_{1 \leq k \leq nx^2} \frac{1}{\sqrt{k}} + \sum_{nx^2 \leq k \leq n/2} \frac{1}{k} + \sum_{n/2 \leq k \leq n-1} \frac{1}{\sqrt{n(n-k)}}  \right) \\
    = \myo(1) + \myo(- \log x) + \myo(1)
    = \myo(- \log x) \,.
\end{multline*}
Thus, $I_3 = \myo(- \log x)$.

\medskip
\myparagraph{On the Integral $\pmb{I_4}$}
For $k=3,\hdots,n-1$, we can estimate the integral
$$
I_{4,k} := \int_\myreal e^{-itx} e^{-kt^2/2n} \Big( \overline\varphi_{n-k}(t/\sqrt{n}) - (-\overline{a}_{n-k}) \, it/\sqrt{n} \Big) \, dt \\
$$
in two different ways.
On the one hand, using integration by parts 
and \eqref{eq:phi-2}, \eqref{eq:phi-3} and \eqref{eq:ab},
we~have
$$
|I_{4,k}| \leq \frac{1}{x} \int_\myreal 2 |\overline{a}_{n-k}| e^{-kt^2/2n} \Big( \tfrac{k}{n} |t| \, |t|/\sqrt{n} + 1/\sqrt{n} \Big) \, dt = \myo \left( \frac{1}{x \sqrt{k(n-k)}} \right) \,.
$$
On the other hand, also using \eqref{eq:phi-2} and \eqref{eq:ab} 
(but without integration by parts), we~have
$$
|I_{4,k}| \leq \int_\myreal 2 |\overline{a}_{n-k}| e^{-kt^2/2n} \Big( |t|/\sqrt{n} \Big) \, dt = \myo \Bigg( \frac{n}{k} \frac{1}{\sqrt{n(n-k)}} \Bigg) \,.
$$
Thus, the same argument as for $I_3$ leads to the conclusion
that $I_4 = \myo(- \log x)$.

\medskip
\myparagraph{On the Integral $\pmb{I_5}$}
For $k=3,\hdots,n-1$, we can estimate the integral
$$
I_{5,k} := \int_\myreal e^{-itx} \, e^{-kt^2/2n} \Big( (-\overline{a}_{n-k}) - \frac{1}{\sqrt{2\pi(n\!-\!k)}} \Big) \, it/\sqrt{n} \, dt
$$
in two different ways.
On the one hand, using integration by parts and \eqref{eq:ab}, we~get
$$
  |I_{5,k}| 
= \myo \left( \frac{1}{x} \int_\myreal e^{-kt^2/2n} \Big( \frac{\frac{k}{n} |t| \, |t|}{\sqrt{n(n-k)}} + \frac{1}{\sqrt{n(n-k)}}  \Big) \, dt \right)
= \myo \left( \frac{1}{x} \frac{1}{\sqrt{k(n-k)}} \right) \,.
$$
On the other hand, using \eqref{eq:ab} (but without integration by parts), we get
$$
  |I_{5,k}| 
= \myo \left( \int_\myreal e^{-kt^2/2n} \Big( \frac{|t|}{\sqrt{n(n-k)}}  \Big) \, dt \right)
= \myo \left( \frac{n}{k} \frac{1}{\sqrt{n(n-k)}} \right) \,.
$$
Thus, the same argument as for $I_3$ leads to the conclusion
that $I_5 = \myo(- \log x)$.

\pagebreak[2]
\medskip

The proof of part (b) of Proposition \ref{prop:local} 
is completed by combining the previous estimates.
\end{proof}

\pagebreak[2]
\medskip
\section{Proof of Necessity in Theorem \ref{thm:ECLT}}

\begin{proof}[Proof of Necessity in Theorem \ref{thm:ECLT}]
Let us quote some well-known results from the literature: 
Suppose that $|s| < 1$.
By Spitzer's formula (see \eg \cite[p.\,618]{Feller-2}), 
we~have
\begin{align}
   \sum_{n=0}^{\infty} s^n \ee(e^{it\overline{S}_n^{+}}) 
&= \frac{1}{1-s} \exp \left( \sum_{k=1}^{\infty} \frac{s^k}{k} \int_0^\infty (e^{itx}-1) \, dF_k(x) \right)
\label{eq:spitzer-1}
\end{align}
for any $t \in \myreal$.
Also (see \eg \cite[p.\,416]{Feller-2}), we have
\begin{align*}
   1 + \sum_{n=1}^{\infty} s^n \pp(\overline{S}_n < 0)
= \exp \left( \sum_{k=1}^{\infty} \frac{s^k}{k} \pp(S_k < 0) \right)
= \frac{1}{1-s} \exp \left( - \sum_{k=1}^{\infty} \frac{s^k}{k} \pp(S_k \geq 0) \right) \,.
\end{align*}
Thus, Spitzer's formula \eqref{eq:spitzer-1}
can be rewritten as
\begin{align}
   \sum_{n=0}^{\infty} s^n \ee(e^{it\overline{S}_n^{+}}) 
&= \left( 1 + \sum_{n=1}^{\infty} s^n \pp(\overline{S}_n < 0) \right) \exp \left( \sum_{k=1}^{\infty} \frac{s^k}{k} \int_{[0,\infty)} e^{itx} \, dF_k(x) \right)
\label{eq:spitzer-2}
\end{align}
for any $t \in \myreal$.

Let us note that the preceding results hold 
without any assumptions on moments or on densities.
However, if the moment assumptions 
stated at the beginning of the introduction
are satisfied, then
\begin{align}
\label{eq:anbn-5}
\pp(\overline{S}_n < 0) = \Theta(n^{-1/2}) 
\end{align}
for $n \geq 1$ (see \eg \cite[pp.\,414f]{Feller-2}).
Indeed, more precise information is available.

Expanding the right-hand side of Spitzer's formula \eqref{eq:spitzer-2} 
into a power series in~$s$ and comparing coefficients, 
we~find that for any $n \geq 1$, 
\begin{multline*}
\ee(e^{it\overline{S}_n^{+}}) = \overline{F}_n(0) + \overline{F}_{n-1}(0) \int_0^\infty e^{itx} p_{1,+}(x) \, dx \\
+ \sum_{m=2}^{n} \overline{F}_{n-m}(0) \sum_{l=1}^{\infty} \sum_{\substack{k_1,\hdots,k_l \geq 1: \\ k_1+\hdots+k_l = m}} \frac{1}{l!} \frac{1}{k_1 \cdots k_l} \int_{0}^{\infty} e^{itx} (p_{k_1,+} \ast \hdots \ast p_{k_l,+})(x) \, dx \,,
\end{multline*}
where $\overline{F}_0(0) := 1$ and, for any $k \geq 1$, $p_{k,+}(x) := p_k(x)$ for $x > 0$ and $p_{k,+}(x) := 0$ for $x \leq 0$.
Hence, by the~unique\-ness theorem for Fourier transforms, we have
\begin{align}
\overline{p}_{n}(x) = \overline{F}_{n-1}(0) \, p_{1}(x) + \tilde{p}_n(x)
\label{eq:opn-p1}
\end{align}
for almost all $x > 0$, where $\tilde{p}_n$ is a certain subprobability density
on the positive half-line.

Now suppose that \eqref{eq:ECLT-1} holds.
Then, using Lemma \ref{lemma:scalar}, 
we have $D(\overline{p}_{n}^* \,|\, \varphi_+) < \infty$
for all sufficiently large $n \in \mynat$. 
It is easy to see that this implies 
$D(\overline{p}_{n} \,|\, \varphi_+) < \infty$
for all sufficiently large $n \in \mynat$.
Therefore, using \eqref{eq:opn-p1}, \eqref{eq:anbn-5} 
and the remark \eqref{eq:finiteness}
below Lemma~\ref{lemma:convexity},
we~may conclude that $D(p \,|\, \varphi_+) < \infty$,
which entails \eqref{eq:ECLT-2} by Lemma~\ref{lemma:scalar}.
\end{proof}

\pagebreak[2]
\medskip
\section{Proof of Theorem \ref{thm:totalvariation}}

\begin{proof}[Proof of Theorem \ref{thm:totalvariation}]
Fix $\varepsilon \in (0,1)$, and let $c \in (0,1)$ and $C \in (1,\infty)$
be such~that
\begin{align}
\label{eq:ccc-6}
\int_c^C \varphi_+(x) \, dx > 1 - \varepsilon \,.
\end{align}
Then, using Lemma \ref{lemma:entropy-q}\,(a) and Proposition \ref{prop:local}\,(a),
we have
\begin{align}
\label{eq:ccc-7}
     \int_{c}^{C} \left| \overline{p}_n^* - \varphi_+ \right| \, dx
\leq \int_{c}^{C} \left| \overline{p}_n^* - \overline{q}_n^* \right| \, dx
   + \int_{c}^{C} \left| \overline{q}_n^* - \varphi_+ \right| \, dx
   = o(1)
\end{align}
as $n \to \infty$,
which implies that
\begin{align}
\label{eq:ccc-8}
\int_c^C \overline{p}_n^*(x) \, dx > 1 - \varepsilon
\end{align}
for all sufficiently large $n \in \mynat$.
It follows from \eqref{eq:ccc-6} -- \eqref{eq:ccc-8} 
that
$$
     d_{TV}(\overline{S}_n/\sqrt{n},|Z|) 
\leq \int_{\myreal} |\overline{p}_n^* - \varphi_+| \, dx
\leq \int_{(c,C)} |\overline{p}_n^* - \varphi_+| \, dx
+ \int_{(c,C)^c} (\overline{p}_n^* + \varphi_{+}) \, dx
< 2 \varepsilon
$$
for all sufficiently large $n \in \mynat$.
Since $\varepsilon \in (0,1)$ is arbitrary, 
Theorem \ref{thm:totalvariation} is~proved.
\end{proof}

\medskip

\renewcommand{\refname}{References}


\begin{thebibliography}{???????}

\bibitem[Al1]{Alesh:73}
Aleshkyavichene, A.,K. (1973):
Local theorems for the maximum of sums 
    of independent identically distributed random variables.
\emph{Litovsk. Mat. Sb.} \textbf{13}, no. 4, 5--21.

\bibitem[Al2]{Alesh:77}
Aleshkyavichene, A.\,K. (1977):
Local limit theorems for the density function
    of the maximum of sums of independent random variables.
\emph{Theory Prob. Appl.} \textbf{21}, no. 3, 449--469.


\bibitem[ABBN]{ABBN:2004b}
Artstein, S.; Ball, K.\,M.; Barthe, F.; Naor, A. (2004):
On the rate of convergence in the entropic central limit theorem.
\emph{Probab. Theory Related Fields} \textbf{129}, no. 3, 381--390.

\bibitem[Ba]{Ba}
Barron, A.\,R. (1986):
Entropy and the central limit theorem.
\emph{Ann. Prob.} \textbf{14}, no. 1, 336--342.

\bibitem[BR]{BR}
Bhattacharya, R.\,N.; Rao, R.\,R. (1976):
\emph{Normal approximation and asymptotic expansions.}
John Wiley \& Sons, New York.

\bibitem[BCG1]{BCG:2011a1}
Bobkov, S.\,G.; Chistyakov, G.\,P.; G\"otze, F. (2011):
Non-uniform bounds in local limit theorems
    in case of fractional moments I.
\emph{Math. Methods Statist.} \textbf{20}, no. 3, 171--191.

\bibitem[BCG2]{BCG:2011a2}
Bobkov, S.\,G.; Chistyakov, G.\,P.; G\"otze, F. (2011):
Non-uniform bounds in local limit theorems
    in case of fractional moments II.
\emph{Math. Methods Statist.} \textbf{20}, no. 4, 269--287.

\bibitem[BCG3]{BCG:2011b}
Bobkov, S.\,G.; Chistyakov, G.\,P.; G\"otze, F. (2011):
Rate of convergence and Edgeworth-type expansion
	in the entropic central limit theorem.
\emph{Ann. Prob.} \textbf{41}, no. 4, 2479--2512.

\bibitem[BCG4]{BCG:2011c}
Bobkov, S.\,G.; Chistyakov, G.\,P.; G\"otze, F. (2011):
Convergence to stable laws in relative entropy.
Accepted for publication in \emph{J. Theoret. Probab.}\,
Preprint, arxiv:1104.4360

\bibitem[Cs]{Csiszar:1967}
Csisz{\'a}r, I. (1967):
Information-type measures of difference of probability distributions 
	and indirect observations.
\emph{Studia Sci. Math. Hungar.} \textbf{2}, 299--318.

\bibitem[DCT]{DCT:91}
Dembo, A.; Cover, T.\,M.; Thomas, J.\,A. (1991):
Information-theoretic inequalities.
\emph{IEEE Trans. Inform. Theory} \textbf{37}, no. 6, 1501--1518.

\bibitem[EK]{EK}
Erd{\H{o}}s, P.; Kac, M. (1946):
On certain limit theorems of the theory of probability.
\emph{Bull. Amer. Math. Soc.} \textbf{52}, 292--302.

\bibitem[FHT]{Fedotov-Harremoes-Topsoe:2003}
Fedotov, A.\,A.: Harremo{\"e}s, P.; Tops{\o}e, F. (2003):
Refinements of Pinsker's inequality.
\emph{IEEE Trans. Inform. Theory} \textbf{49}, no. 6, 1491--1498.

\bibitem[Fe]{Feller-2}
Feller, W. (1971):
\emph{An introduction to probability theory and its applications, volume II,}
2nd edition.
John Wiley \& Sons, New York.

\bibitem[IL]{Ibragimov-Linnik:1965}
Ibragimov, I.\,A.; Linnik, Yu.\,V. (1965):
\emph{Independent and stationary sequences of random variables.}
Izdat. ``Nauka'', Moscow.

\bibitem[Jo]{Johnson:2004}
Johnson, O. (2004):
\emph{Information theory and the central limit theorem.}
Imperial College Press, London.

\bibitem[JB]{Johnson-Barron:2004}
Johnson, O.; Barron, A. (2004):
Fisher information inequalities and the central limit theorem.
\emph{Probab. Theory Related Fields} \textbf{129}, no. 3, 391--409.

\bibitem[Ka]{Kallenberg:2002}
Kallenberg, O. (2002):
\emph{Foundations of Modern Probability.} 2nd Edition.
Springer-Verlag, New~York. 

\bibitem[KHJ]{KHJ:2005}
Kontoyiannis, I.; Harremo\"es, P.; Johnson, O. (2005):
Entropy and the law of small numbers.
\emph{IEEE Trans. Inform. Theory} \textbf{51}, no. 2, 466--472.

\bibitem[Ku]{Kullback:1967}
Kullback, S. (1967):
A lower bound for discrimination in terms of variation.
\emph{IEEE Trans. Inform. Theory} \textbf{T-13}, 126--127.

\bibitem[N1]{Nagaev:69} 
Nagaev, S.\,V. (1969):
An estimate of the rate of convergence of the distribution 
    of the maximum of the sums of independent random variables.
\emph{Siberian Math. J.} \textbf{10}, 443--458.

\bibitem[N2]{Nagaev:70}
Nagaev, S.\,V. (1970):
On the speed of convergence of the distribution
    of the maximum sums of independent random variables.
\emph{Theory Prob. Appl.} \textbf{15}, 309--314.

\bibitem[N3]{Nagaev:70a} 
Nagaev, S.\,V. (1970):
On the speed of convergence in a boundary problem I.
\emph{Theory Prob. Appl.} \textbf{15}, 163--186.


\bibitem[NE]{Nagaev-Eppel:76}
Nagaev, S.\,V.; Eppel, M.S. (1976):
On a local limit theorem for the maximum of sums 
	of independent random variables.
\emph{Theory Probab. Appl.} \textbf{21}, no. 2, 384--385.

\bibitem[Na]{Naud:77}
Naudziuniene, V.\,V. (1977):
Nonuniform estimates of convergence rate in local limit theorems
    for densities of the maximum of sums of independent random variables.
\emph{Lith. Math. J.} \textbf{17}, no. 2, 244--258.

\bibitem[Pe]{Pe}
Petrov, V.\,V. (1975):
\emph{Sums of independent random variables.}
Springer-Verlag, New York.

\bibitem[Pi]{Pinsker:1964}
Pinsker, M.\,S. (1964):
\emph{Information and information stability of random variables and processes.}
Holden-Day Inc., San Francisco, Calif.

\bibitem[SM]{Sirazdinov-Mamatov:1962}
Sirazhdinov, S.\,H.; Mamatov, M. (1962):
On mean convergence for densities. 
\emph{Theory Prob. Appl.} \textbf{7}, no. 4, 433--437.

\bibitem[Wa]{Wachtel:2012}
Wachtel, V. (2012):
Local limit theorem for the maximum of asymptotically stable random walks.
\emph{Probab. Theory Related Fields} \textbf{152}, no. 3, 407--424.


\end{thebibliography}
\end{document}